\newtheorem{theorem}{Theorem}[section]
\newtheorem{proposition}[theorem]{Proposition}
\newtheorem{lemma}[theorem]{Lemma}
\newtheorem{corollary}[theorem]{Corollary}
\theoremstyle{definition}
\newtheorem{definition}[theorem]{Definition}
\newtheorem{remark}[theorem]{Remark}
\newtheorem{problem}[theorem]{Problem}
\newtheorem{example}[theorem]{Example}
\numberwithin{equation}{section}
\numberwithin{figure}{section}
\newcommand\Acal{\mathcal{A}}
\newcommand\Fcal{\mathcal{F}}
\newcommand\Kcal{\mathcal{K}}
\newcommand\Mcal{\mathcal{M}}
\newcommand\Pcal{\mathcal{P}}
\newcommand\Tcal{\mathcal{T}}
\newcommand\bp{\mathbf{p}}
\newcommand\bq{\mathbf{q}}
\newcommand\bu{\mathbf{u}}
\newcommand\bv{\mathbf{v}}
\newcommand\bx{\mathbf{x}}
\newcommand\by{\mathbf{y}}
\newcommand\bw{\mathbf{w}}
\newcommand\bz{\mathbf{z}}
\newcommand\bN{\mathbf{N}}
\newcommand\Cscr{\mathscr{C}}
\newcommand\Fscr{\mathscr{F}}
\newcommand\Oscr{\mathscr{O}}
\newcommand\B{\mathbb{B}}
\newcommand\C{\mathbb{C}}
\newcommand\D{\overline{\mathbb D}}
\newcommand\CP{\mathbb{CP}}
\renewcommand\D{\mathbb D}
\renewcommand\H{\mathbb{H}}
\newcommand\N{\mathbb{N}}
\newcommand\R{\mathbb{R}}
\newcommand\T{\mathbb{T}}
\newcommand\igot{\mathfrak{i}}
\renewcommand\igot{\mathfrak{i}}
\newcommand\E{\mathrm{e}}
\renewcommand\imath{\igot}
\newcommand\zero{\mathbf{0}}
\newcommand\wt{\widetilde}
\newcommand\wh{\widehat}
\newcommand\dist{\mathrm{dist}}
\newcommand\tr{\mathrm{tr}}
\newcommand\Hess{\mathrm{Hess}}
\newcommand\Aut{\mathrm{Aut}}
\newcommand\CH{\mathrm{CH}}
\def\dist{\mathrm{dist}}
\newcommand\CK{\mathcal{CK}}
\numberwithin{equation}{section}
\begin{document}
\title{Hyperbolic domains in real Euclidean spaces}
\author{Barbara Drinovec Drnov\v sek and Franc Forstneri{\v c}}

\address{Barbara Drinovec Drnov\v sek, Faculty of Mathematics and Physics, University of Ljubljana, and Institute of Mathematics, Physics, and Mechanics, Jadranska 19, 1000 Ljubljana, Slovenia}
\email{barbara.drinovec@fmf.uni-lj.si}

\address{Franc Forstneri\v c, Faculty of Mathematics and Physics, University of Ljubljana, and Institute of Mathematics, Physics, and Mechanics, Jadranska 19, 1000 Ljubljana, Slovenia}
\email{franc.forstneric@fmf.uni-lj.si}

\thanks{The first named author was supported by the research program P1-0291 and grants J1-9104 and N1-0137, and N1-0237 from ARIS, Republic of Slovenia.
The second named author was supported by the European Union 
(ERC Advanced grant HPDR, 101053085) and the research program P1-0291 
and grants J1-3005, N1-0237, and N1-0237 from ARIS, Republic of Slovenia.} 

\subjclass[2010]{Primary 53A10.  Secondary 32Q45, 30C80, 31A05}
%
%
%
%
%
%

\date{2 December 2023}

\keywords{minimal surface, minimal metric, hyperbolic domain}

\begin{abstract}
The second named author and David Kalaj introduced a pseudometric 
on any domain in the real Euclidean space $\R^n$, $n\ge 3$, 
defined in terms of conformal harmonic discs, by analogy with Kobayashi's pseudometric on 
complex manifolds, which is defined in terms of holomorphic discs. 
They showed that on the unit ball of $\R^n$, this minimal metric coincides with the 
classical Beltrami--Cayley--Klein metric. In the present 
paper we investigate properties of the minimal pseudometric
and give sufficient conditions for a domain to be (complete) hyperbolic, 
meaning that the minimal pseudometric is a (complete) metric. We show in particular that 
a convex domain is complete hyperbolic if and only if it does not contain any affine 2-planes.
One of our main results is that a domain with a negative minimal plurisubharmonic exhaustion 
function is hyperbolic, and a bounded strongly minimally convex domain is complete hyperbolic.
We also prove a localization theorem for the minimal pseudometric.
\end{abstract}

\maketitle

\centerline{\em Dedicated to H.\ Blaine Lawson in honour of his 80th birthday}

\tableofcontents

%
%
%
%
\section{Introduction}\label{sec:intro} 

A conformal structure on a surface is given by a smooth atlas whose charts are related 
by conformal diffeomorphisms of plane domains. A surface endowed with a conformal structure is 
a {\em conformal surface}. An oriented conformal surface is a Riemann surface, and 
a connected nonorientable conformal surface $M$ admits a two-sheeted conformal 
covering $\wt M\to M$ by a Riemann surface.
A conformal surface is {\em hyperbolic} if its universal conformal covering
space is the disc $\D=\{z\in\C:|z|<1\}$. Every hyperbolic surface carries a (Riemannian) 
{\em Poincar\'e metric}, $\Pcal_M$, such that any 
conformal universal covering $h:\D\to M$ is a local isometry from $(\D,\Pcal_\D)$
to $(M,\Pcal_M)$. The Poincar\'e metric on the disc is given by $\Pcal_\D(z,\xi)=\frac{|\xi|}{1-|z|^2}$
$(z\in\D,\ \xi\in\C)$. See \cite[Chapter 1]{AlarconForstnericLopez2021} for more details
on conformal surfaces and maps.

Let $ G_2(\R^n)$ denote the Grassman manifold of $2$-planes in $\R^n$.
On any domain $\Omega\subset \R^n$ $(n\ge 2)$, 
Forstneri\v c and Kalaj \cite{ForstnericKalaj2021} introduced a Finsler pseudometric 
$\Mcal_\Omega:\Omega\times  G_2(\R^n)\to\R_+=[0,+\infty)$, and an associated Finsler pseudometric 
$g_\Omega:\Omega\times\R^n\to \R_+$ and pseudodistance 
$\rho_\Omega:\Omega\times \Omega\to \R_+$ having the following two properties:
\begin{itemize}
\item every conformal harmonic map $M\to \Omega$ from a hyperbolic conformal surface, $M$, 
with its Poincar\'e metric $\Pcal_M$ is metric- and distance-decreasing, and
\item the pseudometric $g_\Omega$ and pseudodistance $\rho_\Omega$ are the largest ones
with this property.
\end{itemize}
Recall that a nonconstant conformal harmonic map $M\to\R^n$ $(n\ge 3)$ parameterizes 
a possibly branched minimal surface in $\R^n$, and every minimal surface 
with isolated singularities arises in this way 
(see \cite{AlarconForstnericLopez2021,Osserman1986,Tromba2012}). 
Hence, the pseudometric $g_\Omega$ and pseudodistance $\rho_\Omega$ describe the fastest 
possible rate of growth of conformal minimal surfaces in $\Omega$. 

The pseudometrics $\Mcal_\Omega$ and $g_\Omega$, and the pseudodistance $\rho_\Omega$
are defined in terms of conformal harmonic discs $\D\to \Omega$, 
by analogy with Kobayashi's definition of his pseudometric on complex manifolds in terms of 
holomorphic discs (see \cite{Kobayashi1967,Kobayashi1976,Kobayashi1998}).
We recall their construction and basic properties in Section \ref{sec:definition}. 
In Section \ref{sec:chains} we show that the pseudodistance 
$\rho_\Omega$ equals the infimum of the Poincar\'e lengths of chains 
of conformal harmonic discs in $\Omega$ connecting a given pair of points, 
in analogy to Kobayashi's definition of his pseudodistance by chains of holomorphic discs.
On domains in $\R^2=\C$, $g_\Omega$ and $\rho_\Omega$ agree 
with the Kobayashi pseudometric and pseudodistance, respectively.

The main part of the paper is devoted to developing geometric sufficient conditions for a 
domain $\Omega\subset\R^n$ $(n\ge 3)$ to be hyperbolic or complete hyperbolic, in the sense that 
$\rho_\Omega$  is a (complete) metric on $\Omega$; see Definition \ref{def:hyperbolic}. 
Note that a hyperbolic domain does not contain any minimal surfaces parameterized by $\C$
(such as a catenoid, a helicoid, the Enneper's surface, among others).
The main task in establishing (complete) hyperbolicity is to obtain suitable lower bounds 
on the minimal pseudometric $g_\Omega$. 

In Section \ref{sec:hyperbolic1} we show that a domain $\Omega\subset \R^n$ is hyperbolic
if and only if the minimal pseudometric $g_\Omega$ is bounded away from zero 
on a neighbourhood of any point, and this holds if and only if the family $\CH(\D,\Omega)$ of conformal
harmonic discs $\D\to\Omega$ is pointwise equicontinuous for some 
metric inducing the topology of $\Omega$ (see Theorem \ref{th:hyperbolic}).
These characterizations are analogous to Royden's results  
\cite[Theorem 2]{Royden1971} on Kobayashi hyperbolicity. 
We also establish a connection between (complete) hyperbolicity and tautness of a domain
$\Omega\subset \R^n$, where the latter condition means that $\CH(\D,\Omega)$ 
is a normal family; see Definition \ref{def:taut}.
The notion of a taut complex manifold was introduced by Wu \cite{Wu1967}, and
the analogous result for the Kobayashi metric is due to Kiernan \cite{Kiernan1970}.

In Section \ref{sec:convex} we show that a hyperbolic convex domain in $\R^n$ 
is also complete hyperbolic, and this holds if and only if the domain 
does not contain any affine $2$-planes, which is an obvious necessary condition
for hyperbolicity. The analogous results for Kobayashi hyperbolicity of convex domains 
in $\C^n$ are due to Barth \cite{Barth1980}, Harris \cite{Harris1979}, 
and Bracci and Saracco \cite{BracciSaracco2009}. Our proof differs
substantially from those in the cited works.

In Section \ref{sec:min-Kob} we show that if the tube $\Omega\times \imath\R^n\subset\C^n$
over a domain $\Omega\subset\R^n$ is Kobayashi (complete) hyperbolic, then $\Omega$ is 
(complete) hyperbolic; the converse fails in general.

In Section \ref{sec:Sibony} we introduce another pseudometric 
$\Fcal_\Omega:\Omega\times  G_2(\R^n)\to \R_+$ on any domain $\Omega\subset\R^n$ $(n\ge 3)$, 
defined by minimal plurisubharmonic functions on $\Omega$ and satisfying 
$\Fcal_\Omega\le \Mcal_\Omega$  (see Definition \ref{def:F} and Proposition \ref{prop:FM}).
Recall that an upper-semicontinuous function $u:\Omega\to[-\infty,+\infty)$ is said to be 
{\em minimal plurisubharmonic}, abbreviated MPSH, if and only if, for any conformal harmonic 
map $f:M\to\Omega$ from a conformal surface $M$, $u\circ f$ is a subharmonic function on $M$. 
We summarize basic properties of such functions in Section \ref{sec:Sibony} and
refer to \cite[Chapter 8]{AlarconForstnericLopez2021} for a more complete treatment. 
(Note that MPSH functions are a special case with $p=2$ of $p$-plurisubharmonic 
functions that were introduced and studied in different geometries by Harvey and Lawson 
\cite{HarveyLawson2011ALM,HarveyLawson2012AM,HarveyLawson2013IUMJ}.)
The pseudometric $\Fcal_\Omega$ is an analogue of the Sibony pseudometric on complex manifolds 
\cite{Sibony1981}, which is defined in terms of usual plurisubharmonic functions.
We show that most results for the Sibony pseudometric, obtained in \cite{Sibony1981},  
have their analogues for $\Fcal_\Omega$. In particular, a domain with a negative MPSH 
exhaustion function is hyperbolic (see Theorem \ref{th:mpsh}) and does not contain any 
conformal minimal surfaces of parabolic conformal type (see Corollary \ref{cor:hyperconvex}). 

In Section \ref{sec:localization} we obtain a localization result for the minimal pseudometric. 
Given a domain $\Omega\subset \R^n$ and a point $\bp\in b\Omega$ at which there exists a local 
negative MPSH function $u$ on $\overline \Omega$ peaking at $\bp$, with $u(\bp)=0$,  
we prove the estimate
\[
	g_\Omega(\bx,\cdotp) \ge \bigl(1-c|\bx-\bp|\bigr) \, g_{\Omega\cap\B(\bp,r_0)}(\bx,\cdotp)
\] 
with a constant $c=c(r_0)>0$, where $\B(\bp,r_0)$ is the open Euclidean ball of radius $r_0$
centred at $\bp$ (see Theorems \ref{th:loc1} and \ref{th:loc2}). 
In particular, the quotient of the two metrics converges to $1$ linearly in $|\bx-\bp|$ as $\bx\to\bp$.
Localization results for biholomorphically invariant metrics on complex manifolds
(such as those of Kobayashi, Sibony, and Azukawa) have a long history; see the papers \cite{Sibony1981,ForstnericRosay1987,Berteloot1994,Gaussier1999,Nikolov2002,IvashkovichRosay2004,DiederichFornaessWold2014,FornaessNikolov2021}, and the list is not exhaustive. 

Localization results are often useful in establishing (complete) hyperbolicity.
In Section \ref{sec:mpsh} we show that every bounded 
{\em strongly minimally convex domain} $\Omega$ in $\R^n$ is complete hyperbolic; 
see Theorem \ref{th:completehyperbolic}. 
(In $\R^3$, such domains are also called {\em strongly mean-convex} since they are
characterized by their boundaries having positive mean curvature at every point. 
Minimally convex domains play an important role in the theory of minimal surfaces, and 
we refer to \cite{DrinovecForstneric2016TAMS} 
and \cite[Chapter 8]{AlarconForstnericLopez2021} for surveys and further references. 
They are a particular case of $p$-convex domains, which were studied by Harvey and Lawson
\cite{HarveyLawson2011ALM,HarveyLawson2012AM,HarveyLawson2013IUMJ}.)
On a strongly minimally convex domain, 
the minimal metric $g_\Omega(\bx,\bv)$ is asymptotic to $|\bv|/\dist(\bx,b\Omega)$ 
in the radial direction and to $|\bv|/\sqrt {\dist(\bx,b\Omega)}$ in the tangential direction. 
Theorem \ref{th:completehyperbolic} is an analogue of the  
result of Graham \cite{Graham1975} that a bounded strongly pseudoconvex domain in 
$\C^n$ is Kobayashi complete hyperbolic; another proof 
using the Sibony metric is indicated in \cite[Proposition 7]{Sibony1981}. 
We could not adapt these proofs to our situation. 
Instead, we follow the approach developed by Ivashkovich and Rosay in \cite{IvashkovichRosay2004}
in the context of the Kobayashi metric on almost complex manifolds. 

In Section \ref{sec:extend} we prove a result on extending conformal harmonic maps
into hyperbolic domains across punctures. Theorem \ref{th:disc} is an analogue of a theorem 
of Kwack \cite{Kwack1969} (1969), which pertains to punctured holomorphic discs in Kobayashi
hyperbolic manifolds.

In Section \ref{sec:problems} we collect several open problems in this newly emerged field.

%
%
%
%
\section{Definition and basic properties of the minimal metric}\label{sec:definition}
To motivate the discussion and set the stage, we begin by recalling the definition of the 
Kobayashi pseudometric on a connected complex manifold $X$. 

We denote by $\Oscr(\D,X)$ the space of holomorphic maps $\D=\{z\in\C:|z|<1\}\to X$. 
Given a point $x\in X$, the Kobayashi pseudonorm of a tangent vector $\xi\in T_x X$ is 
\begin{equation}\label{eq:Kobayashi}
	\Kcal_X(x,\xi) = \inf\bigl\{1/r>0: \exists f\in \Oscr(\D,X),\ f(0)=x,\ f'(0)=r \xi\bigr\}\ge 0.
\end{equation}
The Kobayashi pseudodistance $\dist_X(x,y)\ge 0$ between a pair of points $x,y\in X$ 
is the infimum of the numbers
$
	\int_0^1 \Kcal(\gamma(t),\dot\gamma(t)) \, dt
$
over all piecewise smooth paths $\gamma:[0,1]\to X$ with $\gamma(0)=x$ and $\gamma(1)=y$.
A complex manifold $X$ is said to be (Kobayashi) {\em hyperbolic}
if $\dist_X$ is a metric on $X$, and 
{\em complete hyperbolic} if $(X,\dist_X)$ is a complete metric space.
Hyperbolicity clearly implies that there are no nonconstant holomorphic maps $\C\to X$. 
If $X$ is compact then the converse also holds according to Brody's theorem 
\cite{Brody1978}.
On the disc, the Kobayashi distance coincides with the Poincar\'e distance 
\[ 
	\dist_{\Pcal_\D}(z,w) = \frac12 \log\frac{1+d(z,w)}{1-d(z,w)},\quad
	\text{where}\ d(z,w)=\frac{|z-w|}{|1-z\overline w|}\ \text{for}\ z,w\in\D.
\] 
Any holomorphic map $X\to Y$ between complex manifolds is 
Kobayashi metric- and distance-decreasing, and this is the biggest 
pseudometric having this property which 
coincides on the disc $\D$ with the Poincar\'e metric. 

Suppose now that $\Omega$ is a connected domain in $\R^n$ for some $n\ge 3$.
We recall  from  \cite[Section 6]{ForstnericKalaj2021} the definition of the minimal pseudometric 
$g_\Omega$ and the minimal pseudodistance $\rho_\Omega$.
The point is to follow Kobayashi's idea but using conformal harmonic discs instead of holomorphic discs. 
The same construction applies in any Riemannian manifold.

A {\em conformal frame} in $\R^n$ is a pair of vectors $(\bu,\bv)\in \R^n\times \R^n$ such that 
\[ 
	|\bu|=|\bv|\quad \text{and}\quad \bu\,\cdotp \bv=0.
\] 
The dot stands for the Euclidean inner product on $\R^n$, and $|\bu|=\sqrt{\bu\,\cdot\bu}$ 
is the Euclidean norm of a vector $\bu\in\R^n$. 
Let $D$ be a domain in $\R^2$ with Euclidean coordinates $(x,y)$.
An immersion $f:D\to\R^n$ $(n\ge 2)$ is said to be {\em conformal} if its 
differential $df_p$ at any point $p\in D$ preserves angles. It is elementary 
(cf.\ \cite[Lemma 1.8.4]{AlarconForstnericLopez2021}) that this holds if and only if
\begin{equation}\label{eq:conformal}
	|f_x|=|f_y| \ \ \text{and}\ \ f_x\,\cdotp f_y=0.
\end{equation}
In other words, the partial derivatives $(f_x,f_y)$ are a conformal frame at every point.
A map $f:D\to\R^n$ of class $\Cscr^1$ 
(not necessarily an immersion) is called conformal if 
\eqref{eq:conformal} holds at each point. 
In the same way we define conformal maps $M\to\R^n$ from any
conformal surface, in particular from any Riemann surface.

We denote by $\CH(\D,\Omega)$ the space of (not necessarily immersed) 
conformal harmonic discs $\D\to \Omega$. 

%
%
We now introduce a couple of Finsler pseudometrics on a domain $\Omega\subset\R^n,\ n\ge 3$.
The first one, called the {\em minimal metric} and denoted $g_\Omega$, is defined on the tangent bundle $T\Omega=\Omega\times\R^n$; its value at a point $\bx\in\Omega$ and tangent vector 
$\bv\in\R^n$ is given by
\begin{equation}\label{eq:g}
	g_\Omega(\bx,\bv) = \inf\bigl\{1/r>0: \exists f\in\CH(\D,\Omega),\ f(0)=\bx,\ f_x(0)=r\bv\bigr\}.
\end{equation}
Clearly, $g_\Omega$ is upper-semicontinuous on $\Omega\times \R^n$ and absolutely homogeneous:  
\[
	g_\Omega(\bx,t\bv)=|t| g_\Omega(\bx,\bv)\ \ \text{for}\ \ t\in \R. 
\]
The {\em minimal pseudodistance} $\rho_\Omega:\Omega\times \Omega\to \R_+$ 
is obtained by integrating $g_\Omega$:
\begin{equation}\label{eq:rho}
	\rho_\Omega(\bx,\by)= \inf_\gamma \int_0^1 g_\Omega(\gamma(t),\dot\gamma(t)) \, dt,
	\quad\ \bx,\by\in\Omega.
\end{equation}
The infimum is over all piecewise smooth paths $\gamma:[0,1]\to\Omega$ with $\gamma(0)=\bx$
and $\gamma(1)=\by$. Obviously, $\rho_\Omega$ satisfies the triangle inequality, but it need not
be a distance function. In particular, $\rho_{\R^n}$ vanishes identically.
%
%
For every conformal harmonic disc $f:\D\to\Omega$ we have
\begin{equation}\label{eq:metricdecreasing1}
	g_\Omega(f(z),df_z(\xi)) \le \frac{|\xi|}{1-|z|^2} = 
	\Pcal_\D(z,\xi),\quad z\in \D,\ \xi\in\R^2,
\end{equation}
and $g_\Omega$ is the biggest pseudometric on $\Omega$ with this property.
For $z=0$ this follows from the definition of $g_\Omega$. For an arbitrary point
$z\in \D$ it is obtained by replacing $f$ by the conformal harmonic disc $f\circ\phi$, 
where $\phi\in\Aut(\D)$ is a holomorphic automorphism of the disc interchanging 
$0$ and $z$. (Explicitly, take $\phi(w)=\frac{z-w}{1-\bar z w}$ and note that $|\phi'(0)|=1-|z|^2$.)

It follows that conformal harmonic maps $M\to\Omega$ from any conformal surface 
are distance-decreasing in the Poincar\'e metric on $M$ and the minimal metric on $\Omega$:
\begin{equation}\label{eq:distancedecreasing1}
	\rho_\Omega(f(x),f(x')) \le \dist_{\Pcal_M}(x,x'),\quad\ x,x'\in M;
\end{equation}
furthermore, $\rho_\Omega$ is the biggest pseudodistance on $\Omega$ having 
this property. For the disc $\D$, this follows directly from \eqref{eq:metricdecreasing1}.
For other surfaces $M$, it is seen by passing to the universal conformal covering $\D\to M$
(see \cite[Proposition 6.1]{ForstnericKalaj2021}). If $M$ is not hyperbolic
then both sides of the inequality \eqref{eq:distancedecreasing1} vanish.

A map $R:\R^n\to\R^n$ is said to be a {\em rigid transformation} 
if it is a composition of an orthogonal map, a dilation,  
and a translation. If $R(\Omega)\subset \Omega'$ for a pair of domains in $\R^n$, then
\begin{equation}\label{eq:metricdecreasing2}
	g_{\Omega'}(R(\bx), dR_\bx(\bu)) \le 
	g_\Omega(\bx,\bu),\quad \bx\in\Omega,\ \bu\in\R^n.
\end{equation}
The same holds for any rigid embedding $R:\R^n\to \R^N$ for $3\le n<N$.
In particular, if $R(\Omega)=\Omega'$ then 
$R:(\Omega,g_\Omega)\to (\Omega',g_{\Omega'})$
is an isometry. Rigid transformations are the only self-maps of $\R^n$ taking 
any conformal minimal surface to another such surface.  
This is a marked difference from the Kobayashi pseudometric --- 
any holomorphic map between a pair of complex manifolds takes 
complex curves to complex curves, and hence is distance-decreasing. 
This difference reflects the fact that conformal minimal surfaces 
are much more abundant than holomorphic curves, and the space of such surfaces 
is preserved by a considerably smaller space of maps. This has major consequences when
estimating the minimal metric since  nonrigid coordinate changes are not allowed.

The second Finsler pseudometric is defined on $\Omega \times  G_2(\R^n)$,
where $ G_2(\R^n)$ denotes the Grassmann manifold of $2$-planes in $\R^n$. 
For a point $\bx\in\Omega$ and a $2$-plane $\Lambda\in  G_2(\R^n)$ we set 
\begin{equation}\label{eq:MLambda}
	\Mcal_\Omega(\bx,\Lambda) = 
	\inf \bigl\{ 1/\|df_0\|  : f\in \CH(\D,\Omega),\ f(0)=\bx,\ df_0(\R^2)=\Lambda\bigr\},
\end{equation}
where $\|df_0\|$ denotes the operator norm of the differential $df_0:\R^2\to\R^n$.  
It is immediate that $\Mcal_\Omega$ is upper-semicontinuous.
Comparing with the definition of $g_\Omega$ \eqref{eq:g}
it clearly follows that for any vector $\bv\in \R^n,\ |\bv|=1$ we have that 
\begin{equation}\label{eq:gM}
	g_\Omega(\bx,\bv) = 
	\inf\big\{ \Mcal_\Omega(\bx,\Lambda): \Lambda\in  G_2(\R^n), \bv\in\Lambda\}. 
\end{equation}
The set of $2$-planes $\Lambda\subset\R^n$ containing $\bv$ is parameterized 
by the sphere $S^{n-2}$ in the normal space $\bv^\perp\cong\R^{n-1}$.
This illuminates an important difference from the Kobayashi metric
on a domain in $ \C^n$: a given nonzero vector $ \bv\in\C^n$ determines a 
unique complex line $\C \bv$, and for the Kobayashi metric we only consider 
holomorphic discs tangent to that line. 

In Section \ref{sec:Sibony} we introduce another pseudometric 
$\Fcal_\Omega:\Omega\times  G_2(\R^n)\to \R_+$ defined by minimal 
plurisubharmonic functions on $\Omega$ and satisfying $\Fcal_\Omega\le \Mcal_\Omega$.
This lower bound helps us to establish several hyperbolicity results, a notion which we
now introduce. 

%
%
\begin{definition} \label{def:hyperbolic}
A domain $\Omega$ in $\R^n$ $(n\ge 3)$ is {\em hyperbolic} if $\rho_\Omega$ is a distance function 
on $\Omega$ (i.e., $\rho_\Omega(\bx,\by)>0$ for any pair of distinct points $\bx,\by\in\Omega$), 
and is {\em complete hyperbolic} if $(\Omega,\rho_\Omega)$ is a complete metric space. 
\end{definition}

%
%
\begin{example}[Minimal metric on the ball]\label{ex:ball}
It was shown by Forstneri\v c and Kalaj \cite[Theorem 6.2]{ForstnericKalaj2021} that the metric $g_{\B^n}$ 
on the unit ball $\B^n\subset \R^n$ $(n\ge 3)$ equals the {\em Beltrami--Cayley--Klein metric}
whose value at any point $\bx\in\B^n$ and tangent vector $\bv\in\R^n$ equals
\begin{equation}\label{eq:CK}
	\CK(\bx,\bv)^2 = \frac{(1-|\bx|^2)|\bv|^2 + |\bx\,\cdotp \bv|^2}{(1-|\bx|^2)^2}
	= \frac{|\bv|^2}{1-|\bx|^2} + \frac{|\bx\,\cdotp \bv|^2}{(1-|\bx|^2)^2}. 
\end{equation}
This metric is complete, and it agrees with the restriction of the Kobayashi metric 
(and, up to a multiplicative constant, of the Bergman metric) on the unit ball $\B^n_\C$ in $\C^n$
to points of the real ball $\B^n=\B^{n}_\C\cap \R^n$ and real tangent vectors. 
The extremal conformal minimal discs in $\B^n$ are conformal parameterizations
of affine discs, intersections of $\B^n$ with affine 2-planes.
The operator norm $\|df_z\|$ of the differential of such a disc equals $1$ for every $z\in\D$
in this pair of metrics. However, the extremal discs are not isometries: 
by \cite[Theorem 2.6]{ForstnericKalaj2021} 
the equality holds in \eqref{eq:metricdecreasing1} for some $(z,\xi)\in\D\times (\R^2\setminus\{0\})$ 
if and only if $f$ is a conformal diffeomorphism onto the affine disc $\Sigma=(f(z)+df_z(\R^2))\cap \B^n$ 
and the vector $df_z(\xi)$ is tangent to the diameter of $\Sigma$ through the point $f(z)$.
This differs from the complex case where the extremal holomorphic discs in 
$\B^n_\C$  are the affine complex discs, and their conformal parameterizations are isometries 
in the Poincar\'e metric on $\D$ and the Kobayashi metric on $\B^n_\C$. 

The ball seems to be the only hyperbolic domain in $\R^n$
for which an explicit expression for the minimal metric is known.
Considering the ball $\B^{2n}\subset\R^{2n}$ as a complex ball $\B^n_\C\subset \C^n$,
we have that $g_{\B^{2n}} \le \Kcal_{\B^n_\C}$ but the two metrics are not equal. 
By \cite[Corollary 2.3]{ForstnericKalaj2021} the Kobayashi extremal discs in $\B^n_\C$
are precisely those extremal conformal minimal discs which are complex, namely, 
the proper affine complex discs in $\B^n_\C$.
\qed\end{example}

%
%
By definition, $\rho_\Omega$ is an {\em inner pseudodistance}, 
i.e., the pseudodistance between any pair of points equals the infimum of $g_\Omega$-lengths 
of curves connecting them. Hence, the Hopf--Rinow theorem \cite{HopfRinow1931} 
(see also Jost \cite[Theorem 1.4.8]{Jost2017}) implies the following;
cf.\ Royden \cite[Proposition 7]{Royden1971} and Abate \cite[Proposition 2.3.17]{Abate1989}
for the complex case.

\begin{proposition}\label{prop:HRM}
A hyperbolic domain $\Omega\subset \R^n$ is complete 
hyperbolic if and only if for every point $\bp\in \Omega$ and number $r>0$ the closed 
ball $\{\bx\in \Omega: \rho_\Omega(\bp,\bx)\le r\}$ is compact.
\end{proposition}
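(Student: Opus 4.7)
The plan is to reduce the proposition to the classical Hopf--Rinow theorem for length metric spaces, which states that for a locally compact inner metric space, completeness is equivalent to the compactness of every closed metric ball. The formula \eqref{eq:rho} exhibits $\rho_\Omega$ as the infimum of $g_\Omega$-lengths of piecewise smooth paths, so $\rho_\Omega$ is an inner (length) pseudodistance by construction; under the standing hyperbolicity hypothesis, it is a genuine length metric, so $(\Omega,\rho_\Omega)$ is a length metric space in the sense of \cite{Jost2017}. Hence the only substantive point is to verify that Hopf--Rinow applies, i.e.\ that $(\Omega,\rho_\Omega)$ is locally compact.

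For local compactness I would show that the topology induced by $\rho_\Omega$ coincides with the Euclidean topology on $\Omega$. Upper semicontinuity of $g_\Omega$ (established just after \eqref{eq:g1}) gives, on any compact neighbourhood of a point $\bp\in\Omega$, a bound $g_\Omega\le C$, whence $\rho_\Omega(\bx,\by)\le C|\bx-\by|$ for $\bx,\by$ close to $\bp$ by integrating along the straight segment; so the identity from the Euclidean topology into $(\Omega,\rho_\Omega)$ is continuous. Conversely, by the characterization in Theorem \ref{th:hyperbolic}, on a hyperbolic domain $g_\Omega$ admits a positive lower bound $g_\Omega\ge c>0$ on a Euclidean neighbourhood $U$ of $\bp$. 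A short escape argument shows that any path starting at a point $\bx\in U$ close to $\bp$ and leaving $U$ has $g_\Omega$-length at least some positive constant depending only on $U$; combining this with the bound $c|\dot\gamma|$ on paths staying inside $U$ yields $\rho_\Omega(\bx,\by)\ge c|\bx-\by|$ for $\bx,\by$ sufficiently close to $\bp$. The two topologies therefore agree, and local compactness of $(\Omega,\rho_\Omega)$ is inherited from that of $\R^n$.

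Granted local compactness, Hopf--Rinow yields the forward implication at once: completeness of $(\Omega,\rho_\Omega)$ implies that every closed metric ball is compact. For the reverse implication I would argue directly, independently of Hopf--Rinow: if every closed $\rho_\Omega$-ball is compact and $\{\bx_k\}\subset\Omega$ is $\rho_\Omega$-Cauchy, fix $\bp\in\Omega$; since Cauchy sequences are bounded there is $r>0$ with $\rho_\Omega(\bp,\bx_k)\le r$ for all $k$, so $\{\bx_k\}$ lies in the compact set $\{\bx\in\Omega:\rho_\Omega(\bp,\bx)\le r\}$ and admits a convergent subsequence, which together with the Cauchy property forces convergence of the whole sequence.

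The main obstacle is the local compactness verification: comparing the $\rho_\Omega$-topology to the Euclidean topology requires both the upper semicontinuity of $g_\Omega$ and, crucially, the lower bound provided by hyperbolicity via Theorem \ref{th:hyperbolic}, together with the escape argument needed because paths computing $\rho_\Omega(\bx,\by)$ are not a priori confined to a prescribed neighbourhood. Once this is in hand, the rest of the proof is essentially an invocation of a standard fact about length spaces.
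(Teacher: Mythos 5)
Your proof is correct and takes essentially the same route as the paper: the paper states this proposition as an immediate consequence of the Hopf--Rinow theorem for locally compact inner metric spaces, having just observed that $\rho_\Omega$ is an inner pseudodistance by construction. You simply spell out the local compactness step (that $\rho_\Omega$ induces the Euclidean topology, which in the paper is the content of the equivalence (iv)$\Leftrightarrow$(v) in Theorem~\ref{th:hyperbolic}), and there is no circularity in invoking that theorem since its proof does not use Proposition~\ref{prop:HRM}.
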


%
%
When estimating the minimal pseudometric, one may use the {\em comparison principle}, 
which says that for any triple of domains in $\R^n$ we have that 
\begin{equation}\label{eq:comparison}
	\Omega_0\subset \Omega_1 \ \Longrightarrow \ 
 	g_{\Omega_1} \le g_{\Omega_0}\ \text{holds on $\Omega_0$}.
\end{equation} 
This is obvious from the definition of $g_\Omega$. The corresponding inequalities also
hold for the respective pseudometrics $\Mcal$ and pseudodistances $\rho$
on the two domains. Together with Example \ref{ex:ball} we obtain: 

%
%
\begin{corollary}\label{cor:bounded}
Every bounded domain in $\R^n$ $(n\ge 3)$ is hyperbolic.
\end{corollary}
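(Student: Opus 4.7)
The plan is to reduce to the case of a ball, where the minimal metric is known explicitly, and then apply the comparison principle. Given a bounded domain $\Omega\subset\R^n$, fix $\bp\in\R^n$ and $R>0$ such that $\Omega\subset\B(\bp,R)$. The affine map $\bx\mapsto(\bx-\bp)/R$ is a rigid transformation sending $\B(\bp,R)$ bijectively onto the unit ball $\B^n$, so by the isometry statement following \eqref{eq:metricdecreasing2} it carries the pair $(g_{\B(\bp,R)},\rho_{\B(\bp,R)})$ onto $(g_{\B^n},\rho_{\B^n})$.

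By Example \ref{ex:ball}, $g_{\B^n}$ coincides with the Beltrami--Cayley--Klein metric \eqref{eq:CK}, which is a genuine (in fact complete) metric on $\B^n$; in particular $\rho_{\B^n}(\bx,\by)>0$ for every pair of distinct points. Positivity then transfers via the rigid isometry to $\rho_{\B(\bp,R)}$.

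Finally, the comparison principle \eqref{eq:comparison} applied to $\Omega\subset\B(\bp,R)$ yields $g_{\B(\bp,R)}\le g_\Omega$ on $\Omega\times\R^n$, and integration along piecewise smooth paths in $\Omega$ gives $\rho_{\B(\bp,R)}(\bx,\by)\le \rho_\Omega(\bx,\by)$ for all $\bx,\by\in\Omega$. Hence $\rho_\Omega$ separates points and $\Omega$ is hyperbolic. No essential obstacle arises: the argument only draws on the explicit Cayley--Klein formula on the ball, the rigid invariance of the minimal pseudometric, and the comparison principle, each of which has already been established in the preceding pages.
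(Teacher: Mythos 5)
Your argument is correct and coincides with the paper's own proof: the authors derive the corollary by combining the comparison principle \eqref{eq:comparison} with Example \ref{ex:ball}, exactly as you do. The only addition you make (reducing $\B(\bp,R)$ to $\B^n$ via a translation and dilation) is a harmless explicit instance of the rigid-transformation invariance already noted in Section \ref{sec:definition}.
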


The following example shows that there are bounded domains which are not complete hyperbolic.
A domain $\Omega$ in this example clearly cannot be convex. In fact, we will show that every 
hyperbolic convex domain is also complete hyperbolic; see Theorem \ref{th:convex}.

%
%
\begin{example}\label{ex:notcompletehyperbolic}
Let $\D^*=\D\setminus\{0\}$. 
Assume that $\Omega\subset\R^n$ is a connected domain whose boundary 
is of class $\Cscr^1$ near a point $\bp\in b\Omega$.
If there exists a conformal harmonic disc $f:\D\to\R^n$ such 
that $f(0)=\bp$ and $f(\D^*)\subset \Omega$, then $\bp$ is at finite minimal distance 
from $\Omega$. 

Since the Poincar\'e metric on the punctured disc $\D^*$ 
is complete at the origin (see \cite[p.\ 78]{Kobayashi2005}),
we cannot make this conclusion by using the fact that $f:\D^*\to \Omega$ 
is distance-decreasing. Instead, we proceed as follows.
Let $\bN$ denote the inward unit normal vector to $b\Omega$ at $\bp$. Consider the family of conformal
harmonic discs $f_t(z)=f(z)+t\bN\in \R^n$ for $z\in\D$ and $t\ge 0$. There are constants 
$c>0$ and $0<\delta<1$ such that $f_t(\delta \overline \D)\subset \Omega$ for all $0<t\le c$. 
Replacing $f(z)$ by $f(\delta z)$ we may assume that $\delta=1$. 
Choose a point $a\in\D\setminus \{0\}$, so
$f(a)\in \Omega$. Given $\epsilon\in (0,c)$, we consider the path 
$\gamma_\epsilon:[0,1] \to \Omega$ defined as follows:
\[
	\gamma_\epsilon(t) =
	\begin{cases} f(a)+(c+ 2t(\epsilon-c))\bN,  & 0 \le t\le 1/2;\\
	                       f(2(1-t)a)+\epsilon\bN, & 1/2\le t\le 1.
	\end{cases}
\]
Note that $\gamma_\epsilon(0)=f(a)+c\bN$ and $\gamma_\epsilon(1)=\bp+\epsilon\bN$. 
The minimal length of the first part of $\gamma_\epsilon$ for $0\le t\le 1/2$ 
is smaller than the minimal length of the compact arc $\{f(a)+s\bN:0\le s\le c\}\subset \Omega$, 
which is a finite number $C>0$ independent of $\epsilon$. 
Since a conformal harmonic disc is metric-decreasing \eqref{eq:metricdecreasing1}, 
the minimal length of the second part of $\gamma_\epsilon$ is less than or equal to 
the Poincar\'e distance in $\D$ from $0$ to $a$, which is $\frac12 \log\frac{1+|a|}{1-|a|}$. 
This shows that $\bp$ is contained in the closure of the minimal ball in $\Omega$ of radius
$C+\frac12 \log\frac{1+|a|}{1-|a|}$ centred at $f(a)+c\bN\in \Omega$.
By Proposition \ref{prop:HRM}, $\Omega$ fails to be complete hyperbolic at $\bp$.
\qed\end{example}

By the comparison principle \eqref{eq:comparison}, an inscribed ball in a domain $\Omega\subset\R^n$ 
provides an upper bound on $\Mcal_\Omega$ and $g_\Omega$. 
If $\Omega$ has $\Cscr^2$ boundary then by using a family of inscribed osculating balls, 
we obtain upper bounds on $\Mcal_\Omega$ and $g_\Omega$, which are asymptotically optimal up to 
a multiplicative constant when the point approached the boundary $b\Omega$. 

The main problem when trying to establish (complete) hyperbolicity 
is to obtain suitable lower bounds on $g_\Omega$. By using circumscribed balls, one can show 
that every bounded strongly convex domain $\Omega\subset\R^n$  is complete hyperbolic, 
and asymptotic estimates for the minimal metric $g_\Omega$ follow from the formula \eqref{eq:CK} 
for the minimal metric on the ball (see \cite[Theorem 6.9]{ForstnericKalaj2021}). 
More generally, we show in Lemma \ref{lem:halfspace} that the minimal distance to any affine
hyperplane in $\R^n$ is infinite, which leads to a characterization of (possibly unbounded)
convex domains which are complete hyperbolic; see Theorem \ref{th:convex}. 
However, the comparison principle does not provide good lower bounds on nonconvex domains,
and we shall develop more powerful methods to deal with this issue.

%
%
%
We conclude this section by comparing the minimal metric $g_\Omega$
on a domain $\Omega$ in an even dimensional Euclidean space $\R^{2n}$ $(n\ge 2)$
with the Kobayashi metric $\Kcal_\Omega$ when considering $\Omega$ as a domain in $\C^n$.
Since every holomorphic disc is also conformal harmonic, we see that $\Kcal_\Omega\ge g_\Omega$. 
In particular, if $\Omega$ is minimal (complete) hyperbolic then it is Kobayashi (complete) hyperbolic. 
The converse fails as the following example shows.

\begin{example}
Consider $\D$ as the unit disc in $\R^2$, identified with the real subspace of $\C^2$. 
The domain $\Omega = \D\times\imath\R^2$ is not minimal hyperbolic since it 
contains affine planes, but it is Kobayashi hyperbolic by Barth \cite{Barth1980} 
since it does not contain any affine complex line.
\qed\end{example}

%
%
%
%
\section{The minimal distance is given by chains of conformal harmonic discs}\label{sec:chains}

In this section we show that the minimal pseudodistance $\rho_\Omega$ \eqref{eq:rho} 
can be defined by chains of conformal harmonic discs in $\Omega$. This definition of 
$\rho_\Omega$ was introduced in \cite[Section 6]{ForstnericKalaj2021},
and here we show that it coincides with the integral of the minimal pseudometric 
$g_\Omega$ \eqref{eq:g}.
We follow the spirit of Kobayashi's original definition  in \cite{Kobayashi1967} of his pseudodistance 
on complex manifolds, replacing holomorphic discs by conformal harmonic ones. 
The proof of the desired equality is similar to Royden's proof \cite[Theorem 1]{Royden1971} 
for the Kobayashi pseudometric.  
See also Kruglikov \cite{Kruglikov1999} for the corresponding result on almost complex manifolds.

Let $\Omega$ be a domain in $\R^n$ for $n\ge 3$.
Fix a pair of points $\bx,\by\in \Omega$. Given a finite chain of conformal harmonic 
discs $f_i:\D\to \Omega$ and points $a_i\in \D$ $(i=1,\ldots,k)$ such that 
\begin{equation}\label{eq:chain}
	f_1(0)=\bx,\quad f_{i+1}(0)=f_i(a_i)\ \text{for $i=1,\ldots,k-1$},\quad f_k(a_k)=\by,
\end{equation}
we define the length of the chain to be the number
\begin{equation}\label{eq:lengthofchain}
	\sum_{i=1}^k \dist_{\Pcal_\D}(0,a_i) = \sum_{i=1}^k \frac{1}{2}\log \frac{1+|a_i|}{1-|a_i|} \ge 0.
\end{equation}
We define the pseudodistance 
\begin{equation}\label{eq:tau}
	\tau_\Omega(\bx,\by)\ge 0,\quad \ \bx,\by\in \Omega
\end{equation}
as the infimum of the lengths \eqref{eq:lengthofchain} of all chains 
as in \eqref{eq:chain} (cf.\ \cite[Sect.\ 6]{ForstnericKalaj2021}).
Note that the Kobayashi pseudodistance on a complex manifold $X$ is defined in 
the same way by using chains of holomorphic discs $\D\to X$ (see Kobayashi \cite{Kobayashi1967,Kobayashi1976}).

We denote by $\B(\bp,\delta)$ the open Euclidean ball of radius $\delta>0$ centred at a 
point $\bp\in\R^n$, and by $\overline \B(\bp,\delta)$ its closure. By 
\cite[Theorem 6.2]{ForstnericKalaj2021} we have 
\[
	\tau_{\B^n} = \rho_{\B^n} =\dist_{\CK},
\]
where $\dist_{\CK}$ is the distance function induced by Beltrami--Cayley--Klein metric
\eqref{eq:CK}. On other Euclidean balls we obtain the minimal distance
by translation and dilation of coordinates.
Hence, for every domain $\Omega$, point $\bx_0\in\Omega$, 
and number $\delta>0$ such that 
$\overline \B(\bx_0,\delta)\subset\Omega$ there is a $C>0$ such that 
\begin{equation}\label{eq:tauvseuc}
	\tau_{\Omega}(\bx,\by)\le C |\bx-\by| \ \text{ for every }\bx,\,\by\in \B(\bx_0,\delta).
\end{equation}

%
%
\begin{theorem}\label{th:rhotau}
For any domain $\Omega\subset\R^n$ $(n\ge 3)$ we have that 
\[
	\rho_\Omega = \tau_\Omega,
\]
where 
$\rho_\Omega$ and $\tau_\Omega$ are defined by \eqref{eq:rho} and \eqref{eq:tau},
respectively.
\end{theorem}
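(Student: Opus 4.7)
I will prove the two inequalities $\rho_\Omega \le \tau_\Omega$ and $\tau_\Omega \le \rho_\Omega$ separately.

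For $\rho_\Omega \le \tau_\Omega$: given any chain $(f_i, a_i)_{i=1}^k$ satisfying \eqref{eq:chain}, concatenate the radial traces $\sigma_i \colon [0,1] \to \Omega$, $\sigma_i(t) := f_i(ta_i)$, into a piecewise smooth curve from $\bx$ to $\by$ in $\Omega$. The metric-decreasing property \eqref{eq:metricdecreasing1} bounds the $g_\Omega$-length of $\sigma_i$ by $\int_0^1 |a_i|/(1-t^2|a_i|^2)\,dt = \dist_\Pcal(0,a_i)$, so the total $g_\Omega$-length is at most the chain length \eqref{eq:lengthofchain}. Infimizing over paths and over chains gives the claimed inequality.

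For $\tau_\Omega \le \rho_\Omega$ I will show that for every piecewise smooth path $\gamma \colon [0,1] \to \Omega$ from $\bx$ to $\by$ and every $\epsilon > 0$,
\[
  \tau_\Omega(\bx,\by) \;\le\; \int_0^1 g_\Omega\bigl(\gamma(t),\dot\gamma(t)\bigr)\,dt + \epsilon,
\]
which upon infimizing over $\gamma$ yields $\tau_\Omega \le \rho_\Omega$. Fix an open set $U$ with $\gamma([0,1]) \Subset U \Subset \Omega$; by \eqref{eq:tauvseuc} there is a constant $C > 0$ with $\tau_\Omega(\bx',\by') \le C|\bx'-\by'|$ for nearby points $\bx',\by' \in U$. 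Partition $0 = t_0 < \cdots < t_N = 1$ with mesh $\delta$. At each $\bp_i := \gamma(t_i)$ with velocity $\bv_i := \dot\gamma(t_i)$, use the definition \eqref{eq:g1} of $g_\Omega$ to pick $f_i \in \CH(\D, \Omega)$ with $f_i(0) = \bp_i$ and $(f_i)_x(0) = r_i \bv_i$ satisfying $1/r_i < g_\Omega(\bp_i, \bv_i) + \epsilon$; upper semicontinuity of $g_\Omega$ together with compactness of the graph of $(\gamma,\dot\gamma)$ keeps the $1/r_i$ uniformly bounded. Set $a_i := (t_{i+1}-t_i)/r_i \in (0,1)$ for $\delta$ small, so that a Taylor expansion of $f_i$ at $0$ gives $f_i(a_i) = \gamma(t_{i+1}) + O(\delta^2)$ uniformly in $i$. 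The $f_i$ do not form a chain exactly, but inserting between $f_i(a_i)$ and $\gamma(t_{i+1}) = f_{i+1}(0)$ a correction chain of length at most $C|f_i(a_i) - \gamma(t_{i+1})| = O(\delta^2)$ furnished by \eqref{eq:tauvseuc} produces a bona fide chain from $\bx$ to $\by$.

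The total length of this chain is at most $\sum_i \dist_\Pcal(0,a_i) + N \cdot O(\delta^2)$. Since $\dist_\Pcal(0,a_i) = a_i + O(a_i^3)$ and $\sum_i (t_{i+1}-t_i) = 1$, this is dominated by $\sum_i (t_{i+1}-t_i)(g_\Omega(\bp_i,\bv_i)+\epsilon) + O(\delta) = \sum_i (t_{i+1}-t_i)\,g_\Omega(\bp_i,\bv_i) + \epsilon + O(\delta)$. Because $g_\Omega(\gamma(\cdot),\dot\gamma(\cdot))$ is upper semicontinuous and bounded on $[0,1]$, its Lebesgue integral equals the infimum over partitions of the upper Darboux sums, which dominate $\sum_i (t_{i+1}-t_i)\,g_\Omega(\bp_i,\bv_i)$. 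Refining the partition so that this upper sum is within $\epsilon$ of $\int_0^1 g_\Omega(\gamma,\dot\gamma)\,dt$ and then letting $\delta \to 0$ completes the argument. The main technical obstacle is showing that the accumulated quadratic errors (from both the Taylor expansions of the $f_i$ and from the correction chains) sum to $o(1)$ across a partition with $N = O(1/\delta)$ subintervals; this rests on the uniform lower bound on the $r_i$ coming from upper semicontinuity plus compactness, and on the local Lipschitz estimate \eqref{eq:tauvseuc}, which together force both the corrections and the cubic remainders in $\dist_\Pcal(0,a_i)$ to contribute only $O(\delta)$ in total.
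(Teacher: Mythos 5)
Your proposal follows essentially the same Royden-style route as the paper, and the first direction $\rho_\Omega \le \tau_\Omega$ is proved correctly (you integrate the radial traces using metric-decreasing; the paper instead applies distance-decreasing directly and the triangle inequality — the same content). The gap is in the converse direction. You assert that "a Taylor expansion of $f_i$ at $0$ gives $f_i(a_i) = \gamma(t_{i+1}) + O(\delta^2)$ uniformly in $i$", and you locate the justification in "the uniform lower bound on the $r_i$ coming from upper semicontinuity plus compactness". That lower bound only controls the first-order data — it gives $a_i = O(\delta)$ — but the second-order Taylor remainder of $f_i$ at $0$ depends on $\|D^2 f_i\|$ near the origin, and nothing in the definition \eqref{eq:g1} or in the near-extremality condition $1/r_i < g_\Omega(\bp_i,\bv_i) + \epsilon$ bounds the second derivative of a near-extremal conformal harmonic disc. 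As you refine the partition the discs $f_i$ are re-chosen, and there is no reason for $\max_i\|D^2 f_i\|$ to remain bounded as $\delta \to 0$; if it grows, the total $N\cdot O\bigl(\delta^2 \max_i\|D^2 f_i\|\bigr)$ of the Taylor corrections and the correction chains need not tend to zero. Your own remark at the end — that the uniform bound on $r_i$ together with \eqref{eq:tauvseuc} "force" the quadratic errors to sum to $O(\delta)$ — is therefore not a correct resolution of the obstacle you flag.

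The paper avoids the issue by a different ordering of choices. It first fixes a continuous majorant $h$ with $h(t) > g_\Omega(\gamma(t),\dot\gamma(t))$ and $\int_0^1 h < \rho_\Omega(\bx,\by) + \epsilon$; it then chooses, for each fixed $t$ (independently of any partition), a disc $f \in \CH(\D,\Omega)$ with $1/r < h(t)$ and derives a purely local estimate $\tau_\Omega(\gamma(t),\gamma(t')) \le |t'-t|\,h(t) + O(|t'-t|^2)$, where the $O$-constant is allowed to depend on $t$ through the fixed disc $f$. Because the estimate is local in $t$ with a $t$-dependent threshold $\delta(t)$, the phrase "for all sufficiently fine partitions" in the paper is to be read as a gauge-fine (Cousin-type) tagged partition subordinate to $\delta(\cdot)$, not a mesh-fine one. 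This removes any need to control second derivatives uniformly. To repair your argument you should do the same: fix the near-extremal discs pointwise in $t$ before partitioning, introduce the continuous majorant $h$, and replace the mesh-fine partition with a gauge-subordinate tagged partition.
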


\begin{proof}
We follow Royden's proof of the analogous result for the Kobayashi pseudodistance
(see \cite[Theorem 1, p.\ 130]{Royden1971}). Let $f_i:\D\to \Omega$ and $a_i\in \D$ $(i=1,\ldots,k)$
be as in \eqref{eq:chain}. 
Since conformal harmonic discs $\D\to\Omega$ are distance-decreasing from the Poincar\'e distance  
on $\D$ to the pseudodistance $\rho_\Omega$ (see \eqref{eq:distancedecreasing1}), we have that 
\[
	\rho_\Omega(\bx,\by)\le \sum_{i=1}^k  \rho_\Omega(f_i(0),f_i(a_i))\le 
	\sum_{i=1}^k \dist_\Pcal(0,a_i).
\]
Taking the infimum over all finite chains as in \eqref{eq:chain} 
gives $\rho_\Omega(\bx,\by)\le \tau_\Omega(\bx,\by)$.

To prove the converse inequality, fix $\epsilon >0$ and choose a smooth path 
$\gamma:[0,1]\to\Omega$ with $\gamma(0)=\bx$ and $\gamma(1)=\by$ satisfying
\[
	\int_0^1 g_\Omega(\gamma(t),\dot\gamma(t)) dt <  \rho_\Omega(\bx,\by) + \epsilon.
\]
Since $g_\Omega$ is upper-semicontinuous, there is a continuous function 
$h: [0,1]\to \R$ with $h(t)>g_\Omega(\gamma(t),\dot\gamma(t)) $ for all $t\in[0,1]$ and such that
$\int_0^1 h(t) dt < \rho_\Omega(\bx,\by) + \epsilon$. Then for every sufficiently fine partition 
$0=t_0<t_1<\cdots <t_n=1$ of $[0,1]$ we have that 
\[
	\sum_{i=1}^n h(t_{i-1})(t_i-t_{i-1})<  \rho_\Omega(\bx,\by) + \epsilon.
\]
By (\ref{eq:tauvseuc}) there are constants $C>0$ and $\delta>0$ 
such that 
\begin{equation}\label{eq:tauvseuc2}
	\tau_\Omega(\bx,\by)\le C |\bx-\by| 
	\text{ for every }\bx,\,\by\in \B(\gamma(t),\delta)\ \text{and}\ t\in[0,1].
\end{equation}
Fix $t\in[0,1]$. Since  $h(t)>g_\Omega(\gamma(t),\dot\gamma(t))$, there are a map $f\in \CH(\D,\Omega)$
and a number $r>0$ such that $f(0)=\gamma(t)$, $f_x(0) = r\dot\gamma(t)$, and $h(t)>1/r$.
Then for any $s\in\R$ close to $0$ we have 
$
	f(s/r)=\gamma(t)+\frac{s}{r}f_x(0)+O(s^2)=\gamma(t)+s\dot\gamma(t)+O(s^2), 
$
and hence
\begin{eqnarray*}
	\tau_\Omega(\gamma(t),\gamma(t)+s\dot\gamma(t))&\le& 
	 \tau_\Omega(f(0),f(s/r)) + \tau_\Omega(f(s/r),\gamma(t)+s\dot\gamma(t))  \\
	&\le& \dist_\Pcal(0,s/r) + C|f(s/r) - \gamma(t) - s\dot\gamma(t)| \\
	&\le& |s|/r+O(|s|^2)< |s|\cdot h(t)+O(|s|^2).
\end{eqnarray*}
(We also used that $\dist_\Pcal(0,a)=|a|+O(|a|^2)$ for $a\in\D$ close to $0$.)
Therefore for $t,t'\in[0,1]$ close enough we obtain
\begin{eqnarray*}
\tau_\Omega(\gamma(t),\gamma(t'))&\le&
\tau_\Omega(\gamma(t),\gamma(t)+(t'-t)\dot\gamma(t))+
\tau_\Omega(\gamma(t)+(t'-t)\dot\gamma(t),\gamma(t'))\\
&\le&|t'-t|\cdot h(t)+O(|t'-t|^2),
\end{eqnarray*}
where the second term on the right hand side was estimated by \eqref{eq:tauvseuc2}. 
This implies that for all sufficiently fine partitions $0=t_0<t_1<\cdots <t_n=1$ of $[0,1]$ we have that 
\begin{eqnarray*}
\tau_\Omega(\bx,\by)&\le&\sum_{i=1}^n \tau_\Omega(\gamma(t_{i-1}),\gamma(t_i))\\
&\le&\sum_{i=1}^n (t_{i}-t_{i-1})h(t_{i-1})(1+\epsilon)<(1+\epsilon) (\rho_\Omega(\bx,\by) + \epsilon).
\end{eqnarray*}
Since $\epsilon>0$ was arbitrary, this concludes the proof.
\end{proof}

%
%
%
%
\section{Characterizations of hyperbolic domains}\label{sec:hyperbolic1} 

Recall that $g_\Omega$ denotes the pseudometric \eqref{eq:g} 
on a domain $\Omega\subset\R^n$.  The following concept mimics Royden's notion of hyperbolicity 
in a complex manifold (see \cite[p.\ 133]{Royden1971}).

\begin{definition}\label{def:Rhyperbolic}
A domain $\Omega\subset\R^n$ $(n\ge 3)$ is {\em hyperbolic at a point $\bp\in \Omega$} if 
there are a neighbourhood $U\subset \Omega$ of $\bp$ and a constant $c>0$ such that
\[
	g_\Omega(\bx,\bu)\ge c |\bu|\ \ \text{for all $\bx\in U$ and $\bu\in\R^n$.}
\]
The domain $\Omega$ is {\em R-hyperbolic} if it is hyperbolic at every point.
\end{definition}

Recall (see Kelley \cite{Kelley1975}) that a family $\Fscr$ of mappings of a topological space $X$ 
to a topological space $Y$ is said to be an {\em even family} if, given points $x\in X$ and $y\in Y$ and 
a neighbourhood $U\subset Y$ of $y$, there are a neighbourhood $V\subset X$ of $x$
and a neighbourhood $W\subset U$ of $y$ such that for every $f\in \Fscr$
with $f(x)\in W$ we have that $f(V)\subset U$. 

The following is an analogue of Royden's result \cite[Theorem 2, p.\ 133]{Royden1971}, 
which pertains to the Kobayashi pseudodistance on a complex manifold.

%
%
%
\begin{theorem}\label{th:hyperbolic}
The following conditions are equivalent for a domain $\Omega\subset\R^n$.
\begin{enumerate}[\rm (i)]
\item The family $\CH(\D,\Omega)$ of conformal harmonic discs $\D\to\Omega$ 
is pointwise equicontinuous for some metric $\rho$ on $\Omega$ inducing 
the natural topology.
\item The family $\CH(\D,\Omega)$ is an even family.
\item The domain $\Omega$ is R-hyperbolic (see Definition \ref{def:Rhyperbolic}).
\item The domain $\Omega$ is hyperbolic in the sense of Definition \ref{def:hyperbolic}.
\item The distance function $\rho_\Omega$ induces the topology of $\Omega$.
\end{enumerate}
\end{theorem}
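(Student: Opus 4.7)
The plan is to prove the cycle $(v)\Rightarrow(i)\Rightarrow(ii)\Rightarrow(iii)\Rightarrow(v)$ supplemented by the equivalence $(iv)\Leftrightarrow(v)$. Three implications are essentially free. For $(v)\Rightarrow(iv)$, a pseudodistance that induces a Hausdorff topology must separate points. For $(v)\Rightarrow(i)$, take $\rho=\rho_\Omega$: every $f\in\CH(\D,\Omega)$ is $1$-Lipschitz from $(\D,\dist_\Pcal)$ to $(\Omega,\rho_\Omega)$ by \eqref{eq:distancedecreasing1}, hence uniformly (and pointwise) equicontinuous, and $\rho_\Omega$ induces the natural topology by assumption. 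For $(i)\Rightarrow(ii)$, given $z_0\in\D$, $\by_0\in\Omega$, and a natural neighborhood $U\ni\by_0$, pick $\epsilon>0$ with the $\rho$-ball of radius $2\epsilon$ around $\by_0$ contained in $U$, take $W$ to be the $\rho$-ball of radius $\epsilon$, and use pointwise equicontinuity at $z_0$ to obtain a neighborhood $V$ of $z_0$ with $\rho(f(z),f(z_0))<\epsilon$ for every $f\in\CH(\D,\Omega)$ and $z\in V$. The triangle inequality then delivers $f(z_0)\in W\Rightarrow f(V)\subset U$.

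The core step is $(ii)\Rightarrow(iii)$. Fix $\bp\in\Omega$ and $r_0>0$ with $\overline{\B(\bp,r_0)}\subset\Omega$, and apply the even family property at $(z_0,\by_0)=(0,\bp)$ with $U=\B(\bp,r_0)$. This produces a number $\delta>0$ and a natural neighborhood $W\subset\B(\bp,r_0)$ of $\bp$ such that $f(0)\in W$ forces $f(\{|z|<\delta\})\subset\B(\bp,r_0)$ for every $f\in\CH(\D,\Omega)$. The rescaled disc $\tilde f(z)=f(\delta z)$ belongs to $\CH(\D,\B(\bp,r_0))$ and satisfies $\tilde f_x(0)=\delta f_x(0)$, so the metric-decreasing inequality \eqref{eq:metricdecreasing1} gives
\[
   \delta\, g_{\B(\bp,r_0)}(f(0),f_x(0))\le 1.
\]
The Beltrami--Cayley--Klein formula \eqref{eq:CK} yields $\CK(\by,\bv)\ge |\bv|$ on $\B^n$, which transports via the rigid map $\by\mapsto\bp+r_0\by$ and \eqref{eq:metricdecreasing2} to $g_{\B(\bp,r_0)}(\bx,\bu)\ge |\bu|/r_0$ for every $\bx\in\B(\bp,r_0)$ and $\bu\in\R^n$. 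Combining the two inequalities yields $|f_x(0)|\le r_0/\delta$, and reading \eqref{eq:g1} then produces the Finsler lower bound $g_\Omega(\bx,\bu)\ge(\delta/r_0)|\bu|$ for all $\bx\in W$ and $\bu\in\R^n$, which is R-hyperbolicity at $\bp$.

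For $(iii)\Rightarrow(v)$, combine the upper bound $\rho_\Omega\le C|\cdot|$ on a neighborhood of any given point (from the comparison principle \eqref{eq:comparison} applied to an inscribed ball and Example \ref{ex:ball}) with a matching lower bound: given R-hyperbolicity on $\B(\bp,\epsilon)$ with constant $c>0$, any piecewise smooth path $\gamma$ in $\Omega$ joining two points $\bx,\by\in\B(\bp,\epsilon/4)$ either stays in $\B(\bp,\epsilon)$, in which case its $g_\Omega$-length is at least $c|\bx-\by|$, or else its sub-arc from $\bx$ to the first exit through $\partial\B(\bp,\epsilon)$ has Euclidean length at least $3\epsilon/4$ and hence $g_\Omega$-length at least $3c\epsilon/4\ge c|\bx-\by|$. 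Taking the infimum, $\rho_\Omega(\bx,\by)\ge c|\bx-\by|$ locally, so $\rho_\Omega$ is locally equivalent to the Euclidean distance and induces the natural topology.

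Finally, for $(iv)\Rightarrow(v)$ the upper bound $\rho_\Omega\le C|\cdot|$ locally makes $\rho_\Omega$ continuous on $\Omega\times\Omega$, so $\rho_\Omega$-balls are natural-open. Conversely, fix $\bp\in\Omega$ and $\epsilon>0$ with $\overline{\B(\bp,\epsilon)}\subset\Omega$; by (iv) and continuity the function $\rho_\Omega(\bp,\cdot)$ attains a positive minimum $r>0$ on the compact sphere $\partial\B(\bp,\epsilon)$. Since $\rho_\Omega$ is the inner pseudodistance \eqref{eq:rho}, every piecewise smooth path from $\bp$ to any $\by\notin\B(\bp,\epsilon)$ must cross $\partial\B(\bp,\epsilon)$ at some point $\gamma(t_0)$, and its sub-arc up to $t_0$ has $g_\Omega$-length at least $\rho_\Omega(\bp,\gamma(t_0))\ge r$; taking the infimum gives $\rho_\Omega(\bp,\by)\ge r$, so the natural neighborhood $\B(\bp,\epsilon)$ of $\bp$ contains the $\rho_\Omega$-ball of radius $r$ around $\bp$. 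The main obstacle in the whole argument is the quantitative step $(ii)\Rightarrow(iii)$, where the purely topological even-family condition must be converted into a Finsler metric lower bound by rescaling the disc and inserting the explicit Beltrami--Cayley--Klein estimate on a reference ball.
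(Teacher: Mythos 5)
Your proposal is correct and takes essentially the same approach as the paper, which proves the single cycle $(i)\Rightarrow(ii)\Rightarrow(iii)\Rightarrow(iv)\Rightarrow(v)\Rightarrow(i)$; the core technical step $(ii)\Rightarrow(iii)$ (rescaling a disc via the even-family property and inserting the explicit lower bound $g_{\B(\bp,r_0)}(\bx,\bu)\ge|\bu|/r_0$ from the Beltrami--Cayley--Klein formula) and the arguments for $(iv)\Rightarrow(v)$ and $(v)\Rightarrow(i)$ are the same as in the paper. The only difference is organizational: you replace the paper's terse $(iii)\Rightarrow(iv)$ by an explicit direct $(iii)\Rightarrow(v)$ via a local two-sided comparison of $\rho_\Omega$ with the Euclidean distance, and handle $(iv)\Leftrightarrow(v)$ separately; this is a perfectly valid and arguably slightly more detailed variant of the same argument.
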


%
%
\begin{remark}\label{rem:tighttaught}
Following the terminology introduced for complex manifolds and holomorphic maps $\D\to\ \Omega$
by Wu \cite{Wu1967} and Royden \cite[p.\ 133]{Royden1971},  
a domain $\Omega\subset \R^n$ endowed with a metric $\rho$ inducing the natural 
topology of $\Omega$ is called {\em tight} if condition (i) holds. Hence:
\[
	(\Omega,\rho)\ \text{is tight}\ \Longrightarrow\ \Omega\ \text{is hyperbolic}
	\ \Longrightarrow\ (\Omega,\rho_\Omega)\ \text{is tight}.
\]
\end{remark}

\begin{proof}[Proof of Theorem \ref{th:hyperbolic}]
We follow Royden's proof in \cite[Theorem 2]{Royden1971} 
of the corresponding results for the Kobayashi pseudodistance. Similar arguments were given by 
Kiernan \cite{Kiernan1970}.

Since every pointwise equicontinuous family of maps with respect to some metric on $\Omega$ inducing the 
topology of $\Omega$ is an even family (see Kelley \cite[p.\ 237]{Kelley1975}), (i) implies (ii). 

Assume that (ii) holds. 
Choose $\bx\in\Omega$ and $\epsilon>0$ such that
$\overline \B(\bx,\epsilon)\subset \Omega$. Since $\CH(\D,\Omega)$ is an even family, there are
a number $0<\delta<1$ and a neighbourhood $W\subset \B(\bx,\epsilon)$ of $\bx$ such that for 
any $f\in \CH(\D,\Omega)$ with $f(0)\in W$ we have $f(\delta \D)\subset \B(\bx,\epsilon)$. 
(Here, $\delta\D=\{z=x+\imath y\in\C: |z|<\delta\}$.) Fix a point $\by\in W$ and a vector $\bu\in\R^n$.
Then, for any $f\in\CH(\D,\Omega)$ such that $f(0)=\by$ and $f_x(0)=r\bu$ for some $r>0$, the map $h(z)=f(\delta z)$ $(z\in \D)$ is conformal harmonic, $h$ maps $\D$ into $\B(\bx,\epsilon)$,
$h(0)=\by$, and $h_x(0)=\delta r\bu$. This implies that
$\delta g_{\B(\bx,\epsilon)}(\by,\bu)\le g_\Omega(\by,\bu)$.
Since the metric $g_\B$ on the unit ball $\B\subset\R^n$ 
agrees with the Beltrami--Cayley--Klein metric (see Example \ref{ex:ball}), 
there is a $C>0$ such that $g_{\B(\bx,\epsilon)}(\by,\bu)\ge C|\bu|$ for all $\by\in W$. 
Hence $\Omega$ is R-hyperbolic at $\bx$, which implies (iii).

The implication (iii) $\Rightarrow$ (iv) is an immediate consequence of the definition of
R-hyperbolicity (see Definition \ref{def:Rhyperbolic}) and the definition \eqref{eq:rho}
of the pseudodistance $\rho_\Omega$. 

Assume that (iv) holds. By \eqref{eq:tauvseuc} and Theorem \ref{th:rhotau} the topology induced by 
$\rho_\Omega$ is weaker than the standard topology on $\Omega$. To see that they are equivalent, 
fix a point $\bx\in \Omega$ and a number $\epsilon>0$ such that $\B(\bx,2\epsilon)\subset \Omega$.
By \eqref{eq:tauvseuc} and Theorem \ref{th:rhotau}
the function $\by\mapsto \rho_\Omega(\bx,\by)$ is continuous in the standard topology.
Therefore, there is  a point $\by_0$ with $|\bx-\by_0|=\epsilon$ such that
\[
	\inf\{\rho_\Omega(\bx,\by): |\bx-\by|=\epsilon \}=\rho_\Omega(\bx,\by_0).
\]
By (iv) the number $\delta=\rho_\Omega(\bx,\by_0)$ is positive.
For any $\bz\in \Omega \setminus \B(\bx,\epsilon)$ each smooth path 
$\gamma$ connecting $\bx$ and $\bz$ crosses the boundary $b\B(\bx,\epsilon)$. 
Denote the first such point by $\by$. Then the $\rho_\Omega$-length of $\gamma$ 
is larger than or equal to the length of the restriction of $\gamma$ to the part
connecting $\bx$ and $\by$, which is at least $\delta$. Taking the infimum over all paths connecting 
$\bx$ and $\bz$ we get that $\rho_\Omega(\bx,\bz)\ge \delta$. This gives 
$\{\bz\in\Omega:\rho_\Omega(\bx,\bz)<\delta\}\subset \B(\bx,\epsilon)$, so the topology 
defined by $\rho_\Omega$ is stronger than the standard topology. This implies (v).

Finally, the implication (v) $\Rightarrow$ (i)  is a consequence of the distance-decreasing 
property conformal harmonic discs $\D\to \Omega$, see \eqref{eq:distancedecreasing1}. 
\end{proof}

%
%
\begin{corollary} \label{cor:comparablemetric}
The following are equivalent for a domain $\Omega\subset \R^n$, $n\ge 3$.
\begin{enumerate}[\rm (a)] 
\item The domain $\Omega$ is hyperbolic.
\item The metric $g_\Omega$ is comparable to the Euclidean metric $ds^2=dx_1^2+\cdots+dx_n^2$ 
on any compact subset of $\Omega$.
\item The distance $\rho_\Omega$ is comparable to the Euclidean distance
on any  compact subset of $\Omega$.
\end{enumerate}
\end{corollary}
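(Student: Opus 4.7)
My plan is to establish the cyclic chain (a) $\Rightarrow$ (b) $\Rightarrow$ (c) $\Rightarrow$ (a). The implication (c) $\Rightarrow$ (a) is immediate: for any pair $\bx\ne\by$ in $\Omega$ one may choose a compact set containing both, on which comparability of $\rho_\Omega$ with the Euclidean distance forces $\rho_\Omega(\bx,\by)>0$; hence $\rho_\Omega$ is a genuine distance and $\Omega$ is hyperbolic. The substance lies in the other two implications.

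For (a) $\Rightarrow$ (b), fix a compact $K\subset\Omega$. Hyperbolicity together with Theorem \ref{th:hyperbolic} yields R-hyperbolicity, so every point $\bp\in K$ admits a neighbourhood $U_\bp$ and a constant $c_\bp>0$ with $g_\Omega(\bx,\bu)\ge c_\bp|\bu|$ on $U_\bp\times\R^n$. Passing to a finite subcover of $K$ gives a uniform lower bound $g_\Omega\ge c|\bu|$ on $K$. For the upper bound, set $r_0=\dist(K,b\Omega)>0$; for each $\bp\in K$ the inscribed ball $\B(\bp,r_0)$ lies in $\Omega$, so the comparison principle \eqref{eq:comparison} together with the explicit Beltrami--Cayley--Klein formula \eqref{eq:CK} (Example \ref{ex:ball}) evaluated at the centre yields $g_\Omega(\bp,\bu)\le g_{\B(\bp,r_0)}(\bp,\bu)=|\bu|/r_0$.

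For (b) $\Rightarrow$ (c), choose an open neighbourhood $U$ of $K$ with $\overline U\subset\Omega$ compact, and apply (b) on $\overline U$ to obtain $0<c'\le C'$ with $c'|\bu|\le g_\Omega(\bx,\bu)\le C'|\bu|$ there. Set $d=\dist(K,bU)>0$. Whenever $|\bx-\by|<d$ the Euclidean segment from $\bx$ to $\by$ lies in $U$, so integrating $g_\Omega$ along it gives $\rho_\Omega(\bx,\by)\le C'|\bx-\by|$; for pairs in $K$ with $|\bx-\by|\ge d$, note that $\rho_\Omega$ is continuous on $\Omega\times\Omega$ (using \eqref{eq:tauvseuc} and Theorem \ref{th:rhotau}) and hence bounded on $K\times K$, which converts to a linear upper bound after dividing by $d$. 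For the lower bound, given any path $\gamma:[0,1]\to\Omega$ joining $\bx,\by\in K$, let $t^*$ be its first exit time from $U$; on $[0,t^*]$ the integrand dominates $c'|\dot\gamma|$, so either $\gamma$ stays in $U$ and $\int_0^1 g_\Omega(\gamma,\dot\gamma)\,dt\ge c'|\bx-\by|$, or $\gamma$ exits and the same integral is at least $c'\cdot|\bx-\gamma(t^*)|\ge c'd$. Since $|\bx-\by|\le\diam K$, taking the infimum over paths produces a uniform $C_1>0$ with $\rho_\Omega(\bx,\by)\ge C_1|\bx-\by|$ on $K$.

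The main obstacle is precisely this lower bound in (b) $\Rightarrow$ (c): although $g_\Omega$ is controlled on $\overline U$, the infimum defining $\rho_\Omega$ runs over \emph{all} paths in $\Omega$, which may leave $\overline U$ and wander through regions where the local Euclidean comparison need not hold. The first-exit-time split, combined with the Euclidean safety margin $d$ between $K$ and $bU$ and the finite Euclidean diameter of $K$, is what converts the pointwise Finsler estimate into the integrated distance estimate.
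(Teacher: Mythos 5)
Your proposal is correct and follows essentially the same route as the paper: R-hyperbolicity plus the comparison principle with inscribed balls for (a) $\Rightarrow$ (b), integration along segments and a first-exit-time argument for (b) $\Rightarrow$ (c), and the trivial observation for (c) $\Rightarrow$ (a). The paper's own proof of (b) $\Rightarrow$ (c) is terse (``obvious by integration and using the argument in the proof of (iii) $\Rightarrow$ (iv) in Theorem \ref{th:hyperbolic}''), and the first-exit-time split you spell out is precisely the technique that reference points to.
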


\begin{proof}
Assume that (a) holds. Pick a point $\bp\in \Omega$. Let $B'\Subset B\subset \Omega$ be
a pair of balls centred at $\bp$. Then $g_\Omega\le g_B$ holds on $B$ by the comparison principle. 
Since $g_B$ is a Riemannian metric (see Example \ref{ex:ball}),
it is comparable to the Euclidean metric $ds^2$ on any smaller ball $B'$,
and hence $g_\Omega\le c' ds^2$ on a neighbourhood of $\bp$ for some constant $c'>0$.  (This holds on any domain.)
Since $\Omega$ is hyperbolic, it is $R$-hyperbolic by Theorem \ref{th:hyperbolic}, 
which means that $g_\Omega\ge c ds^2$ on a neighbourhood of $\bp$ 
for some $c>0$ (see Definition \ref{def:Rhyperbolic}). This proves (a) $\Rightarrow$ (b).
The implication (b) $\Rightarrow$ (c) is obvious by integration
and using the argument in the proof of the implication (iii) $\Rightarrow$ (iv) in 
Theorem \ref{th:hyperbolic}. The implication  (c) $\Rightarrow$ (a) is trivial.
\end{proof}

%
%
\begin{corollary}\label{cor:equicontinuous}
Let $\Omega$ be a hyperbolic domain in $\R^n$ $(n\ge 3)$. For every conformal
surface, $M$, the set $\CH(M,\Omega)$ is pointwise equicontinuous, and hence an even family. 
\end{corollary}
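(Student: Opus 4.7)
The plan is to reduce the general case to the disc case already handled by Theorem~\ref{th:hyperbolic}. Since $\Omega$ is hyperbolic, condition (v) of that theorem tells us that $\rho_\Omega$ is a metric inducing the natural topology of $\Omega$, and condition (i) tells us that $\CH(\D,\Omega)$ is pointwise equicontinuous with respect to $\rho_\Omega$. These are the only two inputs I will need.

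To prove pointwise equicontinuity of $\CH(M,\Omega)$ at a given point $\bp\in M$, I would choose a conformal chart $\psi\colon\D\to M$ with $\psi(0)=\bp$ whose image is a neighbourhood of $\bp$; such a chart exists by definition of a conformal surface (passing first to the orientable double cover $\wt M\to M$ recalled in the introduction if $M$ is nonorientable, and then lifting $\bp$ to $\wt M$). For each $f\in\CH(M,\Omega)$ the composition $f\circ\psi\colon\D\to\Omega$ is again conformal harmonic, so $f\circ\psi\in\CH(\D,\Omega)$. Given $\varepsilon>0$, pointwise equicontinuity of $\CH(\D,\Omega)$ at $0$ supplies a $\delta\in(0,1)$ such that
\[
\rho_\Omega\bigl(h(z),h(0)\bigr)<\varepsilon\quad\text{for every}\ h\in\CH(\D,\Omega)\ \text{and every}\ z\in\delta\D.
\]
Taking $U=\psi(\delta\D)$ as the neighbourhood of $\bp$ in $M$ and substituting $h=f\circ\psi$, I obtain $\rho_\Omega(f(x),f(\bp))<\varepsilon$ for every $f\in\CH(M,\Omega)$ and every $x\in U$. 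This is exactly pointwise equicontinuity of $\CH(M,\Omega)$ at $\bp$ with respect to the metric $\rho_\Omega$.

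For the second assertion, I would simply invoke the general topological fact, already used in the proof of Theorem~\ref{th:hyperbolic} and due to Kelley \cite[p.~237]{Kelley1975}, that any family of mappings which is pointwise equicontinuous with respect to some metric inducing the topology of the target is automatically an even family.

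I do not expect a genuine obstacle here; the argument is essentially a translation of the disc case through a conformal chart. The only point requiring mention is the existence of a local conformal uniformization $\psi\colon\D\to M$ through an arbitrary point, which is built into the definition of a conformal surface and, in the nonorientable case, is obtained by lifting to the orientable double cover.
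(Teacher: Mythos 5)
Your proof is correct. It reaches the same conclusion as the paper, but the route is genuinely different: the paper works globally on $M$, first disposing of the case where $M$ is not hyperbolic (all conformal harmonic maps are then constant), then using the distance-decreasing inequality $\rho_\Omega(f(x),f(x'))\le\dist_{\Pcal_M}(x,x')$ together with Corollary~\ref{cor:comparablemetric} (local comparability of $\rho_\Omega$ with the Euclidean metric) to get uniform Lipschitzness and hence equicontinuity. You instead work locally: you pull everything back to the disc through a conformal chart $\psi\colon\D\to M$ centred at $\bp$, observe that precomposition turns $\CH(M,\Omega)$ into a subfamily of $\CH(\D,\Omega)$, and then invoke equivalences (v) and (i) of Theorem~\ref{th:hyperbolic} directly. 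Your approach buys a real simplification: it avoids the case dichotomy on the conformal type of $M$ entirely (no Poincar\'e metric on $M$ is ever needed, so the argument applies verbatim when $M$ is $\C$ or a torus), and it bypasses Corollary~\ref{cor:comparablemetric}. The only point one might add for completeness, as you rightly flag, is the existence of a conformal chart $\D\to M$ around an arbitrary point; this is immediate from the definition of a conformal surface, and in the nonorientable case from the two-sheeted orientable conformal cover mentioned in the introduction. The final appeal to Kelley's criterion (pointwise equicontinuity with respect to a metric inducing the topology implies the family is even) matches what the paper uses in the proof of Theorem~\ref{th:hyperbolic}.
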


\begin{proof}
If $M$ is not hyperbolic (i.e., its universal conformal covering space is $\C$),
then every conformal harmonic map $M\to\Omega$ into a hyperbolic domain $\Omega$ is constant. 
Assume now that $M$ is hyperbolic and let $\dist_\Pcal$ denote the Poincar\'e distance
on $M$. Then,  every conformal harmonic map $f:M\to \Omega$ satisfies 
$\rho_\Omega(f(x),f(x'))\le \dist_{\Pcal_M}(x,x')$ for any pair of points $x,x'\in M$
(see \eqref{eq:distancedecreasing1}), so 
$\CH(M,\Omega)$ is uniformly Lipschitz in this pair of metrics.   
By Corollary \ref{cor:comparablemetric}, $\rho_\Omega$ is locally comparable
to the Euclidean metric. It follows that $\CH(M,\Omega)$ is pointwise
equicontinuous, and hence an even family by Kelley \cite[p.\ 237]{Kelley1975}.
\end{proof}

%
%
Recall that a complex manifold $\Omega$ is called {\em taut} if the family $\Oscr(\D,\Omega)$  
of holomorphic discs $\D\to\Omega$ is a {\em normal family} (see Wu \cite{Wu1967} and Royden 
\cite[p.\ 135]{Royden1971}). We introduce the corresponding notion for domains in $\R^n$ 
by using conformal harmonic discs.

\begin{definition}\label{def:taut}
Let $\Omega$ be a domain in $\R^n$.
\begin{enumerate}[\rm (i)]
\item A sequence $(f_i)_{i\in\N}$ in $\Cscr(\D,\Omega)$ is \emph{compactly divergent} if for any pair of 
compact sets $K\subset \D$ and $K'\subset \Omega$, $f_i(K)\cap K'$ is empty 
for all but finitely many indices $i\in\N$.
\item A family of maps $\Fscr\subset \Cscr(\D,\Omega)$ is \emph{normal}
if each sequence  in $\Fscr$ has a subsequence that is either uniformly convergent on
compact sets in $\Omega$ or compactly divergent.
\item The domain $\Omega$ is \emph{taut} if the family $\CH(\D,\Omega)$ 
of conformal harmonic discs  $\D\to\Omega$ is a normal family. 
\end{enumerate}
\end{definition}

%
%
\begin{remark}\label{rem:taut}
Tautness of a domain $\Omega$ is equivalent to the family 
$\CH(\D,\Omega)$ being locally compact in the compact-open topology.
If $\Omega$ is taut, then for any connected open Riemann surface, $M$, the space $\CH(M,\Omega)$ 
of conformal harmonic maps $M\to\Omega$ is also locally compact in the compact-open topology
(a normal family). In fact, given a point $p\in M$ and a compact set $K\subset \Omega$, 
the set $\{f\in \CH(M,\Omega):f(p)\in K\}$ is compact.
\qed\end{remark}

The next theorem is an analogue of the corresponding result for the Kobayashi metric,
due to Kiernan \cite{Kiernan1970} and Royden \cite{Royden1971}.

%
%
\begin{theorem}\label{thm:tauthyp}
The following hold for any domain $\Omega$ in $\R^n$, $n\ge 3$:
\begin{enumerate}[\rm (i)]
\item If $\Omega$ is complete hyperbolic, then it is taut.
\item If $\Omega$ is taut, then it is hyperbolic.
\end{enumerate}
\[
	\text{complete hyperbolic}\ \Longrightarrow \ \text{taut}
	\ \Longrightarrow \ \text{hyperbolic}
\]
\end{theorem}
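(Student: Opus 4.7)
The plan is to handle the two implications separately: the easier direction (ii) reduces to a contradiction via the R-hyperbolicity characterization in Theorem \ref{th:hyperbolic}, while the harder direction (i) proceeds by a compactness argument in the Kiernan--Royden style, crucially invoking Proposition \ref{prop:HRM} to keep the limit inside $\Omega$.

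For (ii), I would argue by contradiction. If $\Omega$ is taut but not hyperbolic, Theorem \ref{th:hyperbolic} furnishes a point $\bp\in\Omega$ at which R-hyperbolicity fails, so I can pick sequences $\bx_n\to \bp$, unit vectors $\bu_n\in\R^n$, and maps $f_n\in \CH(\D,\Omega)$ with $f_n(0)=\bx_n$ and $(f_n)_x(0)=r_n\bu_n$ for some $r_n\to\infty$. Since $f_n(0)\to\bp\in\Omega$, the sequence $\{f_n\}$ is not compactly divergent, so tautness yields a subsequence converging uniformly on compacta to some $f\in \CH(\D,\Omega)$. Interior estimates for harmonic functions promote $C^0_{\rm loc}$ convergence to $C^1_{\rm loc}$ convergence, so $(f_n)_x(0)\to f_x(0)$, a finite vector, contradicting $|r_n\bu_n|\to\infty$.

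For (i), let $\{f_n\}\subset \CH(\D,\Omega)$ be a sequence that is \emph{not} compactly divergent. Then there exist compact sets $K\subset\D$ and $K'\subset\Omega$ and, after passing to a subsequence, points $z_n\in K$ with $f_n(z_n)\in K'$; after a further subsequence, $z_n\to z_*\in K$ and $\bq_n:=f_n(z_n)\to \bq\in K'$. For any compact $L\subset \D$, set $R:=1+\sup_{z\in L}\dist_\Pcal(z,z_*)$. By the distance-decreasing property \eqref{eq:distancedecreasing1}, for all sufficiently large $n$ and all $z\in L$,
\[
\rho_\Omega(f_n(z),\bq)\;\le\;\dist_\Pcal(z,z_n)+\rho_\Omega(\bq_n,\bq)\;\le\;R+1.
\]
Complete hyperbolicity and Proposition \ref{prop:HRM} imply that the closed ball $K_L:=\{\bx\in\Omega:\rho_\Omega(\bx,\bq)\le R+1\}$ is compact in $\Omega$, so $f_n(L)\subset K_L$ for large $n$. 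Corollary \ref{cor:comparablemetric} compares $\rho_\Omega$ with the Euclidean distance on $K_L$, which combined with distance-decreasing yields a uniform Lipschitz estimate
\[
|f_n(z)-f_n(z')|\;\le\;C_L\,\dist_\Pcal(z,z'),\qquad z,z'\in L,
\]
with $C_L$ independent of $n$. Ascoli--Arzelà together with a diagonal argument produce a subsequence converging uniformly on compact subsets of $\D$ to a continuous map $f:\D\to\Omega$ (the images remain in the compact set $K_L$, so the limit stays in $\Omega$). Mean-value estimates for harmonic functions promote this to $C^\infty_{\rm loc}$ convergence, so harmonicity and the conformality conditions \eqref{eq:conformal} pass to the limit, giving $f\in\CH(\D,\Omega)$.

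The main obstacle is ensuring that the Ascoli--Arzelà limit lands inside $\Omega$ rather than escaping to $b\Omega$; this is exactly where \emph{completeness} is used, via Proposition \ref{prop:HRM}, which supplies a compact $\rho_\Omega$-ball trapping the images. Mere hyperbolicity would only bound the images in $\overline{\Omega}$ and would not prevent boundary clustering, which is the reason why the implication (i) genuinely requires completeness, while the converse implication (ii) does not.
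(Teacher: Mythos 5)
Your proposal is correct and follows essentially the same route the paper takes: the paper proves Theorem \ref{thm:tauthyp} merely by citing the standard Kiernan--Royden argument (Hopf--Rinow via Proposition \ref{prop:HRM} for (i), and \cite[Proposition 2]{Kiernan1970} for (ii)), and your write-up is a faithful, correctly executed elaboration of exactly that argument, using Theorem \ref{th:hyperbolic}, Proposition \ref{prop:HRM}, and Corollary \ref{cor:comparablemetric} in the expected way, together with interior estimates for harmonic maps to upgrade $C^0_{\rm loc}$ to $C^1_{\rm loc}$ convergence.
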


\begin{proof}
Part (i)  is a consequence of the Hopf--Rinow theorem 
\cite{HopfRinow1931}, \cite[Theorem 1.4.8]{Jost2017};
see Proposition \ref{prop:HRM} and Royden \cite[Corollary, p.\ 136]{Royden1971}.
It also follows from the result of Kobayashi \cite[Theorem 3.1]{Kobayashi2005}, 
which pertains to more general situations.
Part (ii) is proved along the same lines as for the Kobayashi pseudodistance, see
Kiernan \cite[Proposition 2]{Kiernan1970}. 
\end{proof}

%
%
%
%
\section{Hyperbolicity of convex domains}\label{sec:convex}

In this section we provide a geometric characterization of hyperbolic convex domains in $\R^n$. 
The corresponding result for Kobayashi hyperbolicity 
of convex domains in $\C^n$ is due to Barth \cite{Barth1980}, 
Harris \cite[Theorem 24]{Harris1979}, and Bracci and Saracco
\cite[Theorem 1.1]{BracciSaracco2009}. 

%
%
\begin{theorem}\label{th:convex}
The following conditions are equivalent for a 
convex domain $\Omega\subset\R^n$, $n\ge 3$.
\begin{enumerate}[\rm (i)]
\item The domain $\Omega$ is complete hyperbolic.
\item For any open Riemann surface, $M$, the family $\CH(M,\Omega)$ of conformal harmonic maps
$M\to\Omega$ is a normal family. In particular, $\Omega$ is taut (see Definition \ref{def:taut}).
\item The domain $\Omega$ is hyperbolic.
\item The domain $\Omega$ does not contain any $2$-dimensional affine subspaces.
\item The domain $\Omega$ has $n-1$ linearly independent separating hyperplanes.
\end{enumerate}
\end{theorem}

Recall that a hyperplane $\Sigma\subset\R^n$ is called {\em separating} for a domain $\Omega\subset\R^n$
if $\Sigma\cap\Omega=\varnothing$, so $\Omega$ lies in one of the two half-spaces in 
$\R^n\setminus \Sigma$. 

It is a classical result of convexity theory that a convex domain $\Omega\subset\R^n$ 
containing an affine line $L$ is the product $\Omega=D\times L$, where $D$ is a convex
domain in a hyperplane
perpendicular to $L$. Theorem \ref{th:convex} says that such $\Omega$ 
is (complete) hyperbolic if and only if $D$ does not contain any affine lines.

\begin{proof}[Proof of Theorem \ref{th:convex}]
%
%
The implications (i) $\Rightarrow$ (ii) $\Rightarrow$ (iii) hold for any domain in $\R^n$
by Theorem \ref{thm:tauthyp}. 
%
%
The implication (iii) $\Rightarrow$ (iv) is trivial. Indeed, an affine plane $L\subset \R^n$ 
contains arbitrarily big conformal linear discs, and hence for any domain $\Omega\subset\R^n$ 
containing $L$ the minimal pseudodistance $\rho_\Omega(\bp,\bq)$ between any pair of points 
$\bp,\bq\in L$ vanishes.

%
%
The proof of (iv) $\Rightarrow$ (v) 
follows \cite[proof of Proposition 3.5]{BracciSaracco2009}, which pertains to the complex analytic case.
Assume that (iv) holds. We may assume by translation that $\zero\in \Omega$. 
We shall construct $n-1$ linearly independent separating hyperplanes by induction.
Choose a point $\bp\in \Omega^c:=\R^n\setminus \Omega$. Since $\Omega$ is convex, 
there exists a separating hyperplane through $\bp$, i.e., there are a point $\by_1\in\R^n$ 
and a number $a_1\in \R$ such that
$\Omega\subset \{\bx\in\R^n: \bx\cdot\by_1>a_1\}$ and $\bp\cdot\by_1=a_1$.
Assume inductively that for some integer $k\in\{1,\ldots, n-2\}$ there are 
linearly independent separating hyperplanes determined by
the vectors $\by_1, \ldots,\by_k\in\R^n$ and numbers $a_1,\ldots,a_k\in \R$. Their intersection 
\[
	L=\bigcap_{j=1}^k \bigl\{\bx\in\R^n : \bx\cdot\by_j=0\bigr\}
\]
is an $(n-k)$-dimensional linear subspace. 
Since $\Omega$ does not contain any affine $2$-dimensional subspace,
$L\cap \Omega^c$ is nonempty, and we choose a point $\bq\in L\cap \Omega^c$.
By the argument at the beginning of the proof there exists a separating 
hyperplane through $\bq$, i.e.,
there are $\by_{k+1}\in\R^n$ and $a_{k+1}\in \R$
such that $\Omega\subset \{\bx\in\R^n: \bx\cdot\by_{k+1}>a_{k+1}\}$ and $\bq\cdot\by_{k+1}=a_{k+1}$. 
By the construction, the vectors $\by_1, \ldots,\by_{k+1}$ are linearly independent.
This completes the induction and shows that (v) holds.

%
%
In the proof of the implication (v) $\Rightarrow$ (i) we shall need the following lemma, which shows that 
an affine hyperplane $\Sigma\subset\R^n$ is at infinite minimal distance 
from any point of $\R^n\setminus \Sigma$. 

%
%
\begin{lemma}\label{lem:halfspace}
Let $\bx=(x_1,x_2,\ldots,x_n)$ be Euclidean coordinates on $\R^n$, $n\ge 3$, and let
$\H$ denote the half-space
\[
	\H = \bigl\{\bx=(x_1,\bx')\in\R^n: x_1>0\bigr\}.
\] 
If $\bx(t)=(x_1(t),\bx'(t))\in \H$ $(t\in [0,1))$ is a smooth path such that $x_1(t)$ clusters at $0$ or $+\infty$
(i.e., there is a sequence $t_j\in [0,1)$ with $\lim_{j\to\infty} t_j=1$ and 
$\lim_{j\to\infty} x_1(t_j) \in \{0,+\infty\}$), then  
\[
	\int_0^1 g_{\H}(\bx(t),\dot \bx(t)) \, dt =+\infty.
\]
\end{lemma}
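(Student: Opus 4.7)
\medskip

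The plan is to reduce the estimate to a pointwise lower bound
\begin{equation}\label{eq:planhalfspace}
g_{\H}(\bx,\bv)\ \ge\ \frac{|v_1|}{2 x_1},\qquad \bx=(x_1,\bx')\in\H,\ \bv=(v_1,\bv')\in\R^n,
\end{equation}
and then integrate. Once \eqref{eq:planhalfspace} is in hand, the lemma follows easily: along a smooth path $\bx(t)=(x_1(t),\bx'(t))$ in $\H$ we have
\[
\int_0^1 g_\H(\bx(t),\dot\bx(t))\,dt\ \ge\ \frac12\int_0^1 \frac{|\dot x_1(t)|}{x_1(t)}\,dt
\ =\ \frac12\int_0^1 \bigl|(\log x_1)'(t)\bigr|\,dt,
\]
and for any $t\in[0,1)$ the last integrand bounds the absolute jump, so
\[
\int_0^{t}\bigl|(\log x_1)'(s)\bigr|\,ds\ \ge\ \bigl|\log x_1(t)-\log x_1(0)\bigr|.
\]
If $x_1(t_j)$ clusters at $0$ or $+\infty$ along some $t_j\to 1$, then $|\log x_1(t_j)-\log x_1(0)|\to\infty$, and the total-variation integral diverges.

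The bound \eqref{eq:planhalfspace} should come directly from the Schwarz--Pick lemma. Given $f\in\CH(\D,\H)$ with $f(0)=\bx$ and $f_x(0)=r\bv$, the first component $f_1:\D\to(0,\infty)$ is a \emph{positive} harmonic function with $f_1(0)=x_1$. Since $\D$ is simply connected, $f_1$ admits a harmonic conjugate $\widetilde f_1$ with $\widetilde f_1(0)=0$, and $F=f_1+\imath\widetilde f_1$ is a holomorphic map from $\D$ into the right half-plane $\{\Re w>0\}$. The Schwarz--Pick lemma, applied in the Poincar\'e metric $|dw|/(2\Re w)$ of the half-plane, yields $|F'(0)|\le 2x_1$. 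Expanding by the Cauchy--Riemann equations,
\[
|F'(0)|^2 = |(f_1)_x(0)|^2 + |(f_1)_y(0)|^2 \ \le\ 4 x_1^2,
\]
and since $(f_1)_x(0)=rv_1$ we conclude $r|v_1|\le 2x_1$, i.e.\ $1/r \ge |v_1|/(2x_1)$. Taking the infimum over all admissible $f$ gives \eqref{eq:planhalfspace}. Note that conformality of $f$ was not used; only the positivity of the first component mattered.

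An alternative derivation of \eqref{eq:planhalfspace} proceeds via Theorem \ref{th:min-Kob}: the tube $\Tcal_\H=\H\times\imath\R^n$ is the product $\{z_1\in\C:\Re z_1>0\}\times\C^{n-1}$, whose Kobayashi metric equals $|\xi_1|/(2\Re z_1)$ by the product formula. Since $n\ge 3$, for any $\bv\in\R^n$ with $v_1\ne 0$ one can choose a conformal frame partner $\bw\in\R^n$ with $w_1=0$ (using the at least two extra tangential dimensions), and then \eqref{eq:gKob} delivers $g_\H(\bx,\bv)\ge\Kcal_{\Tcal_\H}(\bx,\bv+\imath\bw)=|v_1|/(2x_1)$.

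The only substantive step is \eqref{eq:planhalfspace}, and even that is essentially a one-line consequence of Schwarz--Pick; the rest is a log-variation argument. I do not anticipate any real obstacle.
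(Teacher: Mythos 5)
Your primary argument is essentially the paper's proof: you establish the same pointwise bound $g_\H(\bx,\bv)\ge |v_1|/(2x_1)$ by observing that the first component $f_1$ of any competing harmonic disc is positive harmonic with $(f_1)_x(0)=rv_1$, and you then integrate the logarithmic derivative — the paper cites the Schwarz lemma for positive harmonic functions ($|\nabla f_1(0)|\le 2f_1(0)$) as a black box, whereas you unpack its standard proof via a harmonic conjugate and Schwarz--Pick on the right half-plane. The argument is correct and matches the paper.
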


\begin{proof}
Denote by $z=x+\imath y$ the complex coordinate on $\C$.
Fix a point $\bx=(x_1,\bx')\in\H$ and a vector $\bv=(v_1,\bv')\in\R^n$ and consider
a harmonic map $f=(f_1,f_2,\ldots,f_n):\D\to \H$ such that $f(0)=\bx$ and
$f_x(0)=r\bv$ for some $r>0$. (We need not assume that $f$ be conformal.)
Hence, $f_1:\D\to(0,+\infty)$ is a harmonic function
with $f_1(0)=x_1$ and $(f_1)_x(0)=rv_1$. It follows that 
\begin{equation}\label{eq:estrv1}
	r|v_1|=|(f_1)_x(0)|\le |\nabla f_1(0)|\le 2|f_1(0)|=2x_1,
\end{equation}
where the second inequality holds by the Schwarz lemma for positive harmonic 
functions. Therefore, 
\[
	 \frac{1}{r} \ge \frac{|v_1|}{2x_1}.
\]
The infimum of the left hand side over all $r>0$ as above equals $g_\H(\bx,\bv)$, so 
\[
	g_\H(\bx,\bv) \ge \frac{|v_1|}{2x_1}.
\]
Given a smooth path $\bx(t)=(x_1(t),\bx'(t))\in \H$ $(t\in [0,1))$, the above shows that 
\[ 
	g_\H(\bx(t),\dot\bx(t)) \ge  \frac{|\dot x_1(t)|}{2x_1(t)},\quad\ t\in [0,1).
\] 
If $x_1(t)\in (0,\infty)$ clusters at $0$ or $+\infty$ as $t\to 1$, then 
\[
	\int_{0}^1  g_\H(\bx(t),\dot\bx(t)) dt \ge
	\frac 12 \int_{0}^1 \frac{|\dot x_1(t)|}{x_1(t)} dt =  
	\frac 12 \int_0^1 |d\log x_1(t)| =+\infty.
\]
This proves the lemma.
\end{proof}

At any boundary point $\bp\in b\Omega$ of a convex domain $\Omega\subset\R^n$ 
there is a supporting affine hyperplane $\Sigma\subset\R^n$ passing through
$\bp$ such that $\Omega$ is contained in one of the half-spaces in $\R^n\setminus \Sigma$.
Hence, by Lemma \ref{lem:halfspace} and the comparison principle \eqref{eq:comparison}, 
any smooth path $\bx(t)\in \Omega$ $(t\in [0,1))$ which clusters at $\bp\in b\Omega$ 
as $t\to 1$ has infinite $g_\Omega$-length. This gives:

%
%
\begin{corollary} \label{cor:locallycomplete}
A convex domain $\Omega\subset\R^n$ $(n\ge 3)$ 
is locally complete hyperbolic at every boundary point $\bp\in b\Omega$.
\end{corollary}

We are now ready to prove the implication (v) $\Rightarrow$ (i).
Assume that $\Omega$ satisfies condition (v).
Up to a translation and rotation on $\R^n$,  there are linearly independent unit vectors 
$\by_1, \ldots,\by_{n-1}\in\R^{n-1}\times\{0\}$ such that, setting
\begin{equation}\label{eq:Hi}
	\H_i= \{\bx\in\R^n: \bx\cdot\by_i>0\},\quad \ i=1,\ldots,n-1, 
\end{equation}
we have that $\Omega\subset \bigcap_{i=1}^{n-1}\H_i$.
Let $\bx(t) =(\bx'(t),x_n(t))$, $t\in[0,1)$, be a  divergent path in $\Omega$. Set 
\begin{equation}\label{eq:xi}
	x_i(t) := \bx(t)\,\cdotp \by_i  = \bx'(t)\,\cdotp \by_i >0, \quad\ i=1,\ldots,n-1,\quad t\in[0,1).
\end{equation}
If $\bx(t) $ clusters at some point $\bp\in b\Omega$ as $t\to 1$, 
then $\bx(t)$ has infinite $g_\Omega$-length by Corollary \ref{cor:locallycomplete}. 
Likewise, if one of the functions $x_i(t)$ for $i=1,\ldots,n-1$ clusters at
$+\infty$, then by Lemma \ref{lem:halfspace} the path $\bx(t)$ has infinite minimal length 
in $\H_i$, and hence also in $\Omega\subset \H_i$.

It remains to consider the case when the functions $x_i(t)$ in \eqref{eq:xi} are bounded, 
\begin{equation}\label{eq:estxi}
	0<\bx(t)\,\cdotp \by_i \le c_1,\quad \ t\in [0,1),\ i=1,\ldots,n-1,
\end{equation}
and the path $\bx(t)\in\Omega$ does not cluster anywhere on $b\Omega$. 
In this case, the last component $x_n(t)\in\R$ of $\bx(t)$ clusters at $\pm \infty$ as $t\to 1$,
and hence $\int_0^1|\dot x_n(t)|dt=+\infty$. Hence, to see that the path $\bx(t)$ has infinite 
$g_\Omega$-length, it suffices to show that 
\begin{equation}\label{eq:lowerbound-xn}
	g_\Omega(\bx(t),\dot\bx(t)) \ge c_2 |\dot x_n(t)| 
\end{equation}
for constant $c_2>0$ depending on $c_1>0$ in \eqref{eq:estxi}
and the vectors $\by_1,\ldots,\by_{n-1}$.

Fix a point $\bx\in\Omega$ satisfying \eqref{eq:estxi} and a unit vector $\bv=(\bv',v_n)\in\R^n$, 
and consider a conformal harmonic map $f=(f_1,f_2,\ldots,f_n):\D\to \Omega$ such that $f(0)=\bx$ and
$f_x(0)=r\bv$ for some $r>0$. Then, $f_y(0)=r\bw=r(\bw',w_n)$ 
where $(\bv,\bw)$ is an orthonormal frame:
\[
	0=\bv\,\cdotp \bw=\bv'\,\cdotp \bw' + v_nw_n,\quad\  |\bv|=|\bw|=1.
\]
From this and the Cauchy--Schwarz inequality it follows that
\[
	v_n^2(1-|\bw'|^2) = v_n^2 w_n^2 
	= |\bv'\,\cdotp \bw'|^2\le |\bv'|^2 |\bw'|^2 = (1-v_n^2)|\bw'|^2,
\]
and hence 
\[
	|v_n| \le |\bw'|\le  c_3 \max_{i=1,\ldots,n-1} |\bw\,\cdotp \by_i|
\]
where the constant $c_3>0$ depends only on the (linearly independent) vectors 
$\by_1,\ldots,\by_{n-1}\in\R^{n-1}\times \{0\}$.  Therefore,
\[
	r |v_n| \le c_3 \max_{i=1,\ldots,n-1} r |\bw\,\cdotp \by_i| \le 
	2c_3 \max_{i=1,\ldots,n-1} \bx\,\cdotp \by_i,
\]
where the second estimate follows from \eqref{eq:estrv1}. (It suffices to apply 
\eqref{eq:estrv1} to the conformal harmonic disc $z\mapsto \tilde f(z)=f(\imath z)$ in each of the 
half-spaces $\H_i$ \eqref{eq:Hi}. Note that $\tilde f_x(0)=f_y(0)=r\bw$.) 
Together with the assumption \eqref{eq:estxi} and setting $c_2=(2c_1c_3)^{-1}$,
this gives
\[
	 \frac{1}{r} \ge \frac{|v_n|}{2c_3 \max_{i=1,\ldots,n-1} \bx\,\cdotp \by_i}
	 \ge \frac{|v_n|}{2c_1c_3} = c_2|v_n|
\]
for any $r>0$ as above. Taking the infimum of the left hand side 
gives $g_\Omega(\bx,\bv) \ge c_2 |v_n|$. Applying this with $\bx=\bx(t)$ and $\bv=\dot\bx(t)$
yields \eqref{eq:lowerbound-xn}. 
This proves that $\Omega$ is complete hyperbolic, so (i) holds.
\end{proof}

%
%
The famous {\em half-space theorem} of Hoffman and Meeks \cite{HoffmanMeeks1990IM}
says that a half-space in $\R^3$ does not contain any non-flat minimal surfaces which are
proper in $\R^3$.  Clearly, properness is a key assumption in their result. 
However, we make the following observation.

\begin{proposition}[Half-space theorem for parabolic minimal surfaces]
\label{prop:halfspace}
Let $\H$ be a half-space in $\R^3$, and let $M$ be the complement of finitely many points
in a compact Riemann surface $R$ without boundary. Then, every conformal minimal surface 
$f:M\to\H$ (possibly branched, not necessarily proper) is flat, with image  
contained in an affine plane parallel to $b\H$.
\end{proposition}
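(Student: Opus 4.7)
My plan is to reduce the statement to a fact about positive harmonic functions on $M$ and then exploit the residue theorem on the closed Riemann surface $R$.

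After a rigid motion I may assume $\H = \{\bx = (x_1,x_2,x_3)\in \R^3 : x_1 > 0\}$, so $b\H=\{x_1=0\}$. Writing $f=(f_1,f_2,f_3):M\to\H$, the first coordinate $f_1:M\to (0,+\infty)$ is a positive harmonic function on the conformal surface $M$ (since conformal harmonic means each component is harmonic). I want to show $f_1\equiv c$ for some $c>0$; this immediately forces $f(M)\subset \{x_1=c\}$, an affine plane parallel to $b\H$, so that $f$ is flat.

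The crux is the intermediate claim: \emph{every positive harmonic function on $M=R\setminus\{p_1,\ldots,p_k\}$ is constant.} To prove it, I would first invoke the classical B\^ocher-type local description of an isolated singularity of a positive harmonic function: in a local holomorphic coordinate $z_i$ centred at $p_i$,
\[
	f_1(z_i)= -a_i\log|z_i|+h_i(z_i),\qquad a_i\ge 0,
\]
with $h_i$ harmonic in a neighbourhood of $0$. Since $f_1$ is harmonic on $M$, the $(1,0)$-form $\di f_1$ is a globally defined holomorphic form on $M$, and the local expansion yields
\[
	\di f_1 = -\frac{a_i}{2}\,\frac{dz_i}{z_i} + \di h_i
\]
near each $p_i$. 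Hence $\di f_1$ extends across every puncture to a meromorphic $1$-form on the closed Riemann surface $R$, with at worst a simple pole at $p_i$ of residue $-a_i/2$.

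The residue theorem on $R$ then gives $\sum_i a_i = 0$; since each $a_i\ge 0$, every $a_i$ must vanish. Therefore $f_1$ extends harmonically across each $p_i$ to a harmonic function on the compact surface $R$, and by the maximum principle such a function is constant. Applied to the first coordinate of $f$, this completes the proof. The only nontrivial ingredient is the B\^ocher normal form for a positive harmonic function at an isolated singularity; once that is in hand, the argument reduces cleanly to the residue theorem and the maximum principle on $R$, and I foresee no real obstacle.
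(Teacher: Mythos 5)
Your argument is correct, and it takes a genuinely different route from the paper's. Both proofs reduce the statement to showing that $f_1$ is constant, and both ultimately invoke the maximum principle on $R$, but the mechanisms differ. The paper composes $f$ with the bounded MPSH function $u(x_1)=\frac{1}{x_1+1}$, observes that $u\circ f\in(0,1)$ is bounded subharmonic on $M$, and uses the elementary fact that a \emph{bounded} subharmonic function extends subharmonically across isolated punctures; constancy then follows at once on the compact surface $R$. You instead work with the \emph{unbounded} positive harmonic function $f_1$ directly: B\^ocher's theorem gives the normal form $f_1=-a_i\log|z_i|+h_i$ with $a_i\ge 0$ at each puncture, the holomorphic form $\di f_1$ extends meromorphically to $R$ with simple poles of residue $-a_i/2$, and the residue theorem forces $\sum_i a_i=0$, hence every $a_i=0$. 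What the paper's route buys is economy and thematic unity with the rest of the article (the MPSH machinery is its central tool, and the removable-singularity lemma for bounded subharmonic functions is lighter than B\^ocher's theorem). What your route buys is that it uses only classical potential theory and the global residue theorem on $R$, making the role of the compactness of $R$ topologically transparent; it also exhibits why the result fails for general open $M$ --- without a closed $R$ behind it there is no residue constraint, and indeed $-\log|z|$ on $\D^*$ or a catenoid end gives a nonconstant positive harmonic $f_1$. One small gloss worth making explicit in your write-up: B\^ocher's decomposition is with respect to a local holomorphic coordinate, and harmonicity is conformally invariant on a Riemann surface, so the planar theorem applies verbatim in each chart $z_i$.
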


\begin{proof}
Let $\H=\{(x,y,z)\in\R^3:x> 0\}$. The function $u(x)=\frac{1}{x+1}$
is strongly convex on $(0,\infty)$ and takes values in $(0,1)$. Hence, $u$ is MPSH on $\H$,
considered as a function independent of the $y$ and $z$ variables. 
Given a conformal harmonic map $f=(f_1,f_2,f_3):M\to\H$, the function $u\circ f=f_1$ is bounded
subharmonic on $M$, and hence it extends across the punctures to a subharmonic
function on $R$. Since $R$ is compact, the extended function is constant by
the maximum principle, so $f(M)$ lies in a hyperplane $x=const>0$. 
\end{proof}

%
%
%
%
\section{The minimal metric on $\Omega\subset\R^n$ and the Kobayashi metric on $\Omega\times \imath\R^n\subset\C^n$}
\label{sec:min-Kob}

In this section we show that the minimal metric on a domain $\Omega\subset\R^n$ 
$(n\ge 3)$ is in a certain sense bounded from below by the Kobayashi metric on the tube
$\Tcal_\Omega=\Omega\times \imath\R^n\subset\C^n$. In particular, if 
$\Tcal_\Omega$ is Kobayashi (complete) hyperbolic then $\Omega$ is 
(complete) hyperbolic; see Theorem \ref{th:min-Kob}.
 
Fix a point $\bx\in \Omega$ and a $2$-plane $\Lambda\in  G_2(\R^n)$, and let 
$(\bu,\bv)\in\R^n\times \R^n$ be a conformal frame spanning $\Lambda$.
Then, the complex vectors $\bu\pm \imath \bv\in\C^n$ are {\em null vectors}, i.e.,
they belong to the {\em null quadric} 
\[
	\Acal=\bigl\{\bz=(z_1,\ldots,z_n)\in \C^n: z_1^2+z_2^2+\ldots+z_n^2=0\bigr\}.
\]
(See \cite[Section 2.3]{AlarconForstnericLopez2021} for the details.)
Conversely, the real and imaginary part of a null vector $0\ne \xi\in \Acal$ is a conformal frame.
A pair of conformal frames $(\bu,\bv)$ and $(\bu',\bv')$ span the same $2$-plane 
if and only if the corresponding complex vectors satisfy 
\[
	\bu+\imath \bv = c(\bu'\pm \imath\bv')\ \ \text{for some}\ \ c\in \C\setminus \{0\}.
\]
	
Let $f:\D\to\Omega$ be a conformal harmonic disc with $f(0)=\bx$ and $df_0\ne 0$.
Denote by $\zeta=x+\imath y$ the complex coordinate on $\C$. 
The vectors 
\[
	\bu=f_x(0)/|f_x(0)|,\quad \ \bv=f_y(0)/|f_y(0)|=f_y(0)/|f_x(0)|
\]
form an orthonormal frame spanning the 2-plane $\Lambda=df_0(\R^2)$, and
$\|df_0\|=|f_x(0)|=|f_y(0)|$. Let $F=f+\imath g:\D\to \Tcal_\Omega$
be the holomorphic extension of $f$ with $g(0)=0$. Then $F$ is a holomorphic null disc
with $F(0)=\bx$, in the sense that its complex derivative map $F':\D\to \C^n$ has range in the 
null quadric $\Acal$. The Cauchy--Riemann equations imply
\[
	F'(0)=f_x(0)+\imath g_x(0)= f_x(0)-\imath f_y(0)= |f_x(0)| (\bu-\imath \bv). 
\]
Recall that $\Kcal$ denotes the infinitesimal Kobayashi pseudometric.
Since holomorphic null discs in $\Tcal_\Omega$ 
are a subset of the space of all holomorphic discs $\D\to\Tcal_\Omega$,  it follows that
\[
	\Mcal_\Omega(\bx,\Lambda) 
	\ \ge\ \Kcal_{\Tcal_\Omega}\left(\bx,\bu-\imath \bv\right).
\]
From this and the definition of $g_\Omega$ \eqref{eq:g} 
we infer that for any $(\bx,\bu)\in \Omega\times\R^n$ we have that
\begin{equation}\label{eq:gKob}
	g_\Omega(\bx,\bu) \ \ge\ \inf_\bv \Kcal_{\Tcal_\Omega}(\bx,\bu+\imath \bv),
\end{equation}
where the infimum is over all vectors $\bv\in\R^n$ such that $(\bu,\bv)$ is a conformal frame.
(If $\bu\ne 0$ then vectors $\bv$ with this property form an $(n-2)$-sphere in the hyperplane
orthogonal to $\bu$. If $\bu=0$ then both sides equal zero.) 

%
%
\begin{theorem} \label{th:min-Kob}
Let $\Omega$ be a domain in $\R^n$ $(n\ge 3)$, and set 
$\Tcal_\Omega=\Omega\times \imath\R^n\subset\C^n$
\begin{enumerate}[\rm (i)]
\item 
If $\Tcal_\Omega$ is Kobayashi hyperbolic, then $\Omega$ is hyperbolic.
\item 
If $\Tcal_\Omega$ is Kobayashi complete hyperbolic, then $\Omega$ is complete hyperbolic.
\end{enumerate}
\end{theorem}

\begin{proof}
Part (i) is a consequence of  \eqref{eq:gKob} and Royden's characterization 
of hyperbolicity (see Definition \ref{def:Rhyperbolic} 
and Theorem \ref{th:hyperbolic} for minimal hyperbolicity, and 
\cite[Theorem 2, p.\ 133]{Royden1971} for Kobayashi hyperbolicity).

If $\Tcal_\Omega$ is Kobayashi complete hyperbolic, then it is taut and hence pseudoconvex
(see Wu \cite{Wu1967} or Kobayashi \cite[Theorem 5.2.1]{Kobayashi1998}). 
It follows that $\Omega$ is convex \cite[Corollary 2.5.12]{Hormander1990}.
Since $\Omega$ is hyperbolic by part (a), Theorem \ref{th:convex} implies that it is complete
hyperbolic.
\end{proof}

\begin{example}
The implication in Theorem \ref{th:min-Kob} (i) cannot be reversed.
Indeed, let $\Omega\subset\R^3$ be a convex domain which contains an affine line, but it does
not contain any affine $2$-plane. 
Since the tube in $\C^3$ over an affine line in $\R^3$ contains an affine copy of $\C$, 
$\Tcal_\Omega$ is not Kobayashi hyperbolic. On the other hand, since $\Omega$ does not contain 
any affine plane, it is (complete) hyperbolic by Theorem \ref{th:convex}. 
\qed \end{example}

%
%
%
%
\section{A pseudometric defined by minimal log-plurisubharmonic functions}\label{sec:Sibony} 

In this section we introduce a Finsler pseudometric $\Fcal_\Omega$
on any domain $\Omega\subset \R^n$, defined by minimal log-plurisubharmonic functions $\Omega\to [0,1]$; 
see Definition \ref{def:F}. Its main interest is that
it gives a lower bound for the minimal pseudometric $\Mcal_\Omega$ (see Proposition \ref{prop:FM}),
which can often be used to establish hyperbolicity; see e.g.\ Theorem \ref{th:mpsh}. 
We follow ideas from Sibony's paper \cite{Sibony1981}, which pertain to the complex case.

Let $\bx=(x_1,\ldots,x_n)$ be Euclidean coordinates on $\R^n$. Given a domain $\Omega\subset\R^n$
and a function $u:\Omega\to\R$ of class $\Cscr^2$, we denote by $\Hess_u(\bx)=\Hess_u(\bx,\cdotp)$ 
its Hessian form at $\bx\in\Omega$, given by the symmetric matrix $H_u(\bx)=(u_{x_ix_j}(\bx))_{i,j=1}^n$ 
of second order partial derivatives of $u$ at $\bx$. The trace of $\Hess_u$ is the Laplace operator:
\[
	\tr\, \Hess_u = \Delta u = \sum_{i=1}^n u_{x_i x_i} = \sum_{i=1}^n \lambda_i,
\]
where $\lambda_1,\ldots,\lambda_n$ are the eigenvalues of $\Hess_u$.
Given $\bx\in \Omega$ and a $2$-plane $\Lambda\subset \R^n$, let 
\[ 
	\tr_\Lambda \Hess_u(\bx)
\] 
denote the trace of the restriction of $\Hess_u(\bx)$ to $\Lambda$.
Choosing an orthonormal frame $(\bv,\bw)$ for $\Lambda$ and setting 
$\tilde u(x,y) = u(\bx+ x\bv+y\bw)$ for $(x,y)\in\R^2$ near $(0,0)$, it is immediate that 
\begin{equation}\label{eq:Fu2}
	\tr_\Lambda \Hess_u(\bx) = \Delta \tilde u(0,0) = \Delta_{(x,y)} u(\bx+ x\bv+y\bw)|_{x=y=0}.
\end{equation}
%
If $\Lambda$ is a complex line in $\C^n$ then 
$\tr_\Lambda \Hess_u(\bx)$ equals the Levi form $\langle dd^c u|_\bx,\xi\wedge J\xi\rangle$
of $u$ at $\bx$ on any unit vector $\xi\in\Lambda$. (Here, $J$ is the standard almost complex 
structure operator on $\C^n$.)

We recall the following notion from \cite[Definition 8.1.1]{AlarconForstnericLopez2021}. 

%
%
\begin{definition}\label{def:mpsh}
An upper-semicontinuous function $u:\Omega\to[-\infty,+\infty)$ is 
{\em minimal plurisubharmonic}, abbreviated MPSH, if for every affine $2$-plane $L\subset\R^n$
the restriction $u:L\cap\Omega \to [-\infty,+\infty)$ is subharmonic 
in any conformal linear coordinates on $L$.
\end{definition}

Note that every MPSH function on a domain $\Omega$ in $\R^{2n}=\C^n$ 
is also plurisubharmonic in the usual sense of complex analysis. 
Indeed, an upper-semicontinuous function is plurisubharmonic if and only if its 
restriction to every affine complex line is subharmonic (see Klimek \cite{Klimek1991}); 
the latter constitute a small subset of the set of all affine $2$-planes.

By \cite[Proposition 8.1.2]{AlarconForstnericLopez2021}, a function 
$u\in \Cscr^2(\Omega)$ is MPSH if and only if 
\begin{equation}\label{eq:mpsh}
	\tr_\Lambda \Hess_u(\bx)\ge 0 
	\ \  \text{holds for every $(\bx,\Lambda)\in \Omega\times  G_2(\R^n)$},
\end{equation}
and this holds if and only if $\lambda_1(\bx)+\lambda_2(\bx)\ge 0$ for all $\bx\in\Omega$, 
where $\lambda_1(\bx)$ and $\lambda_2(\bx)$ denote the smallest two eigenvalues of 
$\Hess_u(\bx)$. 
We say that $u\in \Cscr^2(\Omega)$ is {\em strongly minimally plurisubharmonic} if strong
inequality holds in \eqref{eq:mpsh}. The key property of MPSH functions pertaining
to minimal surfaces is the following; see \cite[Corollary 8.1.7]{AlarconForstnericLopez2021}.

\begin{proposition}\label{prop:mpsh}
An upper-semicontinuous function $u:\Omega\to[-\infty,+\infty)$ is MPSH if and only if
for each conformal harmonic map $f:M\to \Omega$ from an open conformal surface 
the composition $u\circ f:M\to\R$ is a subharmonic function on $M$.
If in addition $u\in \Cscr^2(\Omega)$ is strongly MPSH and $f$ is an immersion,  
then $u\circ f$ is strongly subharmonic. 
\end{proposition}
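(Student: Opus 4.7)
The plan is to prove both directions of the equivalence and the strong-MPSH addendum, with the main work a direct Hessian computation in the smooth case followed by a regularization argument in the general case.

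For the ``if'' direction, I would use that on any affine $2$-plane $L\subset\R^n$ with conformal linear coordinates $(x,y)$, the inclusion $\iota:L\cap\Omega\hookrightarrow\Omega$ is itself a conformal harmonic map. Applying the hypothesis to $\iota$ forces $u|_{L\cap\Omega}$ to be subharmonic, which is Definition \ref{def:mpsh} of MPSH.

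For the ``only if'' direction I would first handle the case $u\in\Cscr^2(\Omega)$. Working locally in isothermal coordinates $(x,y)$ on $M$, the chain rule combined with the harmonic equation $\Delta f=0$ leaves
\[
	\Delta(u\circ f)(p) = \Hess_u(f(p))(f_x,f_x) + \Hess_u(f(p))(f_y,f_y).
\]
At a regular point of $f$, set $\lambda=|f_x|=|f_y|>0$ and let $\bu=f_x/\lambda$, $\bv=f_y/\lambda$; these form an orthonormal frame of $\Lambda:=df_p(\R^2)$, so the right-hand side equals $\lambda^2\,\tr_\Lambda \Hess_u(f(p))\ge 0$ by \eqref{eq:mpsh}. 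At branch points of $f$ the same formula returns $0$, so $\Delta(u\circ f)\ge 0$ throughout $M$. This simultaneously settles the final sentence of the proposition: if $u$ is strongly MPSH and $f$ is an immersion, strict inequality in \eqref{eq:mpsh} together with $\lambda>0$ yields $\Delta(u\circ f)>0$, so $u\circ f$ is strongly subharmonic.

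To remove the $\Cscr^2$ assumption on $u$, I would regularize by convolution with a smooth non-negative radially symmetric mollifier $\chi_\epsilon$, setting $u_\epsilon=u*\chi_\epsilon$ on $\{\bx\in\Omega:\dist(\bx,b\Omega)>\epsilon\}$. Translation invariance of the MPSH condition and Fubini's theorem give that each $u_\epsilon$ is smooth and MPSH. Moreover every $\Cscr^2$ MPSH function is in fact subharmonic in $\R^n$ (ordering the eigenvalues $\lambda_1\le\cdots\le\lambda_n$ of $\Hess_u$, the MPSH inequality $\lambda_1+\lambda_2\ge 0$ forces $\lambda_2\ge 0$ and hence $\Delta u=\lambda_1+\cdots+\lambda_n\ge 0$), and the classical regularization theorem for subharmonic functions then yields $u_\epsilon\searrow u$ pointwise as $\epsilon\searrow 0$. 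The $\Cscr^2$ case makes each $u_\epsilon\circ f$ subharmonic on its natural domain, and a decreasing limit of subharmonic functions is subharmonic, so $u\circ f$ is subharmonic on $M$. The main obstacle lies precisely in this regularization step---verifying both the stability of the MPSH cone under mollification and the monotone decrease of $u_\epsilon$ to $u$---after which the proposition follows by a routine limiting argument.
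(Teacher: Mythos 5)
Your proposal is correct and mirrors the paper's treatment: the core identity you derive, $\Delta(u\circ f)=\Hess_u(f)(f_x,f_x)+\Hess_u(f)(f_y,f_y)=\|df\|^2\,\tr_{df(\R^2)}\Hess_u(f)$, is exactly the formula \eqref{eq:mainformula} the paper invokes from the cited reference, and your mollification step is the same device the paper later uses (citing \cite[Proposition 8.1.6]{AlarconForstnericLopez2021}) to pass from the $\Cscr^2$ case to general upper-semicontinuous MPSH functions.
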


For functions $u$ of class $\Cscr^2(\Omega)$, this follows from the following formula,
which holds for every  conformal harmonic map $f:\D\to \Omega$
(see \cite[Lemma 8.1.3]{AlarconForstnericLopez2021}):  
\begin{equation}\label{eq:mainformula}
	\Delta (u\circ f)(z) = \tr_{df_z(\R^2)} \Hess_u(f(z)) \,\cdotp \|df_z\|^2,\quad z\in \D.
\end{equation}
Note that the second order derivative of $f$ does not appear in the formula, and the expression vanishes at 
any critical point of $f$. If $f$ is not conformal harmonic, the formula contains an additional term 
involving the mean curvature vector field of $f$;   
see \cite[Eq.\ (8.6)]{AlarconForstnericLopez2021}.

%
%
\begin{lemma} \label{lem:log}
Let $\bx$ be the Euclidean coordinate on $\R^n$ for $n\ge 3$. 
\begin{enumerate}[\rm (a)]
\item The function $\log|\bx|$ is MPSH on $\R^n$.
\item If $u$ is MPSH and $\chi$ is a convex increasing function defined 
on the range of $u$, then $\chi \circ u$ is MPSH.
In particular, $\bx\mapsto |\bx-\bx_0|^p$ is MPSH on $\R^n$ for every $\bx_0\in \R^n$ and $p>0$.
\item If $u$ is MPSH on $\Omega\subset \R^n$ and $\bp\in\R^n$ then 
the function $\bx\mapsto |\bx-\bp|^2 \E^{u(\bx)}$ and its logarithm are MPSH on $\Omega$.
\end{enumerate}
\end{lemma}

\begin{proof}
The Hessian matrix of $\log|\bx|=\frac 12 \log|\bx|^2$ equals
\[
	H(\bx) = \frac{1}{|\bx|^2} I - \frac{2}{|\bx|^4}\bigl( x_ix_j\bigr)_{i,j=1}^n,
\] 
where $I$ stands for the $n\times n$ identity matrix.
At the point $(p,0,\ldots,0)$ with $p>0$ this equals $p^{-2}\mathrm{diag}(-1,+1,\ldots,+1)$, 
so the sum of the smallest two eigenvalues equals zero. By rotational symmetry 
the same holds at every point of $\R^n\setminus\{0\}$, so $\log|\bx|$ is MPSH.
(See also \cite[Example 2.8]{HarveyLawson2013IUMJ}.)

Part (b) is a consequence of the fact that for any pair of 
functions $g:\D\to\R$ and $\chi:\R\to\R$, 
\begin{equation}\label{eq:Laplacehg}
	\Delta (\chi \circ g) = (\chi'\circ g) \Delta g + (\chi''\circ g) |\nabla g|^2.
\end{equation}
Applying this with $\chi(t)=e^{p t}$ and using that $\log|\bx-\bx_0|$ is MPSH
gives the second statement in (b). Part (c) follows from (a) and (b), noting that if 
$u$ and $v$ are MPSH then so are $u+v$ and $\E^u$.
\end{proof}

\begin{remark}
In the complex case, the function $\log{\sum_{i=1}^k |f_i|^2}$ is plurisubharmonic 
for any collection of holomorphic functions $f_i$ which are not all identically zero
on a given connected complex manifold. Nothing similar holds for MPSH functions.
In fact, it can be seen that for an $n\times n$ matrix $A$, the function $\log |A\bx|$  
is MPSH if and only if $A=rO$ where $r>0$ and $O$ is an orthogonal matrix.
One can get functions which are MPSH in some neighbourhood of $0\in\R^n$ by adding
higher order terms to the argument of $\log$, but such functions do not seem
useful for our subsequent analysis. This again reflects the fact that 
only rigid motions of $\R^n$ preserve the class of conformal minimal surfaces.
\qed\end{remark}

Recall that $ G_2(\R^n)$ denotes the Grassman manifold of $2$-planes in $\R^n$.

%
%
\begin{definition}\label{def:F}
The pseudometric $\Fcal_\Omega:\Omega\times  G_2(\R^n)\to \R_+$ is defined by
\begin{equation}\label{eq:F}
	\Fcal_\Omega(\bx,\Lambda) 
	= \frac12 \sup_u \sqrt{\tr_{\Lambda} \Hess_u(\bx)},
	\quad\ \bx\in \Omega,\ \Lambda\in  G_2(\R^n),
\end{equation}
where the supremum is over all MPSH functions $u:\Omega\to [0,1]$
which are of class $\Cscr^2$ near $\bx$ such that $u(\bx)=0$ and 
$\log u$ is MPSH on $\Omega$. If there are no such functions other than
$u=0$, which may happen if $\Omega$ is unbounded, we set $\Fcal_\Omega(\bx,\Lambda)=0$.
\end{definition}

%
%
\begin{proposition}\label{prop:FM}
On any domain $\Omega\subset\R^n$ $(n\ge 3)$ we have that $\Fcal_\Omega\le \Mcal_\Omega$. 
\end{proposition}

\begin{proof}
Fix $(\bx,\Lambda)\in \Omega\times  G_2(\R^n)$. Let $f\in \CH(\D, \Omega)$ be such that $f(0)=\bx$
and $df_0(\R^2) = \Lambda$. Also, let $u:\Omega\to [0,1]$ be as in 
the definition of the pseudometric $\Fcal_\Omega$ (see Definition \ref{def:F}). 
The function $v=u\circ f:\D\to [0,1]$
is then subharmonic, of class $\Cscr^2$ near the origin, $v(0)=0$, and 
$\log v=\log u\circ f:\D\to [-\infty,0)$ is also subharmonic. 
By Sibony \cite[Proposition 1]{Sibony1981} we have that $\Delta v(0)\le 4$.
(The unique extremal function with $\Delta v(0)= 4$ is $v(x+\imath y)=x^2+y^2$.) 
Hence,  \eqref{eq:mainformula} implies 
$
	\tr_{\Lambda} \Hess_u(\bx) \,\cdotp \|df_0\|^2 = \Delta v(0) \le 4.
$
Equivalently, 
\[
	\frac12 \sqrt{\tr_{\Lambda} \Hess_u(\bx)}  \le \frac{1}{\|df_0\|}.
\]
The supremum of the left hand side over all admissible functions $u$ equals 
$\Fcal_\Omega(\bx,\Lambda)$ (see \eqref{eq:F}), while the infimum of the right hand side 
over all conformal harmonic discs $f$ as above equals $\Mcal_\Omega(\bx,\Lambda)$ 
(see \eqref{eq:MLambda}). This proves Proposition \ref{prop:FM}.
\end{proof}

We now give some applications of the lower bound in Proposition \ref{prop:FM} 
in the spirit of those in \cite{Sibony1981}. 
Pick a smooth increasing function $\theta:[0,\infty)\to [0,1]$ such that 
\begin{equation}\label{eq:theta}
	\theta(t)=t\ \ \text{for}\ 0\le t\le \frac12,\quad\ \ \theta(t)=1\ \ \text{for}\ t\ge 1.
\end{equation}
Let $A>0$ be chosen such that 
\begin{equation}\label{eq:A}
	(\log\theta)'' (t)\ge -A\ \ \text{for all $t\ge \frac12$}.
\end{equation}
Fix a constant $\beta>0$ and set
\begin{equation}\label{eq:h}
	h(\bx) = \log\theta(\beta|\bx|^2),\quad\ \bx\in\R^n.
\end{equation}

%
%
\begin{lemma}\label{lem:estimate1}
If $h$ is given by \eqref{eq:h} and $A$ satisfies \eqref{eq:A}, then 
\begin{equation}\label{eq:loweresth}
	\tr_\Lambda \Hess_h(\bx) \ge -4A\beta 
	\ \ \text{for all $\bx\in \R^n\setminus \{0\}$ and $\Lambda\in  G_2(\R^n)$}.
\end{equation}
\end{lemma}

\begin{proof}
By the definition of $\theta$ and in view of Lemma \ref{lem:log} (a), $h$ is MPSH except perhaps 
on the spherical shell $\frac12 \le \beta|\bx|^2\le 1$. Fix a point $\bx$ in this shell and a 2-plane 
$\Lambda\subset\R^n$. Let $(\bu,\bv)$ be an orthonormal frame for $\Lambda$. 
Consider the function $g:\R^2\to \R_+$ given by
\[
		g(x,y) =\beta|\bx+x\bu+y\bv|^2 
	         = \beta \left( |\bx|^2 + 2x (\bx\cdotp\bu) + 2y (\bx\,\cdotp \bv) + x^2+y^2\right).
\]
We have that $\Delta g = 4\beta$, $g_x(0,0)=2\beta\, \bx\cdotp\bu$, 
$g_y(0,0)=2\beta\, \bx\cdotp\bv$, and hence (since $\beta|\bx|^2\le 1$)
\begin{equation}\label{eq:nablag}
		|\nabla g(0,0)|^2 = 4\beta^2 \left(|\bx\cdotp\bu|^2+|\bx\cdotp\bv|^2\right) 
		\le 4\beta^2 |\bx|^2 \le 4\beta.
\end{equation}
Applying the formula \eqref{eq:Laplacehg} with the increasing function $\chi=\log \theta$ gives
\[
	\tr_\Lambda \Hess_h(\bx) = 
	\Delta (\log\theta\circ g)(0,0) \ge \theta''(\beta|\bx|^2) |\nabla g(0,0)|^2 \ge -4A\beta.
\]
This is the estimate \eqref{eq:loweresth}.
\end{proof}

%
%
\begin{lemma}\label{lem:lowerbound}
Let $\Omega$ be a domain in $\R^n$ $(n\ge 3)$, $\bp\in \R^n$, and $r>0$. 
Assume that $u:\Omega \to [-\infty,0)$ is an MPSH function of class
$\Cscr^2$ such that for some $c>0$ we have 
\begin{equation}\label{eq:estu}
	\tr_\Lambda \Hess_u(\bx)\ge c>0
	\ \ \text{for all $\bx\in \Omega \cap \B(\bp,2r)$ and $\Lambda\in G_2(\R^n)$}. 
\end{equation}
Let the constant $A>0$ satisfy \eqref{eq:A}. Then we have that
\begin{equation}\label{eq:estg1}
	g_\Omega(\bx,\bu) \ge \frac{1}{r}\, \E^{2Au(\bx)/cr^2} |\bu| 
	\ \ \ \text{for all $\bx\in \B(\bp,r)\cap\Omega$ and $\bu\in\R^n$}. 
\end{equation}
\end{lemma}

\begin{proof}
Let the function $\theta$ 
be as in \eqref{eq:theta}. 
Fix a point $\bx\in \B(\bp,r)\cap\Omega$.
Given a constant $\lambda>0$, we consider the smooth function 
\[ 
	\Psi(\by) = \theta\left( r^{-2}|\by-\bx|^2\right) \E^{\lambda u(\by)},\quad\ \by\in \Omega.
\] 
Note that $0\le \Psi\le 1$, $\Psi(\bx)=0$, and $\log \Psi$ is MPSH except perhaps on the
spherical shell 
\[	
	S_r=\left\{\by\in \R^n:  \frac{r}{\sqrt 2} \le |\by-\bx| \le r \right\} \subset \B(\bp,2r)
\]
intersected with $\Omega$.
On $S_r \cap\Omega$ we have by Lemma \ref{lem:estimate1} and the assumption \eqref{eq:estu} that 
\begin{equation}\label{eq:estlogPsi}
	\tr_\Lambda \Hess_{\log \Psi} \ge -\frac{4A}{r^2} + c \lambda,
	\quad \ \Lambda\in  G_2(\R^n).
\end{equation}
Choosing $\lambda=\frac{4A}{cr^2}$ we ensure that $\log \Psi$ is MPSH on $\Omega$.
For this value of $\lambda$, the function $\Psi$ is a candidate
for computing $\Fcal_\Omega(\bx,\Lambda)$ \eqref{eq:F}. A calculation gives 
for all $\Lambda\in  G_2(\R^n)$ that 
\begin{equation}\label{eq:trHessPsi}
	\tr_\Lambda \Hess_{\Psi}(\bx) 
	= \frac{4}{r^2} \, \E^{\lambda u(\bx)} 
	= \frac{4}{r^2} \, \E^{4Au(\bx)/cr^2}.
\end{equation}
Taking into account \eqref{eq:F} and Proposition \ref{prop:FM} it follows that
\[
	\Mcal_\Omega(\bx,\Lambda) \ge \Fcal_\Omega(\bx,\Lambda) 
	\ge \frac12 \sqrt{\tr_\Lambda \Hess_{\Psi}(\bx)}
	= \frac{1}{r}\, \E^{2Au(\bx)/cr^2}.
\]
From this and the property \eqref{eq:gM} of the metric $g_\Omega$ we clearly get \eqref{eq:estg1}.
\end{proof}

%
%
%
%
The following result is an analogue of \cite[Theorem 3, Corollary 5, and Proposition 6]{Sibony1981}.

\begin{theorem}\label{th:mpsh}
Let $\Omega$ be a domain in $\R^n$ for $n\ge 3$.
\begin{enumerate}[\rm (a)] 
\item If there is a negative MPSH function on $\Omega$ which is smooth strongly MPSH on a neighbourhood 
of a point $\bp\in \Omega$, then $\Omega$ is hyperbolic at $\bp$ (see Definition \ref{def:Rhyperbolic}).
\item If $u$ is a negative $\Cscr^2$ function on a domain $\Omega\subset \R^n$ such that 
\begin{equation}\label{eq:estu2}
	\tr_\Lambda \Hess_u(\bx)\ge c>0\ \ \text{for all $\bx\in \Omega$ and $\Lambda\in G_2(\R^n)$}
\end{equation}
and  $A>0$ is as in \eqref{eq:A}, then the minimal metric $g_\Omega$ satisfies the estimate 
\begin{equation}\label{eq:estg}
	g_\Omega(\bx, \bv) \ge \sqrt{\frac{c}{4A\E }} \frac{|\bv|}{\sqrt{|u(\bx)|}}
	\ \ \text{for all $\bx\in \Omega$ and $\bv\in\R^n$}. 
\end{equation}
\item A domain with a bounded strongly MPSH function is hyperbolic.
\item If $\Omega$ admits a continuous MPSH exhaustion function $u:\Omega\to [-\infty,0)$
(such a domain is called hyperconvex, see Definition \ref{def:hyperconvex}),
then $\Omega$ is hyperbolic.
\end{enumerate}
\end{theorem}

\begin{proof}
Part (a) is an immediate consequence of Lemma \ref{lem:lowerbound} and Theorem \ref{th:hyperbolic}.

Part (b) follows from \eqref{eq:estg1} by choosing $\bx=\bp\in\Omega$ and $r^2= 4A|u(\bx)|/c$.

If the assumption in (c) holds then the domain is hyperbolic at every point by (a), and hence it is hyperbolic
by the implication (iii)\ $\Longrightarrow$\ (iv) in Theorem \ref{th:hyperbolic}.

To prove (d), assume first that $u:\Omega\to (-\infty,0)$ is an MPSH exhaustion function of class $\Cscr^2$.
Pick a number $t_0<0$. The set $K=\{\bx \in\Omega: u(\bx)\le t_0\}$ is compact since $u$ is an exhaustion. 
Choose a relatively compact neighbourhood $V\Subset \Omega$ of $K$ and set 
\[
	\mu=\inf_{\bx\in bV} u(\bx) > t_0,\qquad c=\max_{\bx\in bV}\left(u(\bx)+|\bx|^2\right).
\]
Pick an increasing convex function $h:\R\to\R$ such that 
\[
	h(t)=t\ \ \text{for $t\le t_0$},\quad\  h(\mu)>c.
\]
Define the function $\psi:\Omega\to\R$ by
\[	
	\psi(\bx)= \begin{cases} 
			\max\{u(\bx)+|\bx|^2,h(u(\bx))\}, & \text{if $\bx\in V$}; \\
			h(u(\bx)), 					   & \text{if $\bx\in \Omega\setminus V$}.
			\end{cases}
\]
Then, $\psi$ is a bounded MPSH exhaustion function on $\Omega$ such that $\psi(\bx)=u(\bx)+|\bx|^2$
for $\bx$ in a neighbourhood of $K$, so it is strongly MPSH there. It follows that $\Omega$ is
hyperbolic at every point of $K$. Letting $t_0\to 1$, the compact set $K=\{u\le t_0\}$ increases to $\Omega$,
so $\Omega$ is hyperbolic by Theorem \ref{th:hyperbolic} (the equivalence 
(iii) $\Leftrightarrow$ (iv)). This proves (d) if $u$ is smooth.

Suppose now that $u:\Omega\to [-\infty,0)$ is a continuous MPSH exhaustion function.
Choose $t_0<0$ close enough to $0$ such that the compact set $K=\{u\le t_0\}$ 
contains the level set $\{u=-\infty\}$ in its interior. 
Taking the convolution of $u$ with a smooth radially symmetric approximate identity
gives a smooth MPSH function $v\ge u$ on a neighbourhood of $K$.  
(See \cite[Proposition 8.1.6]{AlarconForstnericLopez2021}. Note that 
smoothing also applies if $u$ assumes the value $-\infty$, 
the reason being that an MPSH function is subharmonic in the usual
sense, hence locally integrable.) The same procedure as in the special
case above, using $v(\bx)+|\bx|^2$ as the first item under $\max$ in the definition of $\psi$, 
yields another bounded MPSH exhaustion $\tilde u$ on $\Omega$ which is strongly MPSH in a 
neighbourhood of $K$, so $\Omega$ is hyperbolic there. Letting $t_0\to 0$ completes the proof.
\end{proof}

Here are examples of unbounded domains satisfying conditions in Theorem \ref{th:mpsh} (c), (d). 

%
%
\begin{example}\label{ex:Omegaab}
Consider the function $u(x,y,z)=x^2 + y^2 - az^2$ on $\R^3$. 
Its Hessian matrix has eigenvalues $2,2,-2a$, so $u$ is strongly MPSH and satisfies  
condition \eqref{eq:estu2} when $a<1$. For such $a$ and any $b\in\R$ the unbounded 
strongly minimally convex domain
\[
	\Omega_{a,b}=\{u<0\}=\{(x,y,z)\in\R^3: x^2 + y^2 < az^2 + b\}
\]
is hyperbolic by Theorem \ref{th:mpsh} (c).  
Part (b) of the same theorem shows that 
$g_\Omega(\bx,\bv)\ge const \frac{|\bv|}{\sqrt{|u(x,y,z)|}}$ 
for all $(x,y,z)\in\Omega$ and $\bv\in\R^n$.  Using this estimate and the fact that 
$\Omega_{a,b}$ is strongly minimally convex, it can be seen that $\Omega_{a,b}$ 
is in fact complete hyperbolic (see Corollary \ref{cor:Omegaab}).
\qed\end{example}


%
%
\begin{example}\label{ex:hyperconvex}
Let $h:\R\to (-\infty,0)$ be a smooth function such that $\lim_{|x|\to\infty}h(x)=0$ and
$|h''(x)|\le c<+\infty$ for all $x\in\R$. An example is $h(x)=-1/(1+x^2)$. Consider the function
\[
	u(x,y,z)=h(x)+a(y^2+z^2),\quad \ (x,y,z)\in\R^3
\]
for some $a>0$. The Hessian matrix of $u$ at $(x,y,z)$ is the diagonal matrix with the eigenvalues 
$h''(x),2a,2a$. Since $|h''(x)|\le c$, the sum of any two eigenvalues is nonnegative
if $a\ge c/2$. For such $a$, $u$ is a negative MPSH exhaustion function on 
the unbounded domain 
\[
	\Omega=\{u<0\}=\{(x,y,z)\in\R^3: y^2+z^2 < -h(x)/a\}
\]
containing the line $\R\times \{(0,0)\}$.
Hence, $\Omega$ is hyperbolic by Theorem \ref{th:mpsh} (d).
\qed\end{example}

%
%
%
%
\section{Localization theorems for the minimal pseudometric}\label{sec:localization}

Given domains $\Omega_0\subset \Omega\subset \R^n$, their minimal pseudometrics 
satisfy $g_{\Omega}(\bx,\bv)\le g_{\Omega_0}(\bx,\bv)$ for every $\bx\in \Omega_0$
and $\bv\in \R^n$; see \eqref{eq:comparison}. 
Assuming that a point $\bp$ is contained in the relative interior of 
$b\Omega\cap b\Omega_0$, it is often possible to obtain the inverse inequality 
up to a positive multiplicative constant. 
The motivation is that it is easier to estimate the metric from below
on small subdomains. There are several known estimates of this type for 
biholomorphically invariant metrics; see the papers 
\cite{Sibony1981,ForstnericRosay1987,Berteloot1994,Gaussier1999,Nikolov2002,IvashkovichRosay2004,DiederichFornaessWold2014,FornaessNikolov2021}, among others. 
We shall prove the following result concerning localization of the minimal pseudometric.

%
%
\begin{theorem}\label{th:loc1}
Let $\Omega$ be a domain in $\R^n$, $n\ge 3$. Assume that for some point 
$\bp\in b\Omega$ there exist a neighbourhood $U\subset \R^n$ of $\bp$ 
and a continuous function $u:\overline \Omega\cap U \to (-\infty, 0]$ such that $u(\bp)=0$, 
$u<0$ on $\overline\Omega\cap U \setminus \{\bp\}$, $u$ is MPSH on $\Omega\cap U$
(see Definition \ref{def:mpsh}), and
\begin{equation}\label{eq:assloc1}
	|u(\bx)| \le c_0 |\bx-\bp|,\quad\ \bx\in \Omega\cap U
\end{equation}
holds for some $c_0>0$. Given $r_0>0$, there is a constant $c>0$ such that 
\begin{equation}\label{eq:localestimateg}
	g_\Omega(\bx,\bv) \ge (1-c|\bx -\bp|)\, g_{\Omega\cap \B(\bp,r_0)}(\bx,\bv),\quad\
	 \bx\in  \Omega\cap \B(\bp,r_0),\ \bv\in\R^n.
\end{equation}
\end{theorem}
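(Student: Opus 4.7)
The plan is to prove the inequality by constructing, for every $f\in\CH(\D,\Omega)$ with $f(0)=\bx$, a modified conformal harmonic disc $\tilde f\in\CH(\D,\Omega\cap\B(\bp,r_0))$ with $\tilde f(0)=\bx$ and $\|d\tilde f_0\|\ge(1-c|\bx-\bp|)\|df_0\|$ for a constant $c$ independent of $f$. Taking infima in the definition \eqref{eq:g1} of $g_\Omega$ and $g_{\Omega\cap\B(\bp,r_0)}$ then yields the stated inequality. Without loss of generality assume $\overline{\B(\bp,2r_0)}\subset U$, and set
\[
m:=-\max\{u(\by):\by\in\overline\Omega\cap b\B(\bp,r_0)\}>0,
\]
which is strictly positive by continuity of $u$ and the fact that $u<0$ on the compact set $\overline\Omega\cap U\setminus\{\bp\}$.

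For the construction, if $f(\D)\subset\B(\bp,r_0)$ take $\tilde f:=f$. Otherwise let
\[
\rho_*:=\inf\{|z|:z\in\D,\ f(z)\in b\B(\bp,r_0)\}\in(0,1);
\]
by continuity and connectedness $f(\rho_*\D)\subset\B(\bp,r_0)\cap\Omega$, and there exists $z_*\in\D$ with $|z_*|=\rho_*$ and $f(z_*)\in b\B(\bp,r_0)\cap\overline\Omega$. Put $\tilde f(z):=f(\rho_* z)$, a conformal harmonic disc $\D\to\Omega\cap\B(\bp,r_0)$ with $\tilde f(0)=\bx$ and $\|d\tilde f_0\|=\rho_*\|df_0\|$. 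The theorem thus reduces to the quantitative Schwarz-type bound $\rho_*\ge 1-c|\bx-\bp|$. (A sharper variant replaces $\rho_*\D$ with the connected component $V\ni 0$ of $f^{-1}(\B(\bp,r_0))$ and $\tilde f$ with $f\circ F$ for $F:\D\to V$ a uniformization, so the problem becomes the lower bound $r(V,0)=F'(0)\ge 1-c|\bx-\bp|$ on the conformal radius; the arguments below adapt.)

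To establish $\rho_*\ge 1-c|\bx-\bp|$ I would invoke the MPSH hypothesis. The composition $\varphi:=u\circ\tilde f$ is subharmonic on $\D$, continuous on $\overline\D$, $\varphi\le 0$, $\varphi(0)=u(\bx)\ge -c_0|\bx-\bp|$ by the Lipschitz hypothesis, and $\varphi(\zeta_0)\le -m$ at the boundary point $\zeta_0:=z_*/\rho_*\in b\D$. A quantitative Hopf-type comparison for the negative subharmonic function $\varphi$ would then yield $-\varphi(0)\ge c_1(1-\rho_*)$ for some $c_1>0$, and combined with $-\varphi(0)\le c_0|\bx-\bp|$ this gives the claim. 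Equivalently, one works directly on $V$: the Perron maximum principle applied to $v:=u\circ f$ on $V$ with harmonic measure $\omega_V(0,\cdot)$ gives $\omega_V(0,\partial V\cap\D)\le (c_0/m)|\bx-\bp|$, which via the Green's-function identity
\[
\log r(V,0)=\int_{\partial V\cap\D}\log|\zeta|\,d\omega_V(0,\zeta)
\]
(the $\partial V\cap b\D$-contribution vanishes since $|\zeta|=1$ there) translates into the conformal-radius lower bound once one controls the integrand.

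The main obstacle will be this last quantitative step. A single boundary value $\varphi(\zeta_0)\le -m$ (respectively a single point of $\partial V\cap\D$) does not by itself force $\varphi(0)$ to be bounded away from $0$; one needs auxiliary uniform control on the modulus of continuity of $\varphi$ near $\zeta_0$, or equivalently a uniform lower bound $|\zeta|\ge\rho_0>0$ on $\partial V\cap\D$. I expect this to follow by restricting attention, via the infimum in the definition of $g_\Omega$, to near-extremal discs whose derivative at $0$ is bounded by the locally positive lower bound on the minimal metric (hyperbolicity at $\bp$, Theorem~\ref{th:mpsh}(a)), together with interior harmonic gradient estimates for the bounded map $f|_V:V\to\overline\B(\bp,r_0)\cap\overline\Omega$.
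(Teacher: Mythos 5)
Your reduction is correct and matches the paper's: it suffices to establish the quantitative Schwarz statement that every $f\in\CH(\D,\Omega)$ with $f(0)=\bx$ maps the disc $\{|z|\le 1-c|\bx-\bp|\}$ into $\B(\bp,r_0)$ (equivalently, your $\rho_*\ge 1-c|\bx-\bp|$), after which rescaling the disc gives the metric inequality. You also correctly use the peak function: with $m$ as you define it, the maximum principle for the negative subharmonic function $u\circ f$ gives $\omega_V(0,\partial V\cap\D)\le (c_0/m)\,|\bx-\bp|$, and the conformal-radius identity $\log r(V,0)=\int_{\partial V\cap\D}\log|\zeta|\,d\omega_V(0,\zeta)$ is correct.

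However, the gap you identify at the end is real, and your proposed fix does not close it. A single boundary point with $\varphi(\zeta_0)\le -m$ carries no quantitative information through a Hopf-type lemma, and in the conformal-radius formulation the integrand $\log(1/|\zeta|)$ is unbounded as $\zeta\to 0$, so the small-harmonic-measure bound on $\partial V\cap\D$ alone is useless without a uniform lower bound $|\zeta|\ge\rho_0>0$ there. Your suggestion to obtain such a bound from hyperbolicity at $\bp$ and near-extremal discs is circular: the lower bound on $g_\Omega$ near $\bp$ is precisely a consequence of the localization you are trying to prove, and Theorem~\ref{th:mpsh}(a) is proved via Lemma~\ref{lem:lowerbound}, which requires the peak function to be globally defined and strongly MPSH with a uniform Hessian bound near $\bx$, neither of which is part of the hypotheses here.

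The missing ingredient is a \emph{second} MPSH function, $\log|\bx-\bp|$ (Lemma~\ref{lem:log}(a)). After normalizing $\bp=\zero$ and $\Omega\subset\B^n$, the function $\log|f|$ is negative and subharmonic on $\D$, bounded, with boundary values in $L^\infty(\T)$. Define $K=\{t\in[0,2\pi]:|f(\E^{\imath t})|\ge r_0^2\}$; the peak function gives $|K|\lesssim |f(0)|$ exactly by your harmonic-measure argument, now applied on all of $\D$ with the a.e.\ boundary values of $u\circ f$. Off $K$ the boundary values of $\log|f|$ are $\le 2\log r_0<0$, so $\log|f|$ is dominated by the harmonic function $\lambda$ with boundary values $0$ on $K$ and $2\log r_0$ on $\T\setminus K$; the Harnack-type estimate (Lemma~\ref{lem:Harnack}) for the harmonic measure of $K$ shows that $\lambda(z)\le\log r_0$, hence $|f(z)|\le r_0$, for $|z|\le 1-4|K|\ge 1-c|f(0)|$. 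In other words, one should not restrict $f$ to a component of $f^{-1}(\B(\bp,r_0))$ and estimate the conformal radius of that component; one must instead work with the boundary values of $f$ on the full circle $\T$ and let the subharmonic function $\log|f-\bp|$ do the geometric localization while the peak function $u$ only controls the measure of the bad boundary set. Finally, note that the paper handles the unbounded case by replacing $\log|\cdot|$ with a negative MPSH function on all of $\Omega$ agreeing with $\log|\cdot|+O(1)$ near $\bp$ and with $c\,u$ far away; without such a global object your second subharmonic function is not available.
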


This result is close in spirit to \cite[Theorem 2.1]{ForstnericRosay1987}; see also Theorem \ref{th:loc2}.

%
%
\begin{remark}\label{rem:localization}
(A) The estimate \eqref{eq:localestimateg} 
says in particular that $\Omega$ is hyperbolic at every point close enough to $\bp$. 
An analogous result for the Kobayashi metric is \cite[Theorem 2.1]{ForstnericRosay1987}. 
Our proof gives a stronger conclusion under weaker hypotheses also in that situation: 
the domain $\Omega\subset \C^n$ need not be bounded, the local peak function $u$ 
may be plurisubharmonic instead of holomorphic, and there is no condition 
on the upper bound of $|\bx-\bp|$ in terms of $|u(\bx)|$.

(B) Our proof of \eqref{eq:localestimateg} can be adjusted to the situation
studied by Ivashkovich and Rosay in \cite[Sect.\ 2a]{IvashkovichRosay2004}. 
The upshot is that, under their hypotheses, the quotient of the infinitesimal Kobayashi metrics 
of a domain $\Omega$ and the subdomain $\Omega\cap \,\B(\bp,r_0)$ 
in an almost complex manifold $X$ approaches $1$ linearly in terms of $\dist(\bx,\bp)$.
(Ivashkovich and Rosay did not provide a quantitative estimate of the rate of convergence.)  
\qed \end{remark}

In the proof of Theorem \ref{th:loc1} we shall need the following lemma.

%
%
\begin{lemma} \label{lem:Harnack}
Let $K$ be a nonempty compact set in the circle $\T=b\D =\{|z|=1\}$, 
and let $v(z)$ be the positive harmonic function on $\D$ whose boundary values 
on $\T$ agree with the characteristic function of $K$. 
Let $|K|$ denote the normalized Lebesgue measure of $K$.
Then, for any $\mu \in (0,1)$ we have that
\begin{equation}\label{eq:lessthanc}
	v(z)\le \mu \ \ \text{for}\ \ |z|\le 1-\frac{2}{\mu}|K|. 
\end{equation}
\end{lemma}

\begin{proof}
A positive harmonic function $v$ on $\D$ satisfies Harnack's inequality
\[
	\frac{1-r}{1+r} v(0) \le v(r \E^{\imath t}) \le \frac{1+r}{1-r} v(0),\quad 0\le r<1,\ t\in\R.
\]
In our case we have that $v(0)=|K|$, so 
\[
	v(r \E^{\imath t}) \le  \frac{1+r}{1-r} v(0) \le \frac{2}{1-r} |K| \le \mu
\]
where the last inequality holds for $r\le 1-\frac{2}{\mu}|K|$. This gives \eqref{eq:lessthanc}.
\end{proof}

%
%
\begin{proof}[Proof of Theorem \ref{th:loc1}]
The estimate \eqref{eq:localestimateg} clearly follows from the following.

\begin{lemma} \label{lem:localization}
{\rm (Assumptions as in Theorem \ref{th:loc1}.)} 
Given $r_0>0$, there is $c=c(r_0)>0$ such that for every conformal harmonic disc $f:\D\to \Omega$ 
we have that 
\begin{equation}\label{eq:estloc1}
	|f(z)-\bp| \le r_0\ \ \text{for all}\ \ |z|\le 1-c|f(0)-\bp|. 
\end{equation}
\end{lemma}

Setting $\epsilon = |f(0)-\bp|$, condition \eqref{eq:estloc1} says that $f$ maps the 
disc of radius $r=1-c\epsilon$ into the ball $\B(\bp,r_0)$. It follows that if 
$(f_k)_{k\in \N}\subset \CH(\D,\Omega)$ is such that $\lim_{k\to\infty}f_k(0)=\bp$, 
then the sequence $f_k$ converges to $\bp$ uniformly on compacts in $\D$.

It remains to prove the lemma. 
Replacing the function $u$ in Theorem \ref{th:loc1} 
by $\max\{u,-c_1\}$ for a suitably chosen $c_1>0$, 
we may assume that $u$ is defined on $\overline \Omega$ and MPSH 
on $\Omega$, and \eqref{eq:assloc1} holds for all $\bx\in \Omega$.

For convenience of exposition, we first consider the case when $\Omega$ is bounded.
We may assume that $\bp=\zero$ and 
$\Omega$ is contained in the unit ball $\B^n$ centred at the origin.
It suffices to prove \eqref{eq:estloc1} for small numbers $r_0\in (0,1)$ and for discs $f\in \CH(\D,\Omega)$ 
with centres $f(0)$ close to $\zero$. The assumptions on $u$ imply that
\begin{equation}\label{eq:epsilon0}
	-\epsilon_0 := \sup \bigl\{u(\bx): \bx\in \overline\Omega,\ |\bx|\ge r_0^2\bigr\} <0.
\end{equation}
Fix $f\in \CH(\D,\Omega)$ and set $\epsilon = -u(f(0))>0$. Since $\bp=\zero$, \eqref{eq:assloc1} gives
\begin{equation}\label{eq:est2loc1}
	\epsilon \le c_0 |f(0)|. 
\end{equation}
Since $f:\D\to \R^n$ is a bounded harmonic map, it has a nontangential
limit at almost every point of the circle $\T=b\D$, the boundary map is of class $L^\infty(\T)$,
and $f$ is the Poisson integral of its boundary function. Consider the measurable set
\begin{equation}\label{eq:K}
	K=\bigl\{t\in [0,2\pi]: |f(\E^{\imath t})|\ge r_0^2\big\}.
\end{equation}
The choice of $\epsilon_0$ in \eqref{eq:epsilon0} ensures that  
\[
	t \in K\ \Longrightarrow \ u\circ f(\E^{\imath t}) \le -\epsilon_0.
\]
Since the function $u\circ f$ is negative subharmonic on $\D$ (see Proposition \ref{prop:mpsh}), it follows that 
\[
	-\epsilon = u(f(0)) \le \int_0^{2\pi} u\circ f(\E^{\imath t}) \frac{dt}{2\pi} 
	\le \int_K  u\circ f(\E^{\imath t}) \frac{dt}{2\pi}
	\le -\epsilon_0 |K|.
\]
From this and \eqref{eq:est2loc1} we obtain the estimate
\begin{equation}\label{eq:estK}
	|K| \le \frac{\epsilon}{\epsilon_0} \le \frac{c_0 |f(0)|}{\epsilon_0}.
\end{equation}

By the assumption made at the beginning of the proof, the function $\log|f|<0$ is negative
on $\D$, and it is subharmonic on $\D$ by Lemma \ref{lem:log} (a). 
Hence, $\log|f(\E^{\imath t})|\le 0$ for all $t$, and  
\[
	\log|f(\E^{\imath t})| < \log r_0^2=2\log r_0<0\ \ \text{for}\ \ t \in [0,2\pi]\setminus K
\]
in view of the definition of the set $K$ \eqref{eq:K}. 
Let $\lambda:\D\to (-\infty,0)$ be the harmonic function whose 
a.e.\ boundary values equal $0$ on $K$ and $2\log r_0$ on $\T\setminus K$.
Then, $\log|f|\le \lambda$ on $\D$ by subharmonicity of $\log|f|$. Note that
$
	\lambda=2\log r_0 + 2|\log r_0| v_K,
$
where $v_K:\D\to (0,1]$ is the Poisson extension of the characteristic function $\chi_K$ of $K$
(the harmonic measure of $K$).
By Lemma \ref{lem:Harnack}, applied to $v_K$ with $\mu=1/2$, we infer that 
$v_K\le 1/2$ on the disc $|z|\le 1-4|K|$. By \eqref{eq:estK}, this contains the disc
$|z|\le 1-c|f(0)|$ with $c=4c_0/\epsilon_0$. For such $z$ we get
\[
	\log|f(z)| \le \lambda(z) \le 2\log r_0 +  |\log r_0| = \log r_0
\]
and hence $|f(z)| \le r_0$. This proves \eqref{eq:estloc1} (and hence the theorem) 
if $\Omega$ is bounded.

If $\Omega$ is unbounded, we replace the function $\log|\bx|$ in the last step of the proof 
by a negative MPSH function $\zeta:\Omega\to (-\infty,0)$ which equals $\log|\bx|+O(1)$ on 
$\Omega\cap U'$ 
for some neighbourhood $U'\subset \R^n$ of the origin, and it agrees with $cu$ for some $c>0$
outside a larger neighbourhood of the origin. The construction of $\zeta$ is elementary; 
see the construction of the function $\lambda^\sharp$
in \cite[pp.\ 2410--2411]{IvashkovichRosay2004}. Assuming that the conformal 
harmonic disc $f:\D\to\Omega$ is bounded, the proof may be completed as before by considering 
the negative subharmonic function $\zeta\circ f:\D\to (-\infty,0)$. The constants in the proof need 
a minor adjustment depending only on the 
size of the bounded difference $|\zeta(\bx)-\log|\bx||$ on $\bx\in \Omega\cap U'$, 
but not on the disc $f$. Hence, the conclusion also holds if $f$ is unbounded, replacing it by 
the bounded discs $\D\ni z\mapsto f(r'z)$ for $r'<1$ and letting $r'\to 1$.
\end{proof}
%
%

One is often interested in a lower estimate on $g_\Omega(\bx,\cdotp)$ in terms
of the boundary distance $\dist(\bx,b\Omega)$.
Such an estimate can be obtained from the proof of Theorem \ref{th:loc1},  
provided that there is a family of local negative MPSH peaking functions as in 
the following theorem. 

%
%
\begin{theorem}\label{th:loc2}
Let $\Omega_1\subset \Omega$ be domains in $\R^n$ $(n\ge 3)$ with $\overline \Omega_1$ compact, 
and let $\omega \subset b\Omega \cap b\Omega_1$ be such that for some $\delta>0$ we have
$\B(\bp,\delta)\cap \Omega \subset \Omega_1$ for all $\bp\in \omega$. 
Assume that for every $\bp\in\omega$ there exists a continuous
function $u_\bp:\overline \Omega_1 \to (-\infty, 0]$, which is MPSH on $\Omega_1$, 
such that $u_\bp(\bp)=0$, $u_\bp<0$ on $\overline\Omega_1 \setminus \{\bp\}$, and 
\begin{equation}\label{eq:assloc2}
	c_0|u_\bp(\bx)| \le |\bx-\bp| \le \phi(|u_\bp(\bx)|),\quad\ \bx\in \Omega_1,\ \bp\in\omega
\end{equation}
for some $c_0>0$ and a continuous increasing function $\phi:\R_+\to\R_+$
with $\phi(0)=0$. (The constant $c_0$ and the function $\phi$ are independent of $\bp\in\omega$.)
Then there is $c>0$ such that
\begin{equation}\label{eq:localestimateg2}
	g_\Omega(\bx,\bv) \ge (1-c\,\dist(\bx,\omega))\, g_{\Omega_1}(\bx,\bv),\quad\
	 \bx\in  \Omega_1,\ \bv\in\R^n.
\end{equation}
In particular, $\Omega$ is hyperbolic at every point close enough to $\omega$.
\end{theorem}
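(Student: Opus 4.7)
I will reduce Theorem \ref{th:loc2} to a uniform application of Theorem \ref{th:loc1}. Fix $r_0 \in (0,\delta)$. For each $\bp \in \omega$, set $U = \B(\bp,\delta)$ and consider $u := u_\bp|_{\overline\Omega \cap U}$. The inclusion $\B(\bp,\delta) \cap \Omega \subset \Omega_1$ forces $\overline\Omega \cap \B(\bp,\delta) \subset \overline{\Omega_1}$ by passing to closures, so $u$ is continuous on $\overline\Omega \cap U$, is MPSH on $\Omega \cap U = \Omega_1 \cap U$, satisfies $u(\bp) = 0$, and is strictly negative on $(\overline\Omega \cap U)\setminus\{\bp\}$. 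The left inequality in \eqref{eq:assloc2} gives $|u_\bp(\bx)| \le |\bx - \bp|/c_0$, which is hypothesis \eqref{eq:assloc1} with constant $1/c_0$. Theorem \ref{th:loc1} therefore produces $c(\bp) > 0$ such that
\[
g_\Omega(\bx,\bv) \ge (1 - c(\bp)|\bx - \bp|)\, g_{\Omega \cap \B(\bp,r_0)}(\bx,\bv),\quad \bx \in \Omega \cap \B(\bp,r_0),\ \bv \in \R^n.
\]
Since $r_0 < \delta$, we have $\Omega \cap \B(\bp,r_0) \subset \Omega_1$, so by the comparison principle \eqref{eq:comparison}, $g_{\Omega \cap \B(\bp,r_0)} \ge g_{\Omega_1}$, and the same inequality holds with $g_{\Omega_1}$ on the right.

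The decisive step is to show $\sup_{\bp \in \omega} c(\bp) < +\infty$. Tracing through the proof of Theorem \ref{th:loc1}, the constant $c(\bp)$ depends essentially on three quantities: (a) the Lipschitz constant $1/c_0$; (b) the peak modulus $\epsilon_0(\bp,r_0) := -\sup\{u_\bp(\bx) : \bx \in \overline{\Omega_1},\ |\bx - \bp| \ge r_0\}$; and (c) the truncation threshold $c_1(\bp)$ used to extend $u_\bp$ from $\overline\Omega \cap U$ to a globally MPSH function on $\overline\Omega$ via $\max\{u_\bp, -c_1\}$. The right inequality in \eqref{eq:assloc2} yields $\epsilon_0(\bp, r_0) \ge \phi^{-1}(r_0) > 0$ uniformly in $\bp \in \omega$, where $\phi^{-1}$ is interpreted as the pseudoinverse of the continuous, strictly increasing $\phi$ with $\phi(0)=0$. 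The same inequality shows that $u_\bp \le -\phi^{-1}(\delta/2)$ on $\overline{\Omega_1} \cap \{|\bx - \bp| \ge \delta/2\}$, so the truncation threshold can be chosen as $c_1 := \phi^{-1}(\delta/2)$, again uniform in $\bp$. Consequently all dependencies on $\bp$ are controlled by the $\bp$-independent data $c_0$, $\phi$, and $\delta$, and a single constant $c$ works for all $\bp \in \omega$.

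To conclude, let $\bx \in \Omega_1$ be arbitrary, and pick $\bp \in \omega$ with $|\bx - \bp|$ equal to (or, if the infimum is not attained, arbitrarily close to) $\dist(\bx, \omega)$. If $\dist(\bx, \omega) < r_0$, then $\bx \in \Omega \cap \B(\bp, r_0)$ and the uniform local estimate yields \eqref{eq:localestimateg2}, using a limiting argument in $\bp$ if necessary. If $\dist(\bx, \omega) \ge r_0$, enlarging $c$ to $\max\{c, 1/r_0\}$ ensures $1 - c\,\dist(\bx,\omega) \le 0$, so \eqref{eq:localestimateg2} is trivial. Hyperbolicity of $\Omega$ at points close to $\omega$ then follows from Theorem \ref{th:hyperbolic}: $\Omega_1$ is bounded and hence hyperbolic by Corollary \ref{cor:bounded}, so $g_{\Omega_1}$ is bounded below by a positive multiple of the Euclidean norm on a neighbourhood of any point (Corollary \ref{cor:comparablemetric}), while the factor $1 - c\,\dist(\bx,\omega)$ is bounded away from $0$ as $\bx$ ranges over a neighbourhood of $\omega$ at positive distance from it. The main obstacle is the uniformity argument in the second paragraph, which requires a careful re-examination of every constant appearing in the proof of Theorem \ref{th:loc1} to verify that each is bounded in terms of the $\bp$-independent data $c_0$, $\phi$, and $\delta$.
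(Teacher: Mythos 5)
Your proof is correct and follows essentially the same route as the paper's: apply Theorem \ref{th:loc1} at each $\bp\in\omega$ and observe that the right-hand inequality in \eqref{eq:assloc2} gives a $\bp$-independent lower bound on the peak modulus $\epsilon_0$ from \eqref{eq:epsilon0} (the paper takes $\epsilon_0=\phi^{-1}(r_0^2)$, matching the $|\bx|\ge r_0^2$ threshold, while you wrote $\phi^{-1}(r_0)$ --- immaterially different), which makes the constant $c(\bp)$ uniformly bounded. The paper's own proof is equally terse and explicitly leaves the remaining constant-tracking (including the truncation and the unbounded case of Theorem \ref{th:loc1}) to the reader, so your level of detail matches.
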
 
 
Theorem \ref{th:loc2} follows from the proof of Theorem \ref{th:loc1} by observing that the second inequality
in \eqref{eq:assloc2} ensures that for any given $r_0>0$, the number $\epsilon_0>0$
in \eqref{eq:epsilon0} (with $u$ replaced by $u_\bp$) can be chosen independent of $\bp\in\omega$.
In fact, the smallest number $\epsilon_0>0$ such that $\phi(\epsilon_0)=r_0^2$
satisfies \eqref{eq:epsilon0}. This implies that the constant $c=c(r_0)>0$ in 
\eqref{eq:estloc1} can be chosen independent of $\bp\in\omega$.
We leave further details to the reader.

%
%
%
%
\section{Bounded strongly minimally convex domains are complete hyperbolic}\label{sec:mpsh}

A domain $\Omega\subset \R^n$ $(n\ge 3)$ with $\Cscr^2$ boundary is 
{\em strongly minimally convex} if it admits a $\Cscr^2$ defining function $u:U\to \R$ on a 
neighbourhood $U\subset\R^n$ of $\overline \Omega$
(i.e., $\Omega=\{\bx\in U:u(x)<0\}$ and $du\ne 0$ on $b\Omega=\{u=0\}$)
which is strongly MPSH on a neighbourhood $V\subset U$ of $b\Omega$.
If $\overline \Omega$ is compact, then $u$ can be chosen strongly MPSH on 
a neighbourhood of $\overline \Omega$.
(See \cite[Definition 8.1.18 and Lemma 8.1.19]{AlarconForstnericLopez2021}.) 
Since $|u(\bx)|$ is comparable to $\dist(\bx,b\Omega)$ for $\bx$ sufficiently close 
to $b\Omega$, the following is a corollary to Theorem \ref{th:mpsh} (b).

%
%
\begin{corollary}\label{cor:spsc} 
For every bounded strongly minimally convex domain $\Omega$ in $\R^n,\ n\ge 3,$ 
there is a constant $C>0$ such that 
\begin{equation}\label{eq:estg2}
	g_\Omega(\bx, \bv) \ge C \frac{|\bv|}{\sqrt{\dist(\bx,b\Omega)}}
	\ \ \ \text{for all $\bx\in\Omega$ and $\bv\in\R^n$}.
\end{equation}
\end{corollary}

From the formula \eqref{eq:CK} for the metric $g_{\B^n}$ on the unit ball of $\R^n$
we see that, except for the size of the constant $C$, the asymptotic 
estimate \eqref{eq:estg2} is the best possible for all tangent vectors $\bv$. 
However, if $\bv$ is normal to the boundary $b\Omega$ 
at the closest point $\bp\in b\Omega$ to $\bx$ (such $\bp$ is unique if $\bx$ is close enough to 
$b\Omega$), we expect a lower bound 
\begin{equation}\label{eq:radialestimate}
	g_\Omega(\bx, \bv) \ge C \frac{|\bv|}{\dist(\bx,b\Omega)}
\end{equation}
for some $C>0$. The estimate \eqref{eq:radialestimate}
implies that $\Omega$ is complete hyperbolic. 
This follows from \cite[Lemma 1.1, p.\ 2396]{IvashkovichRosay2004} 
applied to conformal harmonic discs. 

For the Kobayashi metric on strongly pseudoconvex domains in $\C^n$,
the estimate \eqref{eq:radialestimate} for radial vectors $\bv$ was obtained by 
Graham \cite{Graham1975}; see also \cite[Proposition 7]{Sibony1981}. 
Another proof under the weaker assumption
that $b\Omega$ is strongly pseudoconvex near a point $\bp\in b\Omega$ 
and $\bx\in\Omega$ is close to $\bp$ was given by Forstneri\v c and Rosay 
\cite[Theorem 2.1]{ForstnericRosay1987}. 
We could not adapt those proofs to the present situation. 
Instead, we obtain the estimate \eqref{eq:radialestimate} 
by following \cite[proof of Theorem 1]{IvashkovichRosay2004}, 
thereby obtaining the following result.

%
%
\begin{theorem}\label{th:completehyperbolic}
Let $\Omega$ be a (not necessarily bounded) domain in $\R^n$, $n\ge 3$.  
If the boundary $b\Omega$ is strongly minimally convex at a point $\bp\in b\Omega$, then 
$\bp$ is at infinite minimal distance from $\Omega$. In particular, every bounded strongly 
minimally convex domain is complete hyperbolic.
\end{theorem}

%
%
\begin{remark}\label{rem:complete}
A domain $\Omega\subset \R^n$ with $\Cscr^2$ boundary 
is strongly minimally convex if and only if at every point $\bp\in b\Omega$
the interior principal curvatures $\nu_1\le \nu_2\le \cdots\le \nu_{n-1}$ of $b\Omega$ 
satisfy $\nu_1+\nu_2>0$ (see \cite[Theorem 8.1.13]{AlarconForstnericLopez2021}). 
If at some point $\bp\in b\Omega$ we have $\nu_1+\nu_2<0$, 
then $\bp$ is at finite minimal distance from $\Omega$.
Indeed, there is an embedded conformal minimal disc in $\Omega\cup\{\bp\}$
centred at $\bp$ \cite[Lemma 3.13]{HarveyLawson2013IUMJ}, 
so Example \ref{ex:notcompletehyperbolic} applies.
The following consequence seems worthwhile recording.
We do not know whether the converse holds; see Problem C in Section \ref{sec:problems}.
\end{remark}

\begin{corollary}
If $M$ is a smooth embedded surface in $\R^3$ such that the minimal distance
to any point of $M$ from both sides is infinite, then $M$ has vanishing mean curvature, 
and hence is a minimal surface.
\end{corollary}

%
%
To prove Theorem \ref{th:completehyperbolic} we shall need a couple of lemmas. 
The first one is an analogue of \cite[Lemma 2.2]{IvashkovichRosay2004}. 

\begin{lemma}\label{lem:distanceestimate}
If the boundary of the domain $\Omega\subset\R^n,\ n\ge 3,$ is strongly minimally convex
near $\bp\in b\Omega$, then for any $r\in (0,1)$ there are constants $\delta>0$ and $C>0$
such that every $f\in \CH(\D,\Omega)$ with $|f(0)-\bp|<\delta$ satisfies
\begin{equation}\label{eq:estdisctance}
	|f(z)-f(0)| \le C\sqrt{\dist(f(0),b\Omega)}\ \ \ \text{for all $|z|\le r$}.
\end{equation}
\end{lemma}

\begin{proof}
Pick a ball $U\subset \R^n$ around $\bp$ such that $\Omega$ admits 
a strongly MPSH defining function $u$ in a neighbourhood of $\overline {\Omega\cap U}$.
For $\epsilon>0$ small, the function $\rho_\bq(\bx)= u(\bx)-\epsilon |\bx-\bq|^2$
is strongly MPSH on $\overline {\Omega\cap U}$ for every $\bq\in b\Omega\cap U$.
Up to shrinking $U$ around $\bp$ if necessary, there are constants $a,b>0$ such that for 
every $\bq$ as above we have
\begin{equation}\label{eq:estAB}
	-b |\bx-\bq|\le \rho_\bq(\bx) \le -a |\bx-\bq|^2,\quad \bx\in \overline \Omega\cap U.
\end{equation}
Fix $r\in [0,1)$ and let $r'=(1+r)/2$. There is a $\delta >0$ 
such that for every $f\in \CH(\D,\Omega)$ with $\dist(f(0),\bp)<\delta$ we have that 
$f(r'\D)\subset U\cap \Omega$ by Lemma \ref{lem:localization}.
Hence, by rescaling the disc we may assume that this holds for $r'=1$.
(This will change the final constant $C$ by a positive factor.) 
Let $\bq\in b\Omega\cap U$ be the nearest point to $f(0)$ in $b\Omega$. 
By Harnack's theorem (on the growth on positive harmonic functions in the disc) and
subharmonicity of the function $z\mapsto |f(z)-\bq|^2$ there is a constant $c=c(r)>0$ such that
\[
	|f(z)-\bq|^2 \le c \int_0^{2\pi} |f(\E^{\imath t})-\bq|^2\frac{dt}{2\pi} \quad
	\text{for all $|z|\le r$}.
\]
Since $\rho_\bq \circ f$ is subharmonic on $\D$, it follows that for all $|z|\le r$ we have 
\[
	b |f(0)-\bq| 
	\ge -\rho_\bq(f(0)) \ge - \int_0^{2\pi} \rho(f(\E^{\imath t})) \frac{dt}{2\pi}
	\ge a \int_0^{2\pi} |f(\E^{\imath t}) - \bq|^2\frac{dt}{2\pi} 
	\ge \frac{a}{c} |f(z)-\bq|^2.
\]
(The first and the third inequality hold by \eqref{eq:estAB}, the second one is due to subharmonicity
of $\rho_\bq \circ f$, and the last one uses the previous estimate.) Setting $C=bc/a>0$ we obtain 
\[
	|f(z)-\bq|^2 \le  C |f(0)-\bq| = C\, \dist(f(0),b\Omega)\quad \text{for all $|z|\le r$}.
\]
Since $|f(z)-f(0)|^2\le 2( |f(z)-\bq|^2 + |f(0)-\bq|^2)=  2( |f(z)-\bq|^2 + \dist(f(0),b\Omega)^2)$,
we obtain the desired estimate \eqref{eq:estdisctance} with a different constant $C$ provided 
$\dist(f(0),b\Omega)$ is small enough, which holds if $|f(0)-\bp|$ is small enough. 
\end{proof}

The next lemma provides a Cauchy estimate for harmonic discs in $\R^n$; we omit the proof.

%
%
\begin{lemma}\label{lem:Cauchyestimate}
For every $r\in [0,1)$ there is a constant $C>0$ such that every bounded 
harmonic map $f:\D\to \R^n$ satisfies 
\begin{equation}\label{eq:estnabla}
	|\nabla f(z)| \le C \sup_{\zeta\in \D}|f(\zeta)-f(0)|,\quad\ |z|\le r.  
\end{equation}
\end{lemma}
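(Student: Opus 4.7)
The plan is to reduce the statement to the classical Cauchy-type gradient estimate for bounded harmonic functions. First I would observe that since $\R^n$-valued constants are harmonic, the map $g := f - f(0) : \D \to \R^n$ is again harmonic with $g(0) = 0$, and crucially $\nabla g \equiv \nabla f$. Setting $M := \sup_{\zeta\in\D}|f(\zeta) - f(0)|$, the statement therefore reduces to establishing a bound $|\nabla g(z)| \le C(r,n)\, M$ uniformly for $|z|\le r$.

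Next, I would fix an auxiliary radius $r' := (1+r)/2 \in (r,1)$ and write the Poisson integral representation of each coordinate $g_j$ on the disc of radius $r'$:
\[
g_j(z) = \frac{1}{2\pi}\int_0^{2\pi} \frac{(r')^2-|z|^2}{|r'\E^{\imath\theta}-z|^2}\, g_j(r'\E^{\imath\theta})\, d\theta, \qquad |z|<r'.
\]
Differentiating under the integral sign with respect to the real coordinates of $z$ and using the elementary lower bound $|r'\E^{\imath\theta}-z| \ge r'-|z| \ge (1-r)/2$ for $|z|\le r$, one obtains a uniform pointwise estimate for the differentiated Poisson kernel depending only on $r$. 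This yields
\[
|\nabla g_j(z)| \le C_0(r)\, \sup_{|\zeta|=r'}|g_j(\zeta)| \le C_0(r)\, M, \qquad |z|\le r,
\]
where the last inequality follows from the componentwise bound $|g_j(\zeta)|\le |g(\zeta)|\le M$ valid for all $\zeta\in\D$. Summing the squares over $j=1,\dots,n$ and taking square roots produces \eqref{eq:estnabla} with $C = \sqrt{n}\, C_0(r)$.

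There is no genuine obstacle here: this is simply the classical harmonic Cauchy estimate applied to $g = f - f(0)$, and the translation by the constant $f(0)$ is precisely what replaces $\sup_\D|f|$ on the right hand side by the oscillation $\sup_\D|f - f(0)|$. Equivalently, one may dispense with the Poisson formula entirely and invoke the standard interior gradient estimate $|\nabla h(x_0)| \le (C_n/R)\sup_{B(x_0,R)}|h|$ for harmonic functions, applied to each coordinate $g_j$ on the disc of radius $R = (1-r)/2$ centred at any $z$ with $|z|\le r$.
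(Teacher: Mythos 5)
Your proof is correct. The paper in fact omits the proof of this lemma (``we omit the proof''), so there is no authorial argument to compare against; the reduction you give --- replacing $f$ by $g=f-f(0)$ so that $\nabla g=\nabla f$ while $\sup_\D|g|$ becomes the oscillation appearing on the right-hand side, and then applying the classical Cauchy-type interior gradient estimate for harmonic functions componentwise --- is precisely the standard and evidently intended argument. Both variants you sketch (differentiating the Poisson kernel on $|z|=r'=(1+r)/2$ and bounding $|r'\E^{\imath\theta}-z|\ge (1-r)/2$, or invoking the ready-made interior estimate $|\nabla h(z)|\le (C/R)\sup_{B(z,R)}|h|$ with $R=(1-r)/2$) are complete and there is nothing to add.
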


%
%
%
\begin{proof}[Proof of Theorem \ref{th:completehyperbolic}]
Intersecting $\Omega$ with a small ball centred at $\bp$ and smoothing the corners
we obtain a bounded strongly minimally convex domain $\Omega_1\subset \Omega$ such that 
$\bp$ is contained in the relative interior of $b\Omega\cap b\Omega_1$.
Hence, in view of the localization (see Theorem \ref{th:loc1}), we may assume that $\Omega$ 
is bounded strongly minimally convex. Let $u$ be a strongly MPSH function on a neighbourhood of
$\overline \Omega$ such that $\Omega=\{u<0\}$ and $du\ne 0$ on $b\Omega=\{u=0\}$.
Note that for $\bx\in \Omega$, $-u(\bx)=|u(\bx)|$ is comparable to $\dist (\bx,b\Omega)$. 
Lemmas \ref{lem:distanceestimate} and \ref{lem:Cauchyestimate} 
provide a constant $c>0$ such that every $f\in\CH(\D,\Omega)$ whose
centre $f(0)$ is close enough to $b\Omega$ satisfies
\[
	|\nabla f(z)| \le c \sqrt{|u(f(0))|},\quad |z|\le \frac12.
\]
Together with \eqref{eq:mainformula} this gives
\begin{equation}\label{eq:estLaplace}
	|\Delta (u\circ f)(z)| \le c_1 |\nabla f(z)|^2\le C_1 |u(f(0))|, \quad |z|\le \frac12
\end{equation}
for some constant $c_1>0$ and $C_1=c_1c^2>0$. We claim that this implies 
\begin{equation}\label{eq:estgradient}
	|\nabla (u\circ f)(0)| \le C_2 |u(f(0))|,\quad f\in\CH(\D,\Omega) 
\end{equation}
with another constant $C_2>0$. 
By \cite[Lemma 1.1, p.\ 2396]{IvashkovichRosay2004}, the estimate \eqref{eq:estgradient} 
implies complete hyperbolicity of $\Omega$, so this will prove the theorem.

The proof of \eqref{eq:estgradient} is essentially the same as the proof 
in \cite{IvashkovichRosay2004} that (2.6) implies (2.4) in that paper. We recall the argument. 
Rescaling the disc $\D$ to $\frac12\D$, we may assume that 
\eqref{eq:estLaplace} holds for all $z\in \D$ (with a different constant). 
Set $v=u \circ f:\D\to (-\infty,0)$, so \eqref{eq:estLaplace} says that 
\begin{equation}\label{eq:estLaplace-v}	
	|\Delta v(z)|\le C_1|v(0)|=-C_1 v(0),\quad\ z\in\D.
\end{equation}
We extend $\Delta v$ to $\C$ by setting it equal to $0$ on $\C\setminus\overline\D$.
Consider the function
\begin{equation}\label{eq:gv}
	g(z)=v(z)- \Big( \frac{1}{2\pi} \log|\,\cdotp| * \Delta v\Big) (z)  - C_1 |v(0)|, \quad z\in \D,
\end{equation}
where $*$ denotes the convolution. Note that $\frac{1}{2\pi} \log|\,\cdotp| $ is the Green function 
for $0\in \D$, so $g$ is harmonic on $\D$. From \eqref{eq:estLaplace-v} it follows that  
\[
	\left| \Big(\frac{1}{2\pi} \log|\,\cdotp| * \Delta v \Big) (z)\right| \le C_1|v(0)|,
	\quad\ z\in\D.
\]
Hence, $g\le v<0$ on $\D$ and $|g(0)|<(2C_1+1)|v(0)|$. The Schwarz lemma for negative 
harmonic functions on $\D$ gives $|\nabla g(0)| \le 2|g(0)|$, and hence
\eqref{eq:estLaplace-v} and \eqref{eq:gv} imply
\[
	|\nabla v(0)| \le |\nabla g(0)| + \sup_\D |\Delta v| \le  2|g(0)| + C_1|v(0)| \le (5C_1+2) |v(0)|.
\]
This is the estimate \eqref{eq:estgradient} with the constant $C_2=5C_1+2$.

From \eqref{eq:estgradient}, completeness of $g_\Omega$ is seen by 
\cite[Lemma 1.1]{IvashkovichRosay2004}, replacing $(\chi,u)$ in their notation by the 
pair $(u,f)$ in our notation. (See also the proof of Lemma \ref{lem:halfspace}.)
\end{proof}

%
%
%
%
\section{Complete hyperbolicity of some unbounded strongly minimally convex domains}\label{sec:unbounded}
Combining Theorem \ref{th:completehyperbolic} with the localization theorem 
(see Theorem \ref{th:loc1}), we now give examples of unbounded strongly minimally convex 
domains $\Omega$ in $\R^n$ which are complete hyperbolic. The mentioned results show that 
any divergent path in $\Omega$ clustering at a finite boundary point of $\Omega$ has infinite length. 
It remains to ensure that the minimal length of any path in $\Omega$ diverging to infinity is infinite. 
Let us record this observation.

\begin{proposition}
If $\Omega\subset \Omega'\subset\R^n$ are (not necessarily bounded) 
domains such that $\Omega'$ is complete hyperbolic and $\Omega$ is strongly 
minimally convex, then $\Omega$ is also complete hyperbolic.
\end{proposition}

Estimating the length of a path diverging to infinity can sometimes be done with the help 
Theorem \ref{th:mpsh} (b). Here is a result in this direction.

%
%
\begin{theorem}\label{th:unbounded}
Let $\Omega\subset \R^n$, $n\ge 3$, be an unbounded strongly minimally convex domain.
Assume that there is a $\Cscr^2$ function $u:\Omega\to (-\infty,0)$ having
the following two properties:
\begin{enumerate}[\rm (a)]
\item There is $c>0$ such that $\tr_\Lambda \Hess_u(\bx)\ge c>0$ for all
$\bx\in \Omega$ and $\Lambda\in G_2(\R^n)$.
\item There is $c'>0$ such that $|u(\bx)| \le c'|\bx|^2$ for 
all $\bx\in \Omega$ with $|\bx|\ge 1$.
\end{enumerate}
Then $\Omega$ is complete hyperbolic.
\end{theorem}

\begin{proof}
As said above, any divergent path in $\Omega$ which terminates at a boundary point of $\Omega$
has infinite length. Suppose now that $\gamma:[0,1)\to\Omega$ is a path with 
$\lim_{t\to 1}|\gamma(t)|=\infty$. By condition (a) in the theorem and 
\eqref{eq:estg} in Theorem \ref{th:mpsh} there is a $C>0$ such that 
\[
	g_\Omega(\bx, \bv) \ge C \frac{|\bv|}{\sqrt{|u(\bx)|}} 
	\ \ \text{for all $\bx\in\Omega$ and $\bv\in\R^n$}. 
\]
In view of condition (b) we further get 
\[
	g_\Omega(\bx, \bv) \ge C' \frac{|\bv|}{|\bx|}
	\ \ \text{for all $\bx\in \Omega$ with $|\bx|\ge 1$ and $\bv\in\R^n$}
\]
with the constant $C'=C/\sqrt{c'}>0$. Hence,
\[
	g_\Omega(\gamma(t), \dot\gamma(t)) \ge C' \frac{|\dot\gamma(t)|}{|\gamma(t)|}
\]
holds for all $t$ close to $1$. It clearly follows that 
$\int_0^1 g_\Omega(\gamma(t), \dot\gamma(t)) dt=+\infty$.
\end{proof}

\begin{corollary}\label{cor:Omegaab}
For any $a<1$ and $b\in\R$ the domain 
\[
	\Omega_{a,b}=\left\{(x,y,z)\in\R^3: x^2 + y^2 < az^2 + b\right\}
\]
is complete hyperbolic. Hence, every strongly minimally convex domain contained 
in $\Omega_{a,b}$ is complete hyperbolic.  
\end{corollary}

\begin{proof} When $a<1$, the function $u(x,y,z)=x^2 + y^2 - az^2-b$ satisfies condition (a)
in Theorem \ref{th:unbounded} (see Example \ref{ex:Omegaab}), and it clearly satisfies 
$|u(x,y,z)|\le a|z|^2+b$ on $\Omega_{a,b}$. Hence, condition (b) in 
Theorem \ref{th:unbounded} holds as well, and the conclusion follows.
\end{proof}

%
%
%
%
\section{Extending conformal minimal surfaces across punctures}\label{sec:extend}
Recall that $\D^*=\D\setminus \{0\}=\{z\in \C:0<|z|<1\}$. We prove the following result.

%
%
\begin{theorem}\label{th:disc}
Let $\Omega$ be a hyperbolic domain in $\R^n$, $n\ge 3$, and let
$f:\D^*\to \Omega$ be a conformal harmonic map.  
If there is a sequence $z_k\in\D^*$ converging to $0\in \D$ such
that $f(z_k)$ converges to a point in $\Omega$, then $f$ extends
to a conformal harmonic map $\D\to\Omega$.  
\end{theorem}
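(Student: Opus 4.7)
The plan is to adapt the classical strategy of Kwack's theorem, replacing the topological step of Kwack's argument by a subharmonic maximum principle on annuli that comes for free from the MPSH machinery of Section~\ref{sec:Sibony}. Setting $\bp=\lim_k f(z_k)\in\Omega$, the plan reduces to showing that $f$ extends continuously to $0$ with $f(0)=\bp$; the harmonic extension and conformality then follow from standard facts.

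First, I will show that images of small circles around $0$ under $f$ shrink to $\bp$. The Poincar\'e metric on $\D^*$ is $|dz|/(|z|\log(1/|z|))$, so the Poincar\'e length of the circle $\{|z|=r\}$ equals $2\pi/\log(1/r)\to 0$ as $r\to 0$. By the distance-decreasing property \eqref{eq:distancedecreasing1}, the $\rho_\Omega$-diameter of $f(\{|z|=r\})$ therefore tends to $0$. By hyperbolicity of $\Omega$ and Theorem~\ref{th:hyperbolic}, $\rho_\Omega$ induces the Euclidean topology on $\Omega$. Choose $\epsilon>0$ so small that $\overline{\B(\bp,\epsilon)}\subset\Omega$, and combine $f(z_k)\to\bp$ with the shrinking-diameter estimate to produce an index $k_0$ such that
\[
	f(\{|z|=|z_k|\})\subset \B(\bp,\epsilon/2) \quad \text{for every } k\ge k_0.
\]

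Second, I will upgrade this circle-by-circle estimate to an annular estimate via the maximum principle. The function $\bx\mapsto |\bx-\bp|^2$ on $\R^n$ has Hessian $2\,\mathrm{Id}$, so is strongly MPSH. By Proposition~\ref{prop:mpsh}, the composition $z\mapsto |f(z)-\bp|^2$ is subharmonic on $\D^*$, and the maximum principle on each closed annulus $A_k=\{|z_{k+1}|\le |z|\le |z_k|\}$ (for $k\ge k_0$) bounds $|f-\bp|$ there by its maximum on $\partial A_k$, which is below $\epsilon/2$. Taking the union of these annuli gives $|f(z)-\bp|\le \epsilon/2$ on the punctured disc $\{0<|z|\le |z_{k_0}|\}$. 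Since $\epsilon$ was arbitrary, $f(z)\to\bp$ as $z\to 0$.

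Finally, to extend $f$ across $0$: each coordinate of $f$ is a bounded harmonic function on a punctured neighborhood of $0$, and such functions extend harmonically across $0$ by the classical removable singularity theorem for bounded harmonic functions on $\D^*$. Setting $f(0)=\bp$, the extension is harmonic on $\D$; the conformality conditions $|f_x|=|f_y|$ and $f_x\cdot f_y=0$ hold on $\D^*$ and persist at $0$ by continuity of $df$. The image lies in $\Omega$ since $f(\D^*)\subset\Omega$ and $f(0)=\bp\in\Omega$. The only substantive step is the second one; the key insight is that the strongly MPSH function $|\bx-\bp|^2$ furnishes an a priori subharmonic control on $|f(z)-\bp|^2$ and replaces the delicate topological connectedness argument of Kwack's original proof with a one-line invocation of the maximum principle on annuli.
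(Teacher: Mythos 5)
Your proof is correct and follows essentially the same route as the paper's: small Poincar\'e length of the circles $\{|z|=r_k\}$, the metric-decreasing property plus hyperbolicity (via Theorem~\ref{th:hyperbolic}) to get the images of these circles into a small Euclidean ball around $\bp$, a maximum principle on the annuli between consecutive circles, and the classical removable singularity theorem for bounded harmonic functions. The only cosmetic difference is that where the paper invokes ``the maximum principle for minimal surfaces'' as a black box, you spell out the underlying mechanism by observing that $|\bx-\bp|^2$ is strongly MPSH, hence $|f-\bp|^2$ is subharmonic on $\D^*$ by Proposition~\ref{prop:mpsh} --- which is exactly the convex-hull property of minimal surfaces that the paper is citing.
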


The analogue of Theorem \ref{th:disc} for punctured holomorphic discs in Ko\-ba\-ya\-shi hyperbolic manifolds
(and, more generally, in hyperbolic complex spaces) is due to Kwack \cite{Kwack1969};
see also Kobayashi \cite[Theorem 3.1]{Kobayashi2005}. 
Kwack's theorem generalizes the big Picard theorem, which says that
a holomorphic map $\D^*\to\C\setminus\{0,1\}$ extends to a holomorphic
map $\D\to\CP^1=\C\cup\{\infty\}$. More generally, if $Y$ is a compact hyperbolic manifold,
$X$ is a connected complex manifold, and $A$ is a proper closed complex subvariety of $X$,
then every holomorphic map $X\setminus A\to Y$ extends to a holomorphic map $X\to Y$
\cite[Theorem 4.1]{Kobayashi2005}.

Theorem \ref{th:disc} fails in general if $\Omega$ is not hyperbolic.
For example, if $h$ is a holomorphic function on $\D^*$ with an essential singularity at $0$,
then $(h,0):\D^*\to\C\times\R=\R^3$ is a conformal harmonic map for which the theorem fails.
The same is true for the map 
$
	f=(\E^h,0):\D^*\to \Omega=(\C\setminus \{0\})\times \R\subsetneq\R^3.
$ 
(Note that $\Omega$ is the complement of a line in $\R^3$.)

Theorem \ref{th:disc} is trivial if $\Omega$ is a bounded domain  (since a bounded harmonic function 
extends harmonically across a puncture), but is nontrivial on unbounded domains. 
(Every bounded domain in $\R^n$ is hyperbolic by Corollary \ref{cor:bounded}.)
Combining with Theorem \ref{th:convex} gives:

%
%
\begin{corollary}
The conclusion of Theorem \ref{th:disc} holds in every convex domain $\Omega\subset \R^n$ 
$(n\ge 3)$ which does not contain any affine $2$-plane. 
\end{corollary}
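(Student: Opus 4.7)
The plan is to derive the corollary as a direct combination of the two main theorems cited just before it. Let $\Omega\subset\R^n$ be a convex domain with $n\ge 3$ containing no affine $2$-plane, and let $f:\D^*\to\Omega$ be a conformal harmonic map with a sequence $z_k\to 0$ such that $f(z_k)$ converges to some point $\bq\in\Omega$.

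First I would invoke Theorem \ref{th:convex}, specifically the chain of equivalences $(\mathrm{iv})\Leftrightarrow(\mathrm{iii})$. The hypothesis that $\Omega$ is convex and contains no affine $2$-dimensional subspace is exactly condition (iv), so Theorem \ref{th:convex} yields that $\Omega$ is hyperbolic in the sense of Definition \ref{def:hyperbolic}. (In fact it gives complete hyperbolicity via (i), but ordinary hyperbolicity is all that the extension result requires.)

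With hyperbolicity of $\Omega$ now known, the hypotheses of Theorem \ref{th:disc} are satisfied: $\Omega$ is a hyperbolic domain in $\R^n$, $f:\D^*\to\Omega$ is conformal harmonic, and the sequence $z_k\to 0$ with $f(z_k)\to\bq\in\Omega$ provides the interior cluster point required by that theorem. Applying Theorem \ref{th:disc} directly produces a conformal harmonic extension $\tilde f:\D\to\Omega$ of $f$, which is the desired conclusion.

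There is no real obstacle here; the corollary is essentially a syntactic composition of Theorems \ref{th:convex} and \ref{th:disc}. The only point worth emphasizing in the write-up is that Theorem \ref{th:convex} covers unbounded convex domains as well (for example slabs or products with a bounded convex factor), which is exactly the setting where the corollary has genuine content, since for bounded $\Omega$ the extension is already immediate from the removable singularity theorem for bounded harmonic functions on $\D^*$.
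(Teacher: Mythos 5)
Your proposal is correct and matches the paper's reasoning exactly: the paper introduces the corollary with the phrase ``Combining with Theorem \ref{th:convex} gives,'' meaning one uses the equivalence (iii)$\Leftrightarrow$(iv) of that theorem to conclude $\Omega$ is hyperbolic and then applies Theorem \ref{th:disc}. Nothing further is needed.
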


%
%
\begin{proof}[Proof of Theorem \ref{th:disc}]
We begin with preliminaries. On the disc $\D=\{z\in\C:|z|<1\}$ we have the 
Poincar\'e--Bergman metric 
$
	\Pcal_\D=\frac{|dz|}{1-|z|^2}
$
of constant Gaussian curvature $-4$. 
(This normalization is used in the definition of the minimal metric $g_\Omega$.) 
The Poincar\'e metric on the punctured disc $\D^*=\{0<|z|<1\}$ of 
constant curvature $-4$, which makes the universal holomorphic covering map 
$\D\to \D^*$ a local isometry, equals
\[
	\Pcal_{\D^*}=\frac{|dz|}{2|z|\log(1/|z|)},\quad\ 0<|z|<1.
\]
(See \cite[p.\ 78]{Kobayashi2005}. Kobayashi uses a metric with constant 
curvature $-1$, so the multiplicative constants differ, which is irrelevant for the proof.)
The metric $\Pcal_{\D^*}$ is complete, and the distance from any fixed point $z_0\in\D^*$ to $z\in\D^*$ 
grows as $\frac12 \log\log (1/|z|) +O(1)$ when $z\to 0$ or $|z|\to 1$. 
The circle $|z|=r\in (0,1)$ has Poincar\'e length 
\begin{equation}\label{eq:Lr}
	L(r)= \frac{\pi}{\log 1/r}. 
\end{equation}
In particular, $L(r)$ decreases to $0$ as $r\to 0$. 
 
The scheme of our proof follows \cite[proof of Theorem 3.1]{Kobayashi2005}, but is considerably simpler. It also gives a simple proof of Kwack's theorem for punctured holomorphic discs in Kobayashi hyperbolic domains in $\C^n$.

Let $\bx=(x_1,\ldots,x_n)$ denote Euclidean coordinates on $\R^n$.
Assume that $f$ satisfies the hypotheses of Theorem \ref{th:disc}, so we have
a sequence $z_k\in\D^*$ with
\begin{equation}\label{eq:fzk}
	 \lim_{k\to\infty}z_k=0\ \ \text{and}\ \  \lim_{k\to\infty} f(z_k)=\bp\in \Omega.
\end{equation} 
We must must show that for every $\epsilon>0$ there is a $\delta\in (0,1)$ such that $f$ 
maps the punctured disc $\delta \D^*= \{z\in\C:0<|z|<\delta\}$ into the ball 
$\B(\bp,\epsilon)$. We shall assume that $\epsilon>0$ is small enough such that 
$\overline \B(\bp,\epsilon)\subset \Omega$.
 
Passing to a subsequence, we may assume that the sequence
$r_k=|z_k|>0$ $(k\in\N)$ is strictly monotonically decreasing to $0$. 
Let $\gamma_k=\{z\in\C:|z|=r_k\}$, $k\in\N$. 
By \eqref{eq:Lr} we have that $\lim_{k\to\infty} L(r_k)=0$. 
Since $f:\D^*\to\Omega$ is metric-decreasing from the
Poincar\'e metric on $\D^*$ to the minimal metric $g_\Omega$ on $\Omega$, 
the length of the curve $f(\gamma_k)\subset \Omega$ with respect to $g_\Omega$ 
converges to zero as $k\to\infty$. 
Since $\Omega$ is hyperbolic, $g_\Omega$ induces the standard topology 
on $\Omega$ by Theorem \ref{th:hyperbolic}. 
Hence, since $z_k\in\gamma_k$ and $f(z_k)\to \bp$ as $k\to\infty$, 
there is a $k_0\in\N$ such that
$
	f(\gamma_k)\subset \B(\bp,\epsilon)\ \ \text{for all}\ \ k\ge k_0.
$ 
Let 
\[
	A_k=\{z\in \C: r_{k+1}<|z|<r_k\} \subset \D^*,\quad \  k\in\N.
\]
Note that $bA_k=\gamma_k\cup \gamma_{k+1}$, and hence $f(bA_k)\subset \B(\bp,\epsilon)$
for $k\ge k_0$. By the maximum principle for minimal surfaces it follows that 
$f(A_k)\subset \B(\bp,\epsilon)$ for all $k\ge k_0$. Since 
\[
	\bigcup_{k=k_0}^\infty A_k =\{z\in\D: 0<|z|\le r_{k_0}\} = r_{k_0}\D^*,
\]
we see that $f( r_{k_0}\D^*)\subset  \B(\bp,\epsilon)$. 
\end{proof}

%
%

We now give an application of Theorem \ref{th:disc}.
By analogy with domains in $\C^n$, we introduce the following notion
(see Kobayashi \cite[p.\ 190]{Kobayashi1998}). 

\begin{definition}\label{def:hyperconvex}
A domain $\Omega\subset\R^n$ $(n\ge 3)$  is {\em hyperconvex} if it admits
a continuous negative MPSH exhaustion function $u:\Omega\to [-\infty,0)$.
\end{definition}

Theorem \ref{th:mpsh} (d) says that every hyperconvex domain is hyperbolic.
Such a domain may be unbounded as shown by Example \ref{ex:hyperconvex}.

%
%
\begin{corollary}\label{cor:hyperconvex}
If $\Omega \subset\R^n$ $(n\ge 3)$ is a hyperconvex domain, then
any conformal harmonic map $\D^*\to \Omega$ extends
to a conformal harmonic map $\D\to \Omega$.

In particular, $\Omega$ does not contain any nonconstant conformal minimal surface 
$M\to \Omega$ such that $M$ is the complement of finitely many points in a 
compact Riemann surface.
\end{corollary}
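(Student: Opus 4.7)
The plan is to reduce the extension assertion to Theorem \ref{th:disc}: since $\Omega$ is hyperbolic by Theorem \ref{th:mpsh}(d), it will suffice, for a given conformal harmonic map $f:\D^*\to\Omega$, to produce a sequence $z_k\to 0$ in $\D^*$ such that $f(z_k)$ converges to a point of $\Omega$, for then Theorem \ref{th:disc} supplies the desired conformal harmonic extension $\D\to\Omega$.

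To produce such a sequence, I would exploit the continuous negative MPSH exhaustion function $u:\Omega\to[-\infty,0)$ furnished by hyperconvexity. By Proposition \ref{prop:mpsh}, the composition $v=u\circ f:\D^*\to[-\infty,0)$ is subharmonic and bounded above by $0$, so the standard removable singularity theorem for subharmonic functions extends it to a subharmonic function $\tilde v:\D\to[-\infty,0]$ with $\tilde v(0)=\limsup_{z\to 0}v(z)$. Because $v<0$ on $\D^*$, the strong maximum principle excludes $\tilde v(0)=0$, so $\tilde v(0)\in[-\infty,0)$. If $\tilde v(0)\in(-\infty,0)$, I would pick $z_k\to 0$ with $v(z_k)\to\tilde v(0)$; then $f(z_k)$ eventually lies in the compact sublevel set $\{\bx\in\Omega:u(\bx)\le \tilde v(0)/2\}$, so a subsequence converges to some $\bp\in\Omega$. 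If $\tilde v(0)=-\infty$, then $u(f(z))\to-\infty$ as $z\to 0$, which forces $f(z)$ into the compact set $\{u\le -1\}$ for $|z|$ sufficiently small, and again any sequence $z_k\to 0$ contains a subsequence converging in $\Omega$. In both cases Theorem \ref{th:disc} closes the argument.

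For the \emph{in particular} statement, I would apply the first part of the corollary locally at each puncture of $M$ inside $R$, thereby extending $f$ to a conformal harmonic map $\tilde f:R\to\Omega$ on the compact Riemann surface $R$; each Euclidean coordinate of $\tilde f$ is then a real-valued harmonic function on a compact surface without boundary, hence constant by the maximum principle, so $\tilde f$ and thus $f$ is constant, contradicting nonconstancy. The main obstacle I anticipate is the \emph{a priori} possibility that $f(z)$ drifts out toward $b\Omega$ as $z\to 0$, which would prevent the direct application of Theorem \ref{th:disc}; this behaviour is precisely what the strong maximum principle rules out, since $\tilde v(0)=0$ would force $\tilde v\equiv 0$ on $\D$ while $v<0$ on $\D^*$.
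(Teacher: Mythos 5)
Your argument is correct and rests on the same mechanism as the paper's: push the hyperconvex exhaustion $u$ forward to a negative subharmonic function on $\D^*$, extend it across the puncture, and use the maximum principle to control $f$ near the origin, then invoke Theorem~\ref{th:disc}. The organization differs slightly: the paper argues by contradiction (if $f$ did not extend, Theorem~\ref{th:disc} forces $f(z_k)\to b\Omega\cup\{\infty\}$ for every sequence, hence $u\circ f\to 0$, so the extended subharmonic function attains the interior maximum $0$, which is impossible), whereas you argue directly by first removing the singularity of $u\circ f$, ruling out the value $0$ at the origin via the strong maximum principle, and then producing a convergent sequence $f(z_k)$ from compactness of sublevel sets of $u$. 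You also explicitly handle the case $\tilde v(0)=-\infty$, which the paper sidesteps by working with a version of $u$ valued in $(-\infty,0)$ (legitimate after replacing $u$ by $\max\{u,-1\}$, which remains a continuous negative MPSH exhaustion). Both routes are valid and of the same length; yours is marginally more self-contained since it does not rely on the contrapositive reading of Theorem~\ref{th:disc}. The deduction of the \emph{in particular} statement from the extension across punctures and the fact that real harmonic functions on a compact boundaryless surface are constant is also correct and is the intended argument.
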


It is easy to find a bounded (hence hyperbolic) domain $\Omega\subset \R^3$
and a conformal harmonic punctured disc $f:\D^*\to \Omega$, which extends to a 
conformal harmonic disc $f:\D\to \overline\Omega$ but $f(0)\in b\Omega$. 
(See Example \ref{ex:notcompletehyperbolic}.) Hence, Corollary \ref{cor:hyperconvex} 
fails in general on hyperbolic domains which are not hyperconvex.

\begin{proof}[Proof of Corollary \ref{cor:hyperconvex}.]
Let $u:\Omega\to (-\infty,0)$  be a continuous negative MPSH exhaustion function.
Assume that $f:\D^*\to \Omega$ is a conformal harmonic map.
If $f$ does not extend to the origin $0\in \D$ with $f(0)\in \Omega$, 
then Theorem \ref{th:disc} shows that for every sequence $z_k\in \D$
with $\lim_{k\to\infty} z_k=0$ the sequence $f(z_k)$ diverges to $b\Omega\cup \{\infty\}$.
Since $u$ is a negative exhaustion function on $\Omega$, it follows that
$\lim_{k\to\infty} u(f(z_k))=0$. This shows that $u\circ f:  \D^* \to (-\infty,0)$ extends to 
a continuous function $h:\D\to (-\infty,0]$ with $h(0)=0$. Since $f$ is conformal harmonic 
on $\D^*$ and $u$ is MPSH, $u\circ f$ is subharmonic on $\D^*$, and hence the extension
$h$ is subharmonic on $\D$. The fact that the extended function reaches 
maximal value $h(0)=0$ at $0\in\D$  contradicts the maximum principle for 
subharmonic functions. 
This contradiction shows that $f$ extends across the origin to a conformal harmonic map 
$\D\to\Omega$. The last statement follows from the fact that every harmonic function
on a compact Riemann surface $\wh M$ is constant.
\end{proof}

%
%
%
%
\section{Open problems}\label{sec:problems}

\noindent{\bf A. Continuity of the minimal metric.}
Let $\Omega$ be a domain in $\R^n$, $n\ge 3$. Recall from Section \ref{sec:definition} 
that the minimal pseudometric $g_\Omega:\Omega\times \R^n\to\R_+$ is upper-semicontinuous.

\begin{problem}
Is there a domain $\Omega\subset\R^n$ whose minimal metric $g_\Omega$
is not continuous?
\end{problem}

There are domains $\Omega\subset \C^2$ with discontinuous Kobayashi metric.
We wish to thank Peter Pflug for the following example (private communication).
There is a pseudoconvex balanced domain $\Omega\subset \C^n$ for any $n>1$ 
with a non-continuous Minkowski function $h$. Then the Kobayashi metric of 
$\Omega$ at the origin equals $\Kcal_\Omega(0;\cdot)=h$, hence it is not continuous. 
See \cite[Theorem 3.5.3]{JarnickiPflug2013} for the details.

%
%
\smallskip
\noindent{\bf B. Relationship between $\Mcal_\Omega$ and $g_\Omega$.}
Recall (see \eqref{eq:gM}) that for any unit vector $\bv\in\R^n,\ |\bv|=1$ we have
$ 
	g_\Omega(\bx,\bv) = 
	\inf\big\{ \Mcal_\Omega(\bx,\Lambda): \Lambda\in  G_2(\R^n), \bv\in\Lambda\}. 
$
On the ball $\B^n\subset\R^n$ we also have that 
\begin{equation}\label{eq:max}
	\Mcal_{\B^n}(\bx,\Lambda) = \max\bigl\{g_{\B^n}(\bx,\bv) : \bv\in\Lambda, |\bv|=1\bigr\}.
\end{equation}
(See \cite[Eq.\ (2.10)]{ForstnericKalaj2021}.) 
This fails in general as the following example shows. 

%
%
\begin{example} \label{ex:nonregular} 
Let $\Omega=\B^2\times\R\subset\R^3$. Taking $\Lambda=\R^2\times \{0\}$ we have that 
$\Mcal_\Omega(\zero,\Lambda)=1$, the extremal disc being the linear embedding
$x+\imath y \mapsto (x,y,0)$. Indeed, given $f=(f_1,f_2,f_3)\in \CH(\D,\Omega)$
with $f(0)=\zero$ and $df_0(\R^2)=\Lambda$, the projection $(f_1,f_2):\D\to \Lambda\cap \Omega=\D$
is a harmonic map which is conformal at the origin, so the claim follows from the Schwarz--Pick
lemma due to Forstneri\v c and Kalaj \cite[Theorem 1.1]{ForstnericKalaj2021}. 
Given a unit vector $\bu=(u_1,u_2,0)\in \Lambda$, take $\bv=(0,0,1)$ and let $\Sigma_\bu$ 
be the $2$-plane spanned by the orthonormal frame $(\bu,\bv)$. Then, 
$\Sigma_\bu \cap\Omega$ is conformally equivalent to the strip
$S=(-1,+1)\times \R\subset \C$. The function
\[
	f(z)=\frac{2\imath}{\pi}\log\frac{1+z}{1-z}, \quad \ z\in\D
\]
provides a conformal diffeomorphism of $\D$ onto $S$. We have 
$f'(0)=4\imath/\pi$, so $\Mcal_\Omega(\zero,\Sigma_\bu)\le 1/\|df_0\| = \pi/4$.
Hence, $g_\Omega(\zero,\bu)\le \pi/4 <1=\Mcal_\Omega(\zero,\Lambda)$ 
for all unit vectors $\bu\in \Lambda$. 
\qed\end{example}

The argument in the above example shows that property \eqref{eq:max} fails on any 
bounded strongly convex domains in $\B^2\times\R$ with the base $\B^2\times\{0\}$ 
which is sufficiently big in the third coordinate direction. 
Hence, it seems that the ball is rather exceptional in this sense.

%
%
\begin{problem}
Let $\Omega$ be a bounded domain in $\R^n$ $(n\ge 3)$ such that 
\[
	\Mcal_\Omega(\bx,\Lambda) = \max\bigl\{g_\Omega(\bx,\bv) : \bv\in\Lambda, |\bv|=1\bigr\}
\]
holds for every $\bx\in\Omega$ and $\Lambda\in  G_2(\R^n)$. Is $\Omega$ is
equivalent to $\B^n$ by a rigid transformation?
\end{problem}

%
%
\smallskip
\noindent{\bf C. Distance to a minimal surface.}
By Lemma \ref{lem:halfspace}, any point in a half-space $\H\subset\R^n$
is at infinite minimal distance from the hyperplane $\Sigma=b\H$. 
This fails for more general minimal hypersurfaces if $n\ge 4$; however,
the following is an interesting question in dimension $3$. 

\begin{problem}\label{pr:distancetohypersurface}
Let $\Sigma$ be an embedded minimal surface in a bounded domain 
$\Omega \subset\R^3$. Is the minimal distance in $\Omega\setminus \Sigma$ 
to any point $\bp\in \Sigma$ infinite? 
\end{problem}

The analogous problem for the Kobayashi metric  has affirmative answer:
the distance to a complex hypersurface is infinite, also in the nonintegrable case; see
Ivashkovich and Rosay \cite{IvashkovichRosay2004}. Hence, if a bounded weakly 
pseudoconvex domain $D$ in $\C^n$ contains a Levi-flat piece in the boundary $bD$,
then the Kobayashi distance to it is also infinite. Concerning more general weakly pseudoconvex domains,
Catlin proved in 1989 \cite{Catlin1989} that every bounded weakly pseudoconvex domain 
of finite type in $\C^2$ is complete hyperbolic. A partial result in the nonintegrable case 
was obtained by Bertrand \cite{Bertrand2012} in 2012.
It would be of interest to see whether minimally convex domains behave better in this respect.

\begin{problem}
Suppose that $\Omega\subset\R^n$ is a bounded minimally convex domain with smooth
boundary; is $\Omega$ necessarily complete hyperbolic in the minimal metric $g_\Omega$?
\qed\end{problem}

%
%
\smallskip
\noindent{\bf D. Extremal minimal discs in convex domains.}
If $\Omega$ is a bounded strongly convex domain in $\C^n$, then by the seminal result
of Lempert \cite{Lempert1981,Lempert1982} there exists for every point 
$\bp\in \Omega$ and vector $0\ne \xi \in \C^n$ a unique extremal holomorphic disc 
$F=F_{p,\xi}:\D\to \Omega$ such that 
$F(0)=\bp$ and the derivative $F'(0)=r\xi$ $(r>0)$ is the biggest possible. 
This extremal disc is properly embedded in $\Omega$, and it is 
an isometry from $\D$ with the Poincar\'e metric 
onto its image $F(\D)\subset \Omega$
with the Kobayashi metric $\Kcal_\Omega$. Extremal holomorphic discs also exist in taut 
unbounded convex domains of $\C^n$ (see \cite[Lemma 3.3]{BracciSaracco2009}).

\begin{problem}\label{prob:Lempert}
Does the analogue of Lempert's theorem hold for conformal harmonic discs in a (smoothly) 
bounded convex domain $\Omega\subset\R^n$, $n\ge 3$? 
\end{problem}

The answer is affirmative if $\Omega$ is the ball $\B^n$ of $\R^n$, in which case the extremal discs 
are the proper affine discs in $\B^n$ (see Example \ref{ex:ball} and \cite[Theorem 2.1]{ForstnericKalaj2021}).
It seems that the ball is the only domain for which the answer to Problem \ref{prob:Lempert} 
is known at this time.

%
%
\smallskip
\noindent{\bf E. Gromov hyperbolicity of strongly minimally convex domains.}
It was shown by Balogh and Bonk \cite{BaloghBonk2000} that the Kobayashi metric on 
any smooth bounded strongly pseudoconvex domain in $\C^n$ is Gromov hyperbolic.
Since in many respects strongly pseudoconvex domains play a similar role in complex analysis as the 
strongly minimally convex domains play in the theory of minimal surfaces, the following is a natural
question. 

\begin{problem}\label{prob:Gromovhyperbolic}
Is every smooth bounded strongly minimally convex domain in $\R^n$ Gromov hyperbolic
in the minimal metric? 
\end{problem}

%
%




\end{document}